%% file: main-extremes.tex
\documentclass[pdflatex,sn-mathphys-ay]{sn-jnl}

\usepackage{graphicx}%
\usepackage{multirow}%
\usepackage{amsmath,amssymb,amsfonts}%
\usepackage{amsthm}%
\usepackage{mathrsfs}%
\usepackage[title]{appendix}%
\usepackage{xcolor}%
\usepackage{textcomp}%
\usepackage{manyfoot}%
\usepackage{booktabs}%
\usepackage{algorithm}%
\usepackage{algorithmicx}%
\usepackage{algpseudocode}%
\usepackage{listings}%

\usepackage{hyperref}
\let\orcidlogo\relax
\usepackage{orcidlink}
\hypersetup{hidelinks}
\usepackage[capitalise]{cleveref}
\usepackage[bb=dsserif]{mathalpha}

\theoremstyle{thmstyleone}%
\newtheorem{theorem}{Theorem}
\theoremstyle{thmstyletwo}%
\newtheorem{remark}{Remark}%

\theoremstyle{thmstylethree}%
\newtheorem{definition}{Definition}%

\newcommand{\R}{\mathbb{R}}
\newcommand{\bigO}{\mathcal{O}}

\newtheorem{lemma}{Lemma}
\newtheorem{corollary}{Corollary}
\newcommand{\1}{ \mathbb{1} }
\renewcommand{\P}{ \mathbb{P} }

\newcommand{\E}{ \mathbb{E} }

\begin{document}

\title[Large and Moderate Deviations for Conservative Tail-Index Estimation]{Large and Moderate Deviations for Conservative Tail-Index Estimation}


\author*[1]{\fnm{Martijn} \sur{Gösgens}\,\orcid{https://orcid.org/0000-0002-7197-7682}}\email{research@martijngosgens.nl}

\author[2]{\fnm{Bart P.G.} \sur{van Parys}\,\orcid{https://orcid.org/0000-0003-4177-4849}}\email{bart.van.parys@cwi.nl}

\author[2,3]{\fnm{Bert} \sur{Zwart}\,\orcid{https://orcid.org/0000-0001-9336-0096}}\email{bert.zwart@cwi.nl}

\affil[1]{University of Twente, Enschede, The Netherlands}
\affil[2]{CWI, Amsterdam, The Netherlands}
\affil[3]{TU Eindhoven, Eindhoven, The Netherlands}

\abstract{
\input{abstract.tex}
}

\keywords{Large deviations, Extreme value theory, Regular variation, Hill estimator}


\pacs[MSC Classification]{
62G32
60F10
62G20
62G05
62G30
60G70
}

\maketitle

\input{body.tex}

\backmatter

\section*{Statements and declarations}

\bmhead{Funding}

Bart P.G.\ van Parys gratefully acknowledges funding from NWO Vidi grant VI.Vidi.243.021.
The work of Martijn Gösgens was supported by the Netherlands Organisation for Scientific Research (NWO) through the Gravitation NETWORKS grant no. 024.002.003.

\bmhead{AI statement}

The authors used OpenAI's ChatGPT and Anthropic's Claude for exploratory calculations and copy-editing. In particular, ChatGPT assisted exploratory work leading to the proof of \cref{thm:hill-bias-corrected}. The authors independently verified and rewrote all arguments and take full responsibility for the final manuscript.

\bmhead{Competing interests}

The authors declare no competing interests.

\begin{appendices}
\renewcommand{\theHequation}{\thesection.\arabic{equation}}

\input{appendix.tex}
\end{appendices}


\makeatletter\immediate\write\@auxout{\string\citation{snsettings}}\makeatother
\bibliography{main}

\end{document}

%% file: abstract.tex
    To design systems that are protected against events much rarer than the observational record, extreme-value methods are needed to extrapolate distribution tails. Tail-index estimators such as the Hill estimator are central to this extrapolation, but overestimating the tail exponent can lead to substantial underestimation of rare-event probabilities. Motivated by this, we derive large- and moderate-deviation asymptotics for the Hill estimator and use them to construct estimators whose probability of exceeding the true tail index decays at a controlled exponential rate (the \emph{decay rate}). 
    In the large-deviations regime, we show that a simple rescaled version of the Hill estimator achieves an optimal balance between bias and decay rate among scale-invariant estimators based on the same top $k$ order statistics.
    Under a second-order condition, we quantify the effect of the Hill bias, analyze a bias-corrected estimator, and identify sufficient conditions for moderate deviations in the boundary case where the second-order parameter $\rho$ equals zero.

%% file: body.tex
\section{Introduction}
Dams, dikes, and insurance reserves are designed for floods and losses of sizes that may not have been observed before. Extreme value theory makes this possible by extrapolating the tail of a distribution beyond the range of the data~\citep{haan2006extreme,beirlant2006statistics,Embrechts2013ExtremeValueTheoryFinanceInsurance}. For distributions with regularly varying tails, the tail probability $\P(X>x)$ decays as $x^{-\alpha}$ up to a slowly varying factor, and the $(1-\delta)$-quantile grows as $\delta^{-1/\alpha}$. The \emph{tail index} $\alpha>0$ thus determines the extrapolation to leading order and needs to be estimated from data. The most common estimator for $\alpha$ is the Hill estimator~\citep{hill1975simple}, which estimates $\alpha$ using the $k$ largest observations of $n$ i.i.d.\ data points.


The statistical properties of the Hill estimator have been studied extensively, including its consistency~\citep{mason1982laws} and asymptotic normality under suitable regularity conditions~\citep{haan1998asymptotic}. In this paper, we investigate large and moderate deviations properties, which are not as well understood. Our motivation to study such properties is that errors in the tail index can have a large effect on extrapolated probabilities. If an estimator for $\alpha$ returns $(1+\theta)\alpha$, then the extrapolated probability of an event of size $x$ changes from $x^{-\alpha}$ to $x^{-(1+\theta)\alpha}$, so that the estimate is off by a factor $x^{\theta\alpha}$, which grows unboundedly; see~\cite{Smith_Weissman_1987} for an early warning on such extrapolation errors. The two directions of error are therefore not equally harmful. If we overestimate $\alpha$, then we underestimate the probability of catastrophically large observations by orders of magnitude, and a system designed on the basis of this estimate is not as resilient as intended. If we underestimate $\alpha$, then we overestimate the probability of catastrophes, and the system is conservative but safe. This motivates estimators for which the probability of overestimating $\alpha$ decays exponentially, at a rate that we call the \emph{decay rate} of the estimator (see~\cref{def:conservative}). To find out which decay rates are achievable, and at what bias, we need the large and moderate deviations of tail index estimators.

We derive the large and moderate deviations of the Hill estimator for regularly varying distributions and use them to construct estimators with a given decay rate. We show that a rescaled Hill estimator achieves a decay rate that no scale-invariant estimator based on the same order statistics can improve without increasing its bias, so in this context, the Hill estimator is optimal. In the moderate deviations regime, we use a classical second-order condition to identify which deviation sizes $\theta_k$ are admissible. 

\subsection{Background and preliminaries}
We write $g\in\mathcal{RV}_\gamma$ and say that the function $g$ is \emph{regularly varying} with exponent $\gamma$ whenever
\[
\frac{g(tx)}{g(t)}\longrightarrow x^\gamma,
\]
for every $x>0$ as $t\to\infty$.
Let $X,X_1,\ldots,X_n$ be independent copies of a positive random variable with distribution function $F$. We denote the order statistics of $X_1,\dots,X_n$ (in decreasing order) by
\[
X_{1,n}\ge X_{2,n}\ge\cdots\ge X_{n,n}
\]
and abbreviate $X_{1:k,n}:=(X_{1,n},\ldots,X_{k,n})$. For $t>1$, the tail quantile function is
\[
U(t):=\inf\{u\ : F(u)\ge 1-1/t\}.
\]
The tail quantile function $U$ is nondecreasing and left-continuous.
Let $\mathcal{RV}'_\gamma$ denote the set of positive, non-decreasing, left-continuous regularly varying functions with exponent $\gamma$.
Throughout, $\text{Exp}(\lambda)$ denotes the exponential distribution with rate parameter $\lambda$.
We say that the distribution of $X$ is regularly varying with exponent $\alpha>0$ if $1-F\in\mathcal{RV}_{-\alpha}$, or equivalently~\citep{haan2006extreme,nair_fundamentals_2022}, $U\in\mathcal{RV}'_{1/\alpha}$. We write $X\sim \text{RV}(U)$ to denote that  $X\stackrel d=U(e^E)$ for $E\sim\text{Exp}(1)$.
The class of regularly varying distributions forms the Maximum Domain of Attraction of the Fréchet distribution. That is~\citep{Haan1970regular}, $U\in\mathcal{RV}'_{1/\alpha}$ if and only if for all $x>0$,
\[
\lim_{n\to\infty}\P(X_{1,n}/U(n)\le x)= e^{-x^{-\alpha}}.
\]

All asymptotic statements are understood along an arbitrary intermediate sequence $k=k_n$ satisfying
\[
k_n\longrightarrow\infty,\qquad \frac{k_n}{n}\longrightarrow0.
\]
We retain the shorthand $n\gg k\gg1$ for this regime. Sequences denoted by $\theta_k$, $A(n/k)$, and similar expressions are evaluated along the same sequence $k=k_n$.

Whenever a reciprocal estimator $c/T$ is used below, we adopt the convention $c/T:=+\infty$ on $\{T\le0\}$. With this convention, reciprocal inequalities remain valid without a separate positivity event.

\begin{definition}\label{def:rescaled-hill}
    For $\theta\in[0,1)$, define the \emph{rescaled Hill estimator}
    \[
    \hat\alpha^{(\theta)}_{k,n}:=\frac{1-\theta}{H_{k,n}},
    \qquad
    H_{k,n}:=\frac1k\sum_{i=1}^{k}\log\left(\frac{X_{i,n}}{X_{k,n}}\right).
    \]
\end{definition}
For $\theta=0$, this is the classical Hill estimator~\citep{hill1975simple}, which is the most widely used tail index estimator.
The Hill estimator is consistent~\citep{haan2006extreme}, i.e. $\hat\alpha^{(0)}_{k,n}\stackrel\P\to\alpha$.

More precise asymptotic statements are available under the second-order condition~\citep{haan1998asymptotic}:
\begin{definition}\label{def:soc}
    Consider an auxiliary function $A\in\mathcal{RV}_\rho$ for some \emph{second-order parameter} $\rho\le0$, with $A(t)\to0$ and $A(t)\neq0$ for sufficiently large $t$.
    We write $U\in2\mathcal{RV}_{1/\alpha,A}$ and say that $U$ satisfies the \emph{second-order condition} with rate $A(t)$ if
    \begin{equation*}\label{eq:soc}\tag{SOC}
    \lim_{t\to\infty}\frac{U(tx)/U(t)-x^{1/\alpha}}{A(t)}
    =x^{1/\alpha}\int_1^x u^{\rho-1}\,du,
    \end{equation*}
    for all $x>0$.
\end{definition}
Theorem~3.2.5 of~\citet{haan2006extreme} shows that if~\eqref{eq:soc} holds and additionally $\sqrt kA(n/k)=\bigO(1)$, then
\begin{equation}\label{eq:hill-normality}
\sqrt k\left(\alpha H_{k,n}-1-\frac{\alpha A(n/k)}{1-\rho}\right)\stackrel d\to N(0,1).
\end{equation}
Notice that if $A(n/k)<0$ and $\sqrt k|A(n/k)|$ is large, then $\P(\hat\alpha^{(0)}_{k,n}=1/H_{k,n}>\alpha)\approx1$, so that the Hill estimator is likely to overestimate the tail index.
As discussed above, overestimating the tail index leads to underestimating the probability of extreme events by orders of magnitude. This motivates the following definition:
\begin{definition}\label{def:conservative}
    Let $\lambda_n\to\infty$. We call an estimator $\hat\alpha_n:\mathbb{R}^n\to[0,\infty]$ \emph{conservative with decay rate at least $\lambda_n$} if, for any $\alpha>0, U\in\mathcal{RV}'_{1/\alpha}$ and $X_1,\dots,X_n\sim\text{RV}(U)$,
    \begin{equation}\label{eq:conservative-single}
    \liminf_{n\to\infty}\frac{-1}{\lambda_n}\log\P(\hat\alpha_n(X_1,\dots,X_n)>\alpha)\ge1.
    \end{equation}
    If the limit exists and equals $1$ for every regularly varying distribution, we say that the estimator \emph{achieves} the decay rate $\lambda_n$.
\end{definition}
For the estimators that we study, the asymptotics of $-\log\P(\hat\alpha>\alpha)$ actually does not depend on the particular $\alpha$ or regularly varying distribution, which is why we define the decay rate in terms of this exponential rate rather than directly studying the asymptotics of $\P(\hat\alpha>\alpha)$, which is much more sensitive to the particular regularly varying distribution at hand.

The convergence in~\cref{def:conservative} is \emph{pointwise} in $U\in\mathcal{RV}'_{1/\alpha}$. If we were to require \emph{uniform} convergence over $U\in\mathcal{RV}'_{1/\alpha}$, then this could only be achieved by pathological tail estimators, as we show below. Consider an arbitrary quantile function $U'$ and a threshold $0\le t_0<\infty$. We construct the quantile function
\[
U_{t_0}(t)=\begin{cases}
U'(t)&\text{ if }t<t_0,\\
U'(t_0)\cdot(t/t_0)^{1/\alpha}&\text{ if }t\ge t_0.
\end{cases}
\]
Notice that $U_{t_0}(t)\in\mathcal{RV}'_{1/\alpha}$ regardless of $U',t_0$. Recall that $X_i\stackrel d=U(e^{E_i})$ for independent $E_i\sim\text{Exp}(1).$ The probability that there is some $i\in[n]$ with $e^{E_i}>t_0$ is at most $n/t_0$, so that $X_i=U'(e^{E_i})$ holds for all $i\in[n]$ with probability at least $1-n/t_0$. Hence, for fixed $n$ and $t_0\to\infty$, 
\begin{equation}\label{eq:conservative-uniform}
\lim_{t_0\to\infty}\P(\hat\alpha_n(X_1,\dots,X_n)>\alpha)=\P(\hat\alpha_n(U'(e^{E_1}),\dots,U'(e^{E_n}))>\alpha).
\end{equation}
If $\hat\alpha_n$ would satisfy a uniform variant of~\cref{def:conservative}, then the right-hand-side of~\eqref{eq:conservative-uniform} would have to vanish as $n\to\infty$. By picking $\alpha>0$ arbitrarily small, we conclude $\hat\alpha_n(X_1',\dots,X_n')\stackrel\P\to0$ for $X_1',\dots,X_n'\sim\text{RV}(U')$ and arbitrary $U'$. Hence, a uniform variant of~\cref{def:conservative} could only hold for estimators $\hat\alpha_n$ with $\hat\alpha_n\stackrel\P\to0$ regardless of the distribution.

\subsection{Main results}\label{sec:main-results}
Our first main result concerns the large and moderate deviations of the Hill estimator: for $\theta_k<1$ and with some additional conditions (given in \cref{thm:main-hill}), we prove
\begin{equation}\label{eq:hill-deviations}
-\log\P(\alpha H_{k,n}<1-\theta_k)\sim kf(\theta_k),\quad -\log\P(\alpha H_{k,n}>1+\theta_k)\sim kf(-\theta_k),
\end{equation}
for the rate function
\begin{equation}\label{eq:rate-function}
f(\theta):=-\theta-\log(1-\theta).
\end{equation}
Notice that $f(\theta)\ge0$, with equality only at $\theta=0$, and $f(\pm \theta)\sim \theta^2/2$ as $\theta\downarrow0$. The reason that we define $f(\theta)$ to match the lower tail in~\eqref{eq:hill-deviations} is that the lower tail is particularly relevant for conservative estimation of the tail index, as will become clear later.

For $\theta_k=\theta$ constant, \eqref{eq:hill-deviations} gives the large deviations of the Hill estimator (see~\cref{cor:hill-ld}). This extends a result by \citet{shihong1992large}, who derived the large deviations of the Hill estimator under the assumption that the distribution function $F$ is continuous. Our proof is simpler and shows that this continuity assumption is not necessary.

For the moderate deviations regime, we rely on the second-order condition~\eqref{eq:soc}. This is a natural assumption, since moderate deviations lie between the large deviations and CLT behavior, and the CLT behavior requires the second-order condition~\citep{haan1998asymptotic}.
\eqref{eq:soc} comes with a second-order parameter $\rho\le0$ and an auxiliary function $A\in\mathcal{RV}_\rho$.
We distinguish the case $\rho<0$ from the case $\rho=0$, since the latter case is more involved.
For $\rho<0$, we show that~\eqref{eq:hill-deviations} holds for $\theta_k\gg |A(n/k)|+k^{-1/2}$.
For $\rho=0$, \cref{thm:hill-rho0} proves~\eqref{eq:hill-deviations} for $\theta_k\gg \sup_{t\ge n/k}|A(t)|^{1/(1+\kappa)}+k^{-1/2}$, for $\kappa>0$.

To also obtain moderate deviations for $\theta_k\asymp |A(n/k)|$ and $\rho<0$, we need to take the sign of $A(n/k)$ into account. \cref{thm:hill-bias-corrected} proves the moderate deviations regime of a bias-corrected Hill estimator $\tilde H_{k,n}=H_{k,n}-\frac{A(n/k)}{1-\rho}$, which can be translated to moderate deviations of $H_{k,n}$.
Our moderate deviations results are valid even when $\sqrt{k}|A(n/k)|\to\infty$, in contrast to the CLT behavior of the Hill estimator, which requires $\sqrt kA(n/k)=\bigO(1)$.

Our main motivation for studying the large and moderate deviations of the Hill estimator, is to construct tail-index estimators with a controlled decay rate. Since $\{\alpha H_{k,n}<1-\theta_k\}=\{\hat\alpha_{k,n}^{(\theta_k)}>\alpha\}$, condition \eqref{eq:hill-deviations} implies that the rescaled Hill estimator $\hat\alpha_{k,n}^{(\theta_k)}$ achieves the decay rate (\ref{eq:conservative-single}) with $\lambda_k=kf(\theta_k)$.

In the large deviations regime, we have $kf(\theta)=\Theta(k)$.
By the consistency of the Hill estimator, we have
\[
\hat\alpha_{k,n}^{(\theta)}\stackrel\P\to(1-\theta)\alpha.
\]
This shows that the $\Theta(k)$ decay rate of $\hat\alpha_{k,n}^{(\theta)}$ comes at the cost of a non-vanishing bias. This raises the question whether it is possible to have an estimator that is strictly less biased than the rescaled Hill estimator while achieving the same decay rate.
\cref{thm:hill-optimality} shows that among \emph{scale-invariant estimators} that make use of the top $k$ order statistics, this is not possible. That is, for any estimator $\hat\alpha_{k,n}:[0,\infty)^k\to[0,\infty]$ for which $\hat\alpha_{k,n}(cX_{1:k,n})=\hat\alpha_{k,n}(X_{1:k,n})$ for all $c>0$, if $\liminf\P(\hat\alpha_{k,n}>(1-\theta)\alpha)>0$ for some normalized regularly varying distribution (see~\cref{def:nrv}), then its decay rate is at most $kf(\theta)+o(k)$.
However, \cref{lem:non-scale-invariant} shows that outside the class of scale-invariant estimators, the estimator $(1-\varepsilon)\log(n/k)/\log X_{k,n}$ with $\varepsilon<\theta$ has a smaller bias than $\hat\alpha^{(\theta)}$, but decay rate $\lambda_k\gg k$. Most of the estimators that are used in practice are scale-invariant, since otherwise the estimate would depend on the unit of measurement, i.e., whether losses are expressed in dollars or millions of dollars.
In \cref{thm:pickands}, we show that a rescaled version of the (scale-invariant) Pickands estimator~\citep{pickands1975statistical} has a strictly lower decay rate at the same bias level, when making use of the same number of order statistics.
\cref{thm:hill-optimality} also highlights that a consistent scale-invariant estimator for $\alpha$ must have decay rate $o(k)$, which indeed matches the moderate deviations regime $\theta_k\downarrow0$.

In this work, we consider a given sequence $k_n$ and leave the separate question of how $k_n$ should be chosen for future work. From \eqref{eq:hill-deviations}, it is already clear that if we want to achieve a given decay rate $\lambda_n$, then we need $k_n=\Omega(\lambda_n)$. To obtain a consistent estimator, we even need $k_n\gg \lambda_n$. If additionally, we know what bias level $\theta_n$ is tolerable, then a natural choice would be $k_n\approx \lambda_n/f(\theta_n)$. In particular, for $\theta_n\downarrow0$, we have $k_n\sim 2\lambda_n/\theta_n^2$.

\subsection{Related work}
\paragraph{The Hill estimator and large deviations.}
The Hill estimator was introduced in~\cite{hill1975simple} and its consistency for $1\ll k\ll n$ was proven in~\citet{mason1982laws}. \citet{deheuvels1988strong} proves almost sure convergence for $\log\log n\ll k\ll n$.
The asymptotic normality was established in~\cite{haan1998asymptotic} under the \eqref{eq:soc} assumption.
\citet{haan1998comparison} compare the Hill estimator against several other estimators, including Pickands'~\citep{pickands1975statistical}, in terms of asymptotic mean squared error, and find that the MSE-optimal $k$ differs across estimators.
\citet{Lu_Peng_2002} build likelihood-based confidence intervals for $\alpha$ around the Hill estimator. Their results concern fixed confidence levels, i.e., the typical fluctuations of the estimator, rather than the exponentially small error probabilities that we are interested in.

\citet{shihong1992large} proves the exact large-deviation asymptotics of the Hill estimator under the assumption that $F$ is continuous. \cref{cor:hill-ld} extends this result to the case where $F$ is not necessarily continuous.
\citet{viharos2008large} derives large-deviation probabilities for a linear class of tail index estimators, and shows that the Hill estimator achieves the optimal large-deviations rate of convergence for Pareto-distributed data. Our optimality result from \cref{thm:hill-optimality} is more general since it applies to arbitrary \emph{scale-invariant} functions of the $k$ largest order statistics and applies to regularly varying distributions instead of just Pareto.

\paragraph{Bias correction.}
In a separate line of work, the bias of the Hill estimator is reduced by estimating the second-order parameter $\rho$ and the second-order rate $A(n/k)$.
\citet{Ivette_Gomes_Pestana_2007} present methods to perform such a bias-correction in the Hall-class~\citep{hall1982simple}  (i.e., \eqref{eq:soc} with $\rho<0$ and $A(t)\sim (\beta/\alpha)t^\rho$ for some constant $\beta$) by introducing consistent estimators for $\rho$ and $\beta$. They prove asymptotic normality of their bias-corrected Hill estimator and discuss how to choose $k_n$ to minimize the asymptotic mean squared error.
\citet{Caeiro_Gomes_Rodrigues_2009} extend this estimator to a third-order condition and derive the asymptotic distribution of this estimator.
In \cref{thm:hill-bias-corrected}, we derive the moderate deviations of the Hill estimator minus the true value of the bias term, without providing methods for estimating this bias term from the data. Deriving the moderate deviations of the estimators from \citet{Ivette_Gomes_Pestana_2007} or \citet{Caeiro_Gomes_Rodrigues_2009} is significantly more challenging.

\paragraph{Extreme quantile extrapolation.} Once a tail index has been estimated, the standard route to an extrapolated extreme quantile combines an intermediate empirical quantile $X_{k,n}$ with the tail-index estimate $\hat\alpha_{k,n}$, an idea originating from \citet{Weissman_1978}.
\citet{Allouche_ElMethni_Girard_2023} show that in the second order framework, it is possible to reduce the bias of such a quantile estimator by picking different values $k$ for these two estimators.
\citet{Danielsson_Ergun_Haan_Vries_2025} instead select the number of order statistics by minimizing the discrepancy between the fitted Pareto-type tail and the empirical quantiles. These works optimize the accuracy of the resulting extrapolation, while our results instead control the probability that the tail estimate is too optimistic via the decay rate.

\paragraph{Robust estimation.}
A separate line of work protects decisions against model uncertainty by optimizing over an ambiguity set of plausible distributions rather than basing decisions on a single distribution~\citep{Ben-Tal_Nemirovski_2009,Bertsimas_Gupta_Kallus_2018}. 
Within extreme value theory, \citet{Blanchet_He_Murthy_2020,Birghila_Aigner_Engelke_2025} study distributionally robust estimation of tail probabilities. That is, they consider the worst-case tail probability among an ambiguity set $\sup\{\P(X>x)\ :\ d(\P,\hat\P)\le\delta\}$, where $d$ is a divergence measure and $\hat\P$ is a reference distribution.
 \citet{Blanchet_He_Murthy_2020} shows that for Rényi divergence and $\hat\P$ regularly varying with exponent $\alpha_{\mathrm{ref}}$, the worst-case tail is again regularly varying, but with index $\alpha^*=\frac{\eta-1}\eta\alpha_{\mathrm{ref}}$ for Rényi parameter $\eta>1$, while the Kullback--Leibler case ($\eta=1$) leads to tails that are heavier than any regularly varying distribution. 
Most similar to the task of estimating the tail index with a controlled decay rate is the work of \citet{Parys_Zwart_2025}, who study robust estimation of the mean of a heavy-tailed random variable. They prove that for a Kullback--Leibler ambiguity-set estimator, the probability of overestimating the true mean decays at an exponential rate (analogous to our decay rate) and minimizes bias among all estimators achieving the same guarantee. 

\subsection{Organization of this paper}
\cref{sec:coupling-concentration} proves~\cref{thm:main-hill}, our main technical
result, using a coupling and concentration argument. 
\cref{sec:ld} specializes~\cref{thm:main-hill} to the large deviations regime. It derives the decay rate of the rescaled Hill estimator and shows that no scale-invariant estimator using the same
order statistics improves on it, while an estimator outside this class does.
\cref{apx:pickands} analyzes the rescaled Pickands estimator and shows that its
decay rate is strictly smaller.
\cref{sec:md} derives the moderate deviations of the Hill estimator and a bias-corrected variant under second-order regular variation, and discusses the trade-off between consistency and decay rate of the corresponding rescaled Hill estimators.

\section{Coupling and concentration}\label{sec:coupling-concentration}
In this section, we prove~\cref{thm:main-hill}, which gives the large and moderate deviations of the Hill estimator. 
We rewrite the Potter bounds in the following way:
\begin{lemma}\label{lem:potter-rewritten}
    Let $U\in\mathcal{RV}_{1/\alpha}$ and define the \emph{Potter rate} $\varepsilon(t)$ as
    \[
    \varepsilon(t)=\inf\left\{\varepsilon>0\ :\ \forall x\ge y\ge t:\ (1-\varepsilon)\left(\frac{x}{y}\right)^{\frac{1-\varepsilon}\alpha}\le\frac{U(x)}{U(y)}\le(1+\varepsilon)\left(\frac{x}{y}\right)^{\frac{1+\varepsilon}\alpha}\right\}.
    \]
    Then $\lim_{t\to\infty}\varepsilon(t)=0$.
\end{lemma}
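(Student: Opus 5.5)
The plan is to derive the statement from the classical Potter bounds (e.g.\ Proposition B.1.9 in \citet{haan2006extreme}). For $U\in\mathcal{RV}_{1/\alpha}$ they state: for every $\delta>0$ there is $t_\delta$ such that for all $x,y\ge t_\delta$,
\[
(1-\delta)\min\left\{(x/y)^{1/\alpha-\delta},(x/y)^{1/\alpha+\delta}\right\}\le\frac{U(x)}{U(y)}\le(1+\delta)\max\left\{(x/y)^{1/\alpha-\delta},(x/y)^{1/\alpha+\delta}\right\}.
\]
Restricting to $x\ge y$ gives $x/y\ge1$. Then the minimum is attained at exponent $1/\alpha-\delta$ and the maximum at $1/\alpha+\delta$, so
\[
(1-\delta)(x/y)^{1/\alpha-\delta}\le U(x)/U(y)\le(1+\delta)(x/y)^{1/\alpha+\delta}.
\]

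The next step is to match this with the form in the lemma. Given a target $\varepsilon\in(0,1)$, I take $\delta=\min\{\varepsilon,\varepsilon/\alpha\}$. Then $1/\alpha-\delta\ge(1-\varepsilon)/\alpha$ and $1/\alpha+\delta\le(1+\varepsilon)/\alpha$. Since $x/y\ge1$, raising it to a smaller exponent gives a smaller value, and $1\pm\delta$ lies between $1-\varepsilon$ and $1+\varepsilon$. Hence for all $x\ge y\ge t_\delta$,
\[
(1-\varepsilon)(x/y)^{(1-\varepsilon)/\alpha}\le U(x)/U(y)\le(1+\varepsilon)(x/y)^{(1+\varepsilon)/\alpha}.
\]
So $\varepsilon$ belongs to the set in the definition of $\varepsilon(t)$ for every $t\ge t_\delta$, and therefore $\varepsilon(t)\le\varepsilon$ there.

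Two bookkeeping points remain. First, the condition "for all $x\ge y\ge t$" only gets weaker as $t$ grows, so the admissible set increases with $t$ and $\varepsilon(t)$ is non-increasing. Once the admissible set is nonempty, $\varepsilon(t)$ is finite, and $\varepsilon(t)\ge0$ always. Second, $\varepsilon\in(0,1)$ was arbitrary, so $\limsup_{t\to\infty}\varepsilon(t)\le\varepsilon$ for every such $\varepsilon$, which gives $\varepsilon(t)\to0$.

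The only real point to check is that the Potter bounds apply. They hold for any measurable regularly varying $U$, via the uniform convergence theorem, and this covers $U$ in the lemma since it is monotone. No real obstacle is expected beyond converting the additive exponent perturbation $\delta$ into the multiplicative form $(1\pm\varepsilon)/\alpha$ used in the definition of $\varepsilon(t)$, which the choice $\delta=\min\{\varepsilon,\varepsilon/\alpha\}$ handles.
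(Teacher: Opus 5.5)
Your proof is correct and follows the same route as the paper: apply the Potter bounds to find, for each $\varepsilon$, a threshold beyond which $\varepsilon$ lies in the admissible set, so $\varepsilon(t)\le\varepsilon$ eventually, and then let $\varepsilon\downarrow0$. You also spell out two steps the paper's one-line proof leaves implicit: the restriction to $x\ge y$, and the conversion from the standard additive-exponent form via $\delta=\min\{\varepsilon,\varepsilon/\alpha\}$.
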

\begin{proof}
    Pick any $\varepsilon>0$. By the Potter bounds, there is some $t_0<\infty$ so that for all $x,y\ge t_0$, it holds that
    \[
    (1-\varepsilon)\left(\frac{x}{y}\right)^{\frac{1-\varepsilon}\alpha}\le\frac{U(x)}{U(y)}\le(1+\varepsilon)\left(\frac{x}{y}\right)^{\frac{1+\varepsilon}\alpha},
    \]
    which implies $\varepsilon(t)\le\varepsilon$ for all $t\ge t_0$. Since $\varepsilon$ was arbitrary, it follows that $\varepsilon(t)\to0.$
\end{proof}

Notice that from the definition of $\varepsilon(t)$, it follows that $\varepsilon(t)$ is non-increasing. That is, $t_1\le t_2$ implies $\varepsilon(t_1)\ge\varepsilon(t_2)$.
The main result of this section is the following theorem, which shows that deviations of size $\theta_k$ satisfying $\theta_k\gg\varepsilon(n/(\eta k))$ for some $\eta>1$ occur at rate $kf(\theta_k)$. In~\cref{sec:ld,sec:md}, we will describe the large and moderate deviations that are implied by this theorem.
\begin{theorem}\label{thm:main-hill}
    Consider a sequence $\theta_k=\bigO(1)$ such that $\theta_k\gg\varepsilon(n/(\eta k))+k^{-1/2}$ for some $\eta>1$. Then
    \[
    -\log\P(\alpha H_{k,n}>1+\theta_k)\sim kf(-\theta_k),
    \]
    as $n\gg k\gg 1$, where the rate function is given by 
    \[f(\theta)=-\theta-\log(1-\theta)\ge0.\]
    Moreover, if additionally $\limsup_{k\to\infty}\theta_k<1$, then
    \[
    -\log\P(\alpha H_{k,n}<1-\theta_k)\sim kf(\theta_k).
    \]
\end{theorem}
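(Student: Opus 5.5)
The plan is to represent the sample through exponentials and compare the Hill statistic with an exact sum of i.i.d.\ standard exponentials. Write $X_i=U(Y_i)$ with $Y_i=e^{E_i}$, so that $X_{i,n}=U(Y_{i,n})$, with $Y_{1,n}\ge\dots\ge Y_{n,n}$ the order statistics of standard Pareto variables. By the Rényi representation, conditionally on $Y_{k+1,n}$, the ratios $Y_{i,n}/Y_{k+1,n}$, $i\le k$, are distributed as the order statistics of $k$ i.i.d.\ standard Pareto variables. In particular
\[
S_k:=\frac1k\sum_{i=1}^k\log\frac{Y_{i,n}}{Y_{k,n}}
\]
is distributed as $\frac1k\sum_{j=1}^{k-1}E_j'$ for i.i.d.\ $E'_j\sim\text{Exp}(1)$, independent of $Y_{k,n}$. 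This is the pure Pareto case. Cramér's theorem (the log-mgf of $\text{Exp}(1)$ is $-\log(1-s)$, with Legendre transform $x-1-\log x$) gives $-\log\P(S_k<1-\theta)\sim kf(\theta)$ and $-\log\P(S_k>1+\theta)\sim kf(-\theta)$ for fixed $\theta$. For $\theta_k\to0$ with $\theta_k\gg k^{-1/2}$ the same asymptotics hold by moderate deviations for exponential sums; they can also be obtained directly, since $kS_k$ is Gamma$(k-1)$ and its density is explicit. Uniformity along $\theta_k$ is standard: use a Chernoff upper bound, and for the lower bound an exponential tilt combined with Chebyshev. The replacement of $k$ by $k-1$ is harmless because $kf(\theta_k)\gg1$.

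Next I will couple $\alpha H_{k,n}$ to $S_k$ on a good event. Let $G=\{Y_{k,n}\ge n/(\eta k)\}$. On $G$, \cref{lem:potter-rewritten} applied with $x=Y_{i,n}\ge y=Y_{k,n}\ge t:=n/(\eta k)$ gives
\[
(1-\varepsilon)S_k+\alpha\log(1-\varepsilon)\le\alpha H_{k,n}\le(1+\varepsilon)S_k+\alpha\log(1+\varepsilon),
\]
with $\varepsilon=\varepsilon(n/(\eta k))$, after taking logarithms and averaging (the $\log(1\pm\varepsilon)$ terms vanish for $i=k$, which only helps). Hence $|\alpha H_{k,n}-S_k|\le \varepsilon S_k+C\varepsilon$ on $G$. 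Because $\theta_k\gg\varepsilon$, the event $\{\alpha H_{k,n}<1-\theta_k\}\cap G$ is sandwiched between $\{S_k<1-\theta_k(1\pm o(1))\}\cap G$. For the upper tail, $S_k$ may be large, so I will sandwich $\{\alpha H_{k,n}>1+\theta_k\}$ by $\{S_k>(1+\theta_k-C\varepsilon)/(1+\varepsilon)\}$ and its analogue. Since $f(\pm\theta_k(1+o(1)))\sim f(\pm\theta_k)$ (continuity for $\theta_k$ bounded away from $0$, and quadratic behaviour near $0$), both tails of $S_k$ at the perturbed levels give $kf(\pm\theta_k)(1+o(1))$. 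Independence of $S_k$ and $Y_{k,n}$ makes $\P(\{S_k\in\cdot\}\cap G)=\P(S_k\in\cdot)\P(G)$. For the lower tail, $\limsup\theta_k<1$ keeps the levels away from $0$, where $f$ blows up.

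It remains to control $G^c$. We have $\P(Y_{k,n}<n/(\eta k))=\P(\text{Bin}(n,\eta k/n)<k)\le e^{-c_\eta k}$ with $c_\eta=\eta-1-\log\eta>0$ by Chernoff. Then
\[
\P(\alpha H_{k,n}<1-\theta_k)\le\P(S_k<1-\theta_k(1-o(1)))+e^{-c_\eta k},
\]
and the lower bound is $\P(S_k<\cdot)\P(G)$ with $\P(G)\to1$. The error term is negligible exactly when $kf(\theta_k)\le(c_\eta-\delta)k$. This is the main obstacle: in the large deviations regime with $\theta_k$ not small, $e^{-c_\eta k}$ can dominate. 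I would first try to remove it by enlarging $\eta$: since $\varepsilon(\cdot)$ is non-increasing, $\varepsilon(n/(\eta'k))$ is not controlled for $\eta'>\eta$, so instead I will condition on $Y_{k,n}$ more carefully. On $\{Y_{k,n}\ge n/(Mk)\}$ I can still use \cref{lem:potter-rewritten} with the weaker rate $\varepsilon(n/(Mk))\to0$ whenever $\theta_k$ is bounded below, which is the only case where the issue arises. When $\theta_k\to0$, $kf(\theta_k)=o(k)$ and $e^{-c_\eta k}$ is negligible anyway. Choosing $M$ large enough that $c_M>\sup_k f(\pm\theta_k)+1$, which is possible because $\theta_k=\bigO(1)$ and $\limsup\theta_k<1$ bound $f$, should close the argument, splitting into subsequences where $\theta_k\to0$ or stays bounded away from $0$.
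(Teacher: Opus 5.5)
Your proposal is correct and follows essentially the same route as the paper. It reduces $\alpha H_{k,n}$ to $\Gamma_{k-1}/k$ by the Potter sandwich on $\{E_{k,n}\ge\log(n/k)-\log\eta\}$, bounds the complement by a binomial Chernoff estimate, and splits into subsequences so that $\eta$ is enlarged only where $\theta_k$ stays bounded away from zero; the paper instead splits at a fixed threshold $\delta$ with $f(\delta)<\eta-1-\log\eta$, which is equivalent. The only minor difference is that you get the uniform Gamma-tail lower bounds from an exponential tilt plus Chebyshev, whereas the paper combines Cram\'er's theorem, G\"artner--Ellis (\cref{cor:gamma-cramer-md}) and a convergent-subsequence argument; your version is slightly more direct and is uniform over any bounded $\theta_k$ with $k\theta_k^2\to\infty$.
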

The reason for the variable $\eta>1$ is technical, but will not be relevant for the main large and moderate deviations results in \cref{sec:ld,sec:md}.
We write $f(-\theta_k)$ for the upper tail and $f(\theta_k)$ for lower tail because the lower tail is particularly relevant for conservative estimation of the tail index, as we explain in~\cref{sec:ld}.
Notice that for $\theta_k\downarrow0$, we have $f(\pm\theta_k)\sim \tfrac12\theta_k^2$.

To prove \cref{thm:main-hill}, we will first couple the order statistics $X_{1:k,n}$ to i.i.d.\ standard exponentials $\Delta_{1:k}$ and introduce several lemmas.
If $X\sim\text{RV}(U)$, then $X\stackrel d=U(e^E)$ for $U\in\mathcal{RV}'_{1/\alpha}$ and $E\sim\text{Exp}(1)$.
Notice that $e^E\sim\text{Pareto}(1)$. However, since our proof technique relies on exponential spacings, it is more convenient to work with exponential rather than Pareto random variables.
If $X_{k,n}$ denotes the $k$-th largest among $n$ independent samples from the same distribution, then $X_{k,n}\stackrel d=U(e^{E_{k,n}})$, where $E_{k,n}$ is the $k$-th largest observation among $n$ independent standard exponentials. Since $E_{k,n}-E_{k+1,n}\sim\text{Exp}(k)$ and $E_{k,n}-E_{k+1,n}\perp E_{k-1,n}-E_{k,n}$, we can couple $E_{1:n,n}$ to $n$ i.i.d. standard exponentials $\Delta_{1},\dots,\Delta_n$ using the Rényi representation~\citep{renyi1953theory} of order statistics:
\[
E_{k,n}=\sum_{i=k}^n\frac{\Delta_i}{i}.
\]
Combining these two, we can couple the order statistics $X_{1:n,n}$ to $\Delta_{1:n}$ by
\begin{equation}\label{eq:exp-spacing-coupling}
X_{k,n}=U\left(\exp\left(\sum_{i=k}^n\frac{\Delta_i}{i}\right)\right).
\end{equation}
Note that $E_{k,n}=\sum_{i=k}^n\Delta_i/i$ is a function of $\Delta_k,\dots,\Delta_n$ only, and is therefore independent of $\Delta_1,\dots,\Delta_{k-1}$. In particular, the spacings $E_{i,n}-E_{k,n}=\sum_{j=i}^{k-1}\Delta_j/j$, $i\in[k-1]$, are independent of $E_{k,n}$. We will use this repeatedly to pass from probabilities conditional on events involving $E_{k,n}$ to unconditional probabilities of events involving $\Delta_{1:k-1}$.
We start by bounding the Hill estimator in terms of the sum of i.i.d.\ $\text{Exp}(1)$ random variables $\Delta_i$ and the Potter rate:

\begin{lemma}\label{lem:hill-bound}
    Under the event $\{E_{k,n}\ge b\}$, we have
    \[
    \alpha\log(1-\varepsilon)+\left(1-\varepsilon\right)\frac1k\sum_{i=1}^{k-1}\Delta_i\le \alpha H_{k,n}\le \alpha\log(1+\varepsilon)+\left(1+\varepsilon\right)\frac1k\sum_{i=1}^{k-1}\Delta_i,
    \]
    where $\varepsilon=\varepsilon(e^{b})$.
\end{lemma}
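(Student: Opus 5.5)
The plan is to rewrite each summand of the Hill estimator through the coupling \eqref{eq:exp-spacing-coupling} and then apply the Potter rate bounds of \cref{lem:potter-rewritten} term by term. For $i\in[k]$ we have $X_{i,n}=U(e^{E_{i,n}})$ with $E_{i,n}\ge E_{k,n}$. Setting $x=e^{E_{i,n}}$ and $y=e^{E_{k,n}}$ gives $x\ge y\ge e^{E_{k,n}}\ge e^b$ on the event $\{E_{k,n}\ge b\}$.

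Since $\varepsilon(\cdot)$ is non-increasing, $\varepsilon(e^{E_{k,n}})\le\varepsilon(e^b)=\varepsilon$. The Potter inequalities with this $\varepsilon$ therefore hold for the pair $(x,y)$. Strictly, the infimum in the definition may not be attained, so I would use $\varepsilon'\downarrow\varepsilon$ and pass to the limit at the end, since the bounds are continuous in $\varepsilon$. Taking logarithms yields
\[
\log(1-\varepsilon)+\frac{1-\varepsilon}{\alpha}\bigl(E_{i,n}-E_{k,n}\bigr)\le\log\frac{X_{i,n}}{X_{k,n}}\le\log(1+\varepsilon)+\frac{1+\varepsilon}{\alpha}\bigl(E_{i,n}-E_{k,n}\bigr).
\]
If $\varepsilon\ge1$, the lower bound reads $-\infty$ and holds trivially.

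Next I would sum these inequalities over $i=1,\dots,k$, divide by $k$, and multiply by $\alpha$. This requires computing $\sum_{i=1}^k(E_{i,n}-E_{k,n})$ from the Rényi representation. Since $E_{i,n}-E_{k,n}=\sum_{j=i}^{k-1}\Delta_j/j$, swapping the order of summation gives
\[
\sum_{i=1}^{k}\sum_{j=i}^{k-1}\frac{\Delta_j}{j}=\sum_{j=1}^{k-1}\frac{\Delta_j}{j}\cdot j=\sum_{j=1}^{k-1}\Delta_j .
\]

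The term $i=k$ needs separate care. It contributes $\log 1=0$ to $H_{k,n}$, while the per-term bounds I summed include $\log(1\pm\varepsilon)$ for that index as well. For the upper bound this is harmless, because $\log(1+\varepsilon)\ge0$ only loosens it. For the lower bound, $\log(1-\varepsilon)\le0$, so including it also only weakens the inequality. Summing $k$ copies of $\log(1\pm\varepsilon)$ and dividing by $k$ then gives exactly $\alpha\log(1\pm\varepsilon)$ in the stated bound.

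The only mildly delicate point is this bookkeeping of the $i=k$ term and the non-attainment of the infimum defining $\varepsilon(t)$. Beyond that, the argument is a direct computation.
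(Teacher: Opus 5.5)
Your proposal is correct and follows essentially the same route as the paper: apply the Potter-rate bounds of \cref{lem:potter-rewritten} with $\varepsilon=\varepsilon(e^b)$ to each pair $(e^{E_{i,n}},e^{E_{k,n}})$ on $\{E_{k,n}\ge b\}$, take logarithms, sum over $i\in[k]$, and use the identity $\sum_{i=1}^{k-1}(E_{i,n}-E_{k,n})=\sum_{j=1}^{k-1}\Delta_j$. Your extra care is harmless but not needed: the $i=k$ term is just the Potter inequality at $x=y$, and the infimum defining $\varepsilon(t)$ is attained because the defining inequalities are closed in $\varepsilon$.
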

\begin{proof}
    Write $y_i=E_{i,n}-E_{k,n}\ge0$ for $i\in[k]$ (so $y_k=0$). On $\{E_{k,n}\ge b\}$, for every $i\in[k]$ we have $x:=e^{E_{i,n}}\ge y:=e^{E_{k,n}}\ge e^{b}$, so \cref{lem:potter-rewritten} with $\varepsilon=\varepsilon(e^{b})$ applies to the pair $(x,y)$:
    \[
    (1-\varepsilon)\left(\frac xy\right)^{\frac{1-\varepsilon}\alpha}\le\frac{U(x)}{U(y)}\le(1+\varepsilon)\left(\frac xy\right)^{\frac{1+\varepsilon}\alpha}.
    \]
    Since $U(x)/U(y)=X_{i,n}/X_{k,n}$ and $x/y=e^{y_i}$, taking logs and multiplying by $\alpha$ gives
    \[
    \alpha\log(1-\varepsilon)+(1-\varepsilon)y_i\ \le\ \alpha\log\frac{X_{i,n}}{X_{k,n}}\ \le\ \alpha\log(1+\varepsilon)+(1+\varepsilon)y_i,
    \]
    for every $i\in[k]$. Summing over $i=1,\dots,k$ and dividing by $k$ yields
    \[
    \alpha\log(1-\varepsilon)+\frac{1-\varepsilon}k\sum_{i=1}^{k-1}y_i\ \le\ \alpha H_{k,n}\ \le\ \alpha\log(1+\varepsilon)+\frac{1+\varepsilon}k\sum_{i=1}^{k-1}y_i.
    \] 
    Since $y_i=\sum_{j=i}^{k-1}\Delta_j/j$, we have the identity
    \[
    \sum_{i=1}^{k-1}y_i=\sum_{i=1}^{k-1}\sum_{j=i}^{k-1}\frac{\Delta_j}j=\sum_{j=1}^{k-1}\frac{\Delta_j}j\sum_{i=1}^{j}1=\sum_{j=1}^{k-1}\Delta_j.
    \]
    This completes the proof.
\end{proof}

We will condition on the event $\{E_{k,n}\ge\log(n/k)-\log\eta\}$ for suitable $\eta>1$, so that we can apply the bounds from \cref{lem:potter-rewritten} with $\varepsilon(n/(\eta k))$.

\begin{lemma}\label{lem:intermediate-ld}
Let $\lambda_k=\bigO(k)$ and $\eta>1$ be such that $\limsup_{k\to\infty}\lambda_k/k<\eta-1-\log\eta$. Then
    \[
    \liminf_{n\gg k\gg 1}\frac{-1}{\lambda_k}\log\P\left(E_{k,n}<\log(n/k)-\log\eta\right)>1.
    \]
    In particular, for any $\lambda_k=\bigO(k)$ there exists some $\eta>1$ for which the above holds, and for $\lambda_k=o(k)$ it holds for \emph{any} $\eta>1$.
\end{lemma}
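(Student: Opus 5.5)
We will rewrite the event $\{E_{k,n}<\log(n/k)-\log\eta\}$ as a statement about a binomial count and then apply a Chernoff bound. Set $s_n:=\log(n/k)-\log\eta$. The $k$-th largest of $n$ i.i.d.\ standard exponentials lies below $s_n$ exactly when fewer than $k$ of the $n$ samples exceed $s_n$. Hence
\[
\P(E_{k,n}<s_n)=\P(N_n\le k-1),\qquad N_n\sim\mathrm{Bin}(n,p_n),\quad p_n=e^{-s_n}=\frac{\eta k}{n}.
\]
The mean of $N_n$ is $np_n=\eta k$. Since $\eta>1$, the event asks for a count well below its mean.

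For this lower tail we use the standard Chernoff bound: for $m\le np$,
\[
\P(\mathrm{Bin}(n,p)\le m)\le \exp\!\big(-n\,D(m/n\,\|\,p)\big),
\]
where $D$ is the Bernoulli relative entropy. We could equally take the Poisson-type bound $\P(N\le m)\le \exp\!\big(-(\mu-m)+m\log(m/\mu)\big)$ with $\mu=np$, which also holds for binomials because $\log\E e^{-sN}\le \mu(e^{-s}-1)$. The second form is the easier one to use. Put $m=k$ and $\mu=\eta k$. The exponent becomes $-(\eta k-k)+k\log(1/\eta)=-k(\eta-1-\log\eta)$. This gives, for every $n$ and $k$,
\[
-\log\P(E_{k,n}<s_n)\ \ge\ k(\eta-1-\log\eta).
\]
This bound holds exactly and needs no asymptotics, so the regime $n\gg k$ enters only through the requirement $p_n\le1$, i.e.\ $\eta k\le n$. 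That requirement holds eventually because $k/n\to0$.

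Next we divide by $\lambda_k$. Write $c:=\eta-1-\log\eta>0$ and $L:=\limsup_k\lambda_k/k$, so that $L<c$ by hypothesis. Then
\[
\liminf \frac{-1}{\lambda_k}\log\P\ \ge\ \liminf\frac{kc}{\lambda_k}=\frac{c}{L}>1.
\]
If $L=0$, the left-hand side is $+\infty$. The only degenerate case is $\lambda_k\le0$ for some $k$; this is implicitly excluded, and if needed we restrict to $\lambda_k>0$.

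It remains to prove the two ``in particular'' claims. The map $g(\eta)=\eta-1-\log\eta$ is continuous and increasing on $(1,\infty)$, with $g(1)=0$ and $g(\eta)\to\infty$. If $\lambda_k=\bigO(k)$, then $L<\infty$, and we can choose $\eta$ large enough that $g(\eta)>L$. If $\lambda_k=o(k)$, then $L=0<g(\eta)$ for every $\eta>1$.

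There is no substantial obstacle. The one step that needs care is the justification of the Chernoff/Poisson lower-tail bound for the binomial, in particular that it applies with $m=k-1\le k\le\mu$. Bounding $\P(N\le k-1)$ by $\P(N\le k)$ handles the off-by-one.
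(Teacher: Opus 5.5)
Your proof is correct and follows the paper's route: you rewrite $\{E_{k,n}<\log(n/k)-\log\eta\}$ as a lower binomial tail with mean $\eta k$ and apply a Chernoff bound. The only difference is that you use the Poisson-type MGF bound from $1+x\le e^x$. This gives the clean non-asymptotic estimate $-\log\P(E_{k,n}<\log(n/k)-\log\eta)\ge k(\eta-1-\log\eta)$ for every $n\ge\eta k$. The paper instead uses the slightly sharper divergence form $nD(k/n\|\eta k/n)=k(\eta-1-\log\eta+\bigO(k/n))$ and passes to the limit.

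One slip to fix is a sign error. Optimizing at $e^{-s}=m/\mu$ gives $\P(N\le m)\le\exp(-(\mu-m)+m\log(\mu/m))$, not $m\log(m/\mu)$. Also, your substitution $-(\eta k-k)+k\log(1/\eta)$ actually equals $-k(\eta-1+\log\eta)$. These two errors cancel, and the corrected formula gives exactly the exponent $-k(\eta-1-\log\eta)$ that you use afterwards.
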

\begin{proof}
    Notice that the event $\{E_{k,n}<\log(n/k)-\log\eta\}$ means that fewer than $k$ out of $n$ independent exponentials have value at least $\log(n/k)-\log\eta$. This allows us to rewrite this probability to a binomial tail and apply the Chernoff--Hoeffding inequality using $\P(E\ge\log(n/k)-\log\eta)=\eta k/n> k/n$:
    \[
    \P\left(E_{k,n}<\log(n/k)-\log\eta\right)=\P(\text{Bin}(n,\eta k/n)<k)\le e^{-nD(k/n\|\eta k/n)},
    \]
    where $D(x\|y)=x\log\tfrac{x}{y}+(1-x)\log\tfrac{1-x}{1-y}$ is the Kullback--Leibler divergence. We rewrite
    \[
    nD(k/n\|\eta k/n)=k(\eta-1-\log\eta+\bigO(k/n)).
    \]
    Write $c:=\limsup_{k\to\infty}\lambda_k/k<\eta-1-\log\eta$. We use this to write
    \begin{align*}
        \liminf_{n\gg k\gg 1}\frac{-1}{\lambda_k}\log\P\left(E_{k,n}<\log(n/k)-\log\eta\right)
        &\ge \liminf_{n\gg k\gg 1}\frac{k}{\lambda_k}\left(\eta-1-\log\eta+\bigO(k/n)\right)\\
        &=\frac{\eta-1-\log\eta}{c}>1,
    \end{align*}
    where we interpret the right-hand side as $+\infty$ if $c=0$. This proves the first claim.
    Since $\eta-1-\log\eta\to\infty$ as $\eta\to\infty$, for any $\lambda_k=\bigO(k)$ there exists some $\eta>1$ with $\limsup_k\lambda_k/k<\eta-1-\log\eta$. For $\lambda_k=o(k)$, we have $c=0<\eta-1-\log\eta$ for any $\eta>1$, since $\eta-1-\log\eta>0$ for $\eta>1$.
\end{proof}
\begin{corollary}\label{cor:conditioning}
Let $A_{k,n}$ be any event, let $\lambda_k\to\infty$ with
$\lambda_k=\bigO(k)$, and choose $\eta>1$ so that the conclusion of
\cref{lem:intermediate-ld} holds at speed $\lambda_k$. Define
\[
G_{k,n}:=\{E_{k,n}\ge\log(n/k)-\log\eta\}.
\]
Then
\begin{align*}
\limsup_{n\gg k\gg1}\frac1{\lambda_k}
\log\P(A_{k,n}\mid G_{k,n})\le-1
&\quad\Longrightarrow\quad
\limsup_{n\gg k\gg1}\frac1{\lambda_k}
\log\P(A_{k,n})\le-1,\\
\liminf_{n\gg k\gg1}\frac1{\lambda_k}
\log\P(A_{k,n}\mid G_{k,n})\ge-1
&\quad\Longrightarrow\quad
\liminf_{n\gg k\gg1}\frac1{\lambda_k}
\log\P(A_{k,n})\ge-1.
\end{align*}
In particular, $-\log\P(A_{k,n}\mid G_{k,n})\sim\lambda_k$
implies $-\log\P(A_{k,n})\sim\lambda_k$.
\end{corollary}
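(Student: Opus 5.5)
The plan is to decompose $\P(A_{k,n})$ according to whether $G_{k,n}$ occurs:
\[
\P(A_{k,n})=\P(A_{k,n}\mid G_{k,n})\P(G_{k,n})+\P(A_{k,n}\cap G_{k,n}^c),
\]
and to bound each term at speed $\lambda_k$. Write $\delta>0$ for the strict margin in the conclusion of \cref{lem:intermediate-ld}. For all sufficiently large $n\gg k$ this gives $\P(G_{k,n}^c)\le e^{-(1+\delta)\lambda_k}$. In particular $\P(G_{k,n})\to1$, since $\lambda_k\to\infty$.

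For the lower-bound implication, use $\P(A_{k,n})\ge \P(A_{k,n}\mid G_{k,n})\P(G_{k,n})$. Taking logarithms and dividing by $\lambda_k$ gives
\[
\frac1{\lambda_k}\log\P(A_{k,n})\ge \frac1{\lambda_k}\log\P(A_{k,n}\mid G_{k,n})+\frac{\log\P(G_{k,n})}{\lambda_k}.
\]
The last term tends to $0$, because $\log\P(G_{k,n})\to0$ and $\lambda_k\to\infty$. Taking $\liminf$ then yields the claim. This direction does not use the rate of \cref{lem:intermediate-ld} at all.

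For the upper-bound implication, bound the two terms separately:
\[
\P(A_{k,n})\le \P(A_{k,n}\mid G_{k,n})+\P(G_{k,n}^c).
\]
Fix $\epsilon\in(0,\delta)$. By hypothesis, eventually $\P(A_{k,n}\mid G_{k,n})\le e^{-(1-\epsilon)\lambda_k}$. Also $\P(G_{k,n}^c)\le e^{-(1+\delta)\lambda_k}\le e^{-(1-\epsilon)\lambda_k}$. Hence $\P(A_{k,n})\le 2e^{-(1-\epsilon)\lambda_k}$, and therefore
\[
\limsup\frac1{\lambda_k}\log\P(A_{k,n})\le-(1-\epsilon).
\]
Since $\epsilon$ is arbitrary, this gives the limsup bound. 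The final ``in particular'' statement follows by combining the two implications.

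The only delicate point is the upper bound. There we must ensure that the bad event $G_{k,n}^c$ is strictly negligible at speed $\lambda_k$, and this is exactly what the strict inequality $>1$ in \cref{lem:intermediate-ld} provides. The choice of $\eta$ in the hypothesis is what guarantees that this strict inequality holds.
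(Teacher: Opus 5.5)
Your proof is correct and follows the paper's argument exactly: you use $\P(A_{k,n})\le\P(A_{k,n}\mid G_{k,n})+\P(G_{k,n}^c)$ for the upper implication and $\P(A_{k,n})\ge\P(A_{k,n}\mid G_{k,n})\P(G_{k,n})$ with $\P(G_{k,n})\to1$ for the lower one, and you fill in the $\epsilon$-bookkeeping the paper leaves implicit. One small remark: your own bound $\P(A_{k,n})\le 2e^{-(1-\epsilon)\lambda_k}$ shows that a non-strict rate $\liminf_{n\gg k\gg1}\frac{-1}{\lambda_k}\log\P(G_{k,n}^c)\ge1$ would already suffice at this logarithmic level, so the strict inequality from \cref{lem:intermediate-ld} is a convenience here rather than the crux of the upper bound.
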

\begin{proof}
For the first implication, use
\[
\P(A_{k,n})
\le \P(A_{k,n}\mid G_{k,n})+\P(G_{k,n}^{c}),
\]
and note that the second term has a strictly larger exponential rate.
For the second implication,
\[
\P(A_{k,n})
\ge \P(A_{k,n}\mid G_{k,n})\P(G_{k,n}),
\]
while $\P(G_{k,n})\to1$. Taking logarithms proves both claims.
\end{proof}

\cref{lem:hill-bound} reduces bounding $\alpha H_{k,n}$ to bounding the sum $\sum_{i=1}^{k-1}\Delta_i$ of i.i.d.\ $\text{Exp}(1)$ variables, for which we can directly apply Chernoff bounds and Cramér's theorem: 

\begin{lemma}\label{lem:gamma-cramer}
    Let $\Gamma_m=\sum_{i=1}^m\Delta_i$ for i.i.d.\ $\Delta_i\sim\text{Exp}(1)$. For every $m\ge1$ and $\theta\ge0$,
    \[
    \P(\Gamma_m\ge m(1+\theta))\le e^{-mf(-\theta)},
    \]
    and for every $m\ge1$ and $\theta\in[0,1)$,
    \[
    \P(\Gamma_m\le m(1-\theta))\le e^{-mf(\theta)}.
    \]
    Moreover, these bounds are asymptotically tight:
    \[
    \frac1m\log\P(\Gamma_m\ge m(1+\theta))\to-f(-\theta),\qquad \frac1m\log\P(\Gamma_m\le m(1-\theta))\to-f(\theta).
    \]
\end{lemma}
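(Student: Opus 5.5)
The plan is the standard Chernoff bound for the inequalities, followed by Cramér's theorem (or an explicit Gamma-density computation) for the asymptotic tightness.

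\emph{Upper tail.} Since $\Delta_i\sim\text{Exp}(1)$, for $s<1$ we have $\E e^{s\Delta_i}=1/(1-s)$, so $\log\E e^{s\Gamma_m}=-m\log(1-s)$. Markov's inequality gives, for $s\in[0,1)$,
\[
\P(\Gamma_m\ge m(1+\theta))\le \exp\bigl(-m[s(1+\theta)+\log(1-s)]\bigr).
\]
We then optimise over $s$. The exponent is maximised at $1-s=1/(1+\theta)$, i.e.\ $s=\theta/(1+\theta)\in[0,1)$. Its value is $\theta-\log(1+\theta)=f(-\theta)$, which yields the first inequality.

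\emph{Lower tail.} We apply the same argument to $-\Gamma_m$ with $s\ge0$, using $\E e^{-s\Delta_i}=1/(1+s)$:
\[
\P(\Gamma_m\le m(1-\theta))\le\exp\bigl(-m[\log(1+s)-s(1-\theta)]\bigr).
\]
The optimiser satisfies $1+s=1/(1-\theta)$, which is admissible for $\theta\in[0,1)$. The resulting exponent is $-\log(1-\theta)-\theta=f(\theta)$. For $\theta=0$ both bounds are trivial, since $f(0)=0$.

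\emph{Tightness.} The log-moment generating function of $\text{Exp}(1)$ is $\Lambda(s)=-\log(1-s)$, finite on $(-\infty,1)$. Its Legendre transform is
\[
\Lambda^*(x)=x-1-\log x,
\]
which equals $f(-\theta)$ at $x=1+\theta$ and $f(\theta)$ at $x=1-\theta$. Cramér's theorem gives the lower bound
\[
\liminf_m \tfrac1m\log\P(\Gamma_m/m\in O)\ge-\inf_O\Lambda^*
\]
for open $O$. We take $O=(1+\theta,\infty)$, respectively $O=(0,1-\theta)$. Since $\Lambda^*$ is continuous and monotone on each side of $1$, the infimum over $O$ equals $\Lambda^*$ at the endpoint. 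Combined with the upper bounds, this gives the stated limits. The case $\theta=0$ is immediate: the probabilities tend to $1/2$ by the CLT, so their logarithms divided by $m$ tend to $0=f(0)$.

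As a self-contained alternative to Cramér, one can use the Gamma density directly. For the upper tail,
\[
\P(\Gamma_m\ge x)\ge \P\bigl(\Gamma_m\in[x,x+1]\bigr)\ge \frac{x^{m-1}e^{-x-1}}{(m-1)!}.
\]
With $x=m(1+\theta)$ and Stirling's formula, $\frac1m\log$ of the right-hand side tends to $\log(1+\theta)-\theta$.

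For the lower tail with $\theta\in(0,1)$, we bound the probability from below by the interval $[x-1,x]$ with $x=m(1-\theta)$. On this interval the density is at least $(x-1)^{m-1}e^{-x}/(m-1)!$, and Stirling again yields the rate $\log(1-\theta)+\theta=-f(\theta)$.

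There is no real obstacle here. The only point needing care is the boundary behaviour, namely $\theta=0$ and the admissibility $s<1$, both of which are handled above.
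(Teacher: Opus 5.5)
Your proposal is correct and follows the paper's proof: the same Chernoff bounds with the optimizing parameters $s=\theta/(1+\theta)$ and $s=\theta/(1-\theta)$, followed by Cram\'er's theorem for the matching lower bounds. Your additional details are all valid and go a little beyond what the paper writes out, namely the explicit Legendre transform $x-1-\log x$, the separate treatment of $\theta=0$, and the self-contained Gamma-density and Stirling lower bound.
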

\begin{proof}
    Since $\Gamma_m\sim\text{Gamma}(m,1)$, $\E[e^{t\Gamma_m}]=(1-t)^{-m}$ for $t<1$. For $\theta\ge0$, the Chernoff bound with $t=\theta/(1+\theta)\in[0,1)$ gives
    \[
    \P(\Gamma_m\ge m(1+\theta))\le e^{-tm(1+\theta)}(1-t)^{-m}=\exp\left(-m\left(\theta-\log(1+\theta)\right)\right)=e^{-mf(-\theta)}.
    \]
    Similarly, for $\theta\in[0,1)$, the Chernoff bound with $s=\theta/(1-\theta)\ge0$ applied to $\E[e^{-s\Gamma_m}]=(1+s)^{-m}$ gives
    \[
    \P(\Gamma_m\le m(1-\theta))\le e^{sm(1-\theta)}(1+s)^{-m}=\exp\left(-m\left(-\theta-\log(1-\theta)\right)\right)=e^{-mf(\theta)}.
    \]
    The limits follow from applying Cramér's theorem to the i.i.d. random variables $(\Delta_i-1)_{i\in[m]}$.
\end{proof}

The tightness claim of \cref{lem:gamma-cramer} holds for fixed $\theta$. \cref{cor:hill-rho-neg} below (and the bias-corrected result in~\cref{sec:md-bias-corrected}) additionally needs a matching lower bound that is \emph{uniform} as $\theta=\theta_m\downarrow0$.

\begin{corollary}\label{cor:gamma-cramer-md}
    Let $\Gamma_m=\sum_{i=1}^m\Delta_i$ for i.i.d.\ $\Delta_i\sim\text{Exp}(1)$. If $\theta_m\downarrow0$ and $m\theta_m^2\to\infty$, then
    \[
    \frac1{m\theta_m^2}\log\P(\Gamma_m\ge m(1+\theta_m))\to-\tfrac12,\qquad \frac1{m\theta_m^2}\log\P(\Gamma_m\le m(1-\theta_m))\to-\tfrac12.
    \]
\end{corollary}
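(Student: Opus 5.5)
The upper bounds come for free from \cref{lem:gamma-cramer}: since $f(\pm\theta_m)\ge$ a quantity asymptotic to $\theta_m^2/2$, we get
\[
\frac1{m\theta_m^2}\log\P(\Gamma_m\ge m(1+\theta_m))\le -\frac{f(-\theta_m)}{\theta_m^2}\to-\tfrac12 .
\]
The same holds for the lower tail. It therefore remains to prove the matching lower bounds, uniformly as $\theta_m\downarrow0$. For these I will use a standard exponential change of measure (tilting), with the tilt chosen so that the tilted mean sits slightly beyond the target.

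For the upper tail, fix a small $\delta>0$ and set $\theta'=(1+\delta)\theta_m$. Under the tilted measure $\mathrm{Q}$ with density proportional to $e^{t\Gamma_m}$, where $t=\theta'/(1+\theta')$, the $\Delta_i$ are i.i.d.\ $\text{Exp}(1-t)$ with mean $1+\theta'$ and variance $(1+\theta')^2$. Restrict to the window
\[
W=\{m(1+\theta_m)\le\Gamma_m\le m(1+(1+2\delta)\theta_m)\}.
\]
On $W$ the likelihood ratio $d\P/d\mathrm{Q}=(1-t)^{-m}e^{-t\Gamma_m}$ is at least
\[
\exp\bigl(-m[t(1+(1+2\delta)\theta_m)+\log(1-t)]\bigr).
\]
Since $t\sim\theta'$, the exponent equals $-m\theta_m^2\bigl[\tfrac12(1+\delta)^2+\bigO(\delta)\bigr](1+o(1))$; expanding, it is $m\theta_m^2\cdot\tfrac12(1+O(\delta))$.

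The key step is then to show $\mathrm{Q}(W)\to 1$. Under $\mathrm{Q}$, $\Gamma_m$ has mean $m(1+\theta')$, which lies at distance $\delta m\theta_m$ from both endpoints of $W$, and standard deviation of order $\sqrt m$. Because $m\theta_m^2\to\infty$, we have $\delta m\theta_m\gg\sqrt m$, so Chebyshev's inequality gives $\mathrm{Q}(W)\to1$. Combining,
\[
\liminf \frac1{m\theta_m^2}\log\P(\Gamma_m\ge m(1+\theta_m))\ge -\tfrac12(1+O(\delta)),
\]
and letting $\delta\downarrow0$ finishes the upper tail.

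The lower tail is handled identically, using the tilt $e^{-s\Gamma_m}$ with $s=\theta'/(1-\theta')$. The tilted variables are then $\text{Exp}(1+s)$ with mean $1-\theta'$ and variance at most $1$, and the window is the mirror image of $W$ just below $m(1-\theta_m)$. Since $\theta_m\to0$, we have $\theta'<1$ eventually, so everything is well defined.

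The main obstacle is the bookkeeping in the second-order expansion of the likelihood-ratio exponent, namely checking that the $\delta$-dependent error is $O(\delta)\,m\theta_m^2$ uniformly in $m$. This follows from $\log(1-t)=-t-t^2/2+O(t^3)$ with $t=O(\theta_m)\to0$. The condition $m\theta_m^2\to\infty$ enters only through the Chebyshev step.
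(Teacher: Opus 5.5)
Your proof is correct, but for the matching lower bounds it takes a different route from the paper. The upper bounds are handled exactly as you do them.

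For the lower bounds, the paper rescales to $W_m=(\Gamma_m-m)/(m\theta_m)$. It checks that $\frac1{m\theta_m^2}\log\E[e^{sm\theta_m^2W_m}]\to s^2/2$ for every $s\in\R$, using $|s\theta_m|<1$ eventually. It then invokes the G\"artner--Ellis theorem to obtain a full LDP for $W_m$ at speed $m\theta_m^2$ with rate $x^2/2$, and reads off the open-set lower bounds for $(1,\infty)$ and $(-\infty,-1)$.

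You instead carry out by hand the change-of-measure argument that underlies that theorem. You tilt to $\text{Exp}(1-t)$ with mean $1+(1+\delta)\theta_m$, bound $d\P/d\mathrm{Q}$ from below on a window of half-width $\delta m\theta_m$, and use Chebyshev, which is the only place $m\theta_m^2\to\infty$ enters. Your exponent bookkeeping checks out. The bracket equals $f(-\theta')+\theta'\delta\theta_m/(1+\theta')$ for the upper tail and $f(\theta')+\theta'\delta\theta_m/(1-\theta')$ for the lower tail, both $\theta_m^2\bigl[\tfrac12(1+\delta)^2+\delta(1+\delta)\bigr](1+o(1))$. The only slip is a dropped minus sign in ``expanding, it is $m\theta_m^2\cdot\frac12(1+\bigO(\delta))$''.

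The paper's route is shorter and delivers more than needed, namely an LDP for all Borel sets. The price is relying on a black-box theorem whose hypotheses (existence of the limiting log-moment generating function everywhere, essential smoothness) must be checked. Your route is self-contained and uses only $\theta_m\to0$ and $m\theta_m^2\to\infty$. It also gives an explicit finite-$m$ bound,
\[
\P(\Gamma_m\ge m(1+\theta_m))\ge\Bigl(1-\tfrac{(1+\theta')^2}{\delta^2m\theta_m^2}\Bigr)\exp\Bigl(-m\bigl[f(-\theta')+\tfrac{\theta'\delta\theta_m}{1+\theta'}\bigr]\Bigr),
\]
from which quantitative error rates could be read off. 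This tilting-plus-concentration device is exactly the one the paper itself uses later for the lower bounds in the proof of \cref{thm:hill-rho0}.
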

\begin{proof}
    The upper bounds on both probabilities follow from \cref{lem:gamma-cramer} together with $f(\theta)\sim\theta^2/2$ as $\theta\downarrow0$. For the matching lower bounds, fix $s\in\R$ and set $W_m:=(\Gamma_m-m)/(m\theta_m)$. For all large $m$, $|s\theta_m|<1$, and since $\E[e^{t\Gamma_m}]=(1-t)^{-m}$,
    \[
    \frac1{m\theta_m^2}\log\E\left[e^{sm\theta_m^2W_m}\right]=\frac{-m\log(1-s\theta_m)-sm\theta_m}{m\theta_m^2}\longrightarrow\frac{s^2}2,
    \]
    using $-\log(1-s\theta_m)=s\theta_m+\tfrac12s^2\theta_m^2+\bigO(\theta_m^3)$ and $m\theta_m=(m\theta_m^2)/\theta_m\to\infty$. By the Gärtner--Ellis theorem~\citep[Thm.~2.3.6]{dembo1998large}, $W_m$ satisfies a large deviation principle with speed $m\theta_m^2$ and good rate function $x^2/2$. In particular,
    \[
    \liminf_m\frac1{m\theta_m^2}\log\P(W_m>1)\ge-\frac12\quad\text{and}\quad\liminf_m\frac1{m\theta_m^2}\log\P(W_m<-1)\ge-\frac12.
    \]
    Combined with the upper bounds above, this proves the claim.
\end{proof}

We are now ready to prove the upper bounds for the large and moderate deviations:
\begin{lemma}\label{thm:upper-bounds}
    Consider a sequence $\theta_k=\bigO(1)$ with $\theta_k\gg k^{-1/2}$ such that $\theta_k\gg\varepsilon(n/(\eta k))$ for some $\eta>1$. Then
    \[
    \limsup_{n\gg k\gg1}\frac1{kf(-\theta_k)}\log\P(\alpha H_{k,n}>1+\theta_k)\le -1.
    \]
    Moreover, if additionally $\limsup_{k\to\infty}\theta_k<1$, then
    \[
    \limsup_{n\gg k\gg1}\frac1{kf(\theta_k)}\log\P(\alpha H_{k,n}<1-\theta_k)\le -1.
    \]
\end{lemma}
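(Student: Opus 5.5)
Our plan is to condition on the good event $G_{k,n}=\{E_{k,n}\ge\log(n/k)-\log\eta\}$, apply the Potter sandwich of \cref{lem:hill-bound} on that event, and reduce both tails to Gamma tails handled by \cref{lem:gamma-cramer}. We then remove the conditioning with \cref{cor:conditioning}.

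Set $\lambda_k:=kf(-\theta_k)$ for the upper tail and $\lambda_k:=kf(\theta_k)$ for the lower tail. Since $\theta_k=\bigO(1)$ we have $\lambda_k=\bigO(k)$. Since $\theta_k\gg k^{-1/2}$ and $f(\pm\theta)\gtrsim\min(\theta^2,\theta)$, we also have $\lambda_k\to\infty$.

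The first step is the choice of $\eta$. The hypothesis fixes an $\eta>1$ with $\theta_k\gg\varepsilon(n/(\eta k))$. By monotonicity of $\varepsilon(\cdot)$, replacing $\eta$ by a smaller $\eta'\in(1,\eta]$ keeps $\varepsilon(n/(\eta' k))\le\varepsilon(n/(\eta k))$, so the hypothesis survives. We need \cref{lem:intermediate-ld} to hold at speed $\lambda_k$, with a strictly larger exponent for $\P(G^c_{k,n})$. The case $\theta_k\to0$ gives $\lambda_k=o(k)$, and then any $\eta$ works. The large-deviation case is the main obstacle: $\lambda_k/k$ may stay bounded away from zero, and the lemma then needs $\eta-1-\log\eta>\limsup\lambda_k/k$, i.e.\ a large $\eta$.

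Enlarging $\eta$ is harmless only if $\varepsilon(n/(\eta k))$ still vanishes along the sequence. For fixed $\eta$ it does, since $n/(\eta k)\to\infty$ and $\varepsilon(t)\to0$ by \cref{lem:potter-rewritten}. Along a subsequence with $\theta_k\to\theta>0$, the condition $\theta_k\gg\varepsilon$ then reduces to $\varepsilon\to0$, which holds for every fixed $\eta$. So we split along subsequences into $\theta_k\to0$ and $\theta_k\to\theta>0$ and choose $\eta$ per case; in both cases we obtain $\varepsilon_k:=\varepsilon(n/(\eta k))=o(\theta_k)$. Passing to subsequences is enough because the claim is a $\limsup$.

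Next we work on $G_{k,n}$ with $b=\log(n/(\eta k))$. There \cref{lem:hill-bound} gives
\[
\alpha H_{k,n}\le \alpha\log(1+\varepsilon_k)+(1+\varepsilon_k)\Gamma_{k-1}/k .
\]
Hence $\{\alpha H_{k,n}>1+\theta_k\}\cap G_{k,n}\subseteq\{\Gamma_{k-1}>(k-1)(1+\theta_k')\}$, where $1+\theta_k'=\frac{k}{k-1}\,(1+\theta_k-\alpha\log(1+\varepsilon_k))/(1+\varepsilon_k)$. Because $\varepsilon_k=o(\theta_k)$ and $1/k=o(\theta_k)$ (as $\theta_k\gg k^{-1/2}$), we get $\theta_k'=\theta_k(1+o(1))$.

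The key point is that $\Gamma_{k-1}$ depends only on $\Delta_{1:k-1}$, so it is independent of $E_{k,n}$. Conditioning on $G_{k,n}$ therefore does not change its law, and \cref{lem:gamma-cramer} gives
\[
\P(\alpha H_{k,n}>1+\theta_k\mid G_{k,n})\le e^{-(k-1)f(-\theta_k')}.
\]
Continuity of $f$ together with $f(\pm\theta)\sim\theta^2/2$ near $0$ yields $f(-\theta'_k)\sim f(-\theta_k)$ in both regimes. Thus $(k-1)f(-\theta_k')\sim\lambda_k$, and \cref{cor:conditioning} transfers the bound to the unconditional probability.

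The lower tail is handled symmetrically using the left inequality of \cref{lem:hill-bound}. On $G_{k,n}$, the event $\{\alpha H_{k,n}<1-\theta_k\}$ lies inside $\{\Gamma_{k-1}<(k-1)(1-\theta_k'')\}$, where $\theta_k''=\theta_k(1+o(1))$. The assumption $\limsup\theta_k<1$ keeps $\theta_k''$ inside $[0,1)$, so $f$ stays continuous and finite there. When $1-\theta''_k\le0$ the event is empty and the bound holds trivially.
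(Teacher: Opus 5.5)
Your proposal is correct and follows essentially the same route as the paper: condition on $G_{k,n}$, use \cref{lem:hill-bound} and the independence of $\Gamma_{k-1}$ from $E_{k,n}$ to reduce to the Chernoff bounds of \cref{lem:gamma-cramer} with a perturbed $\theta_k'=\theta_k(1+o(1))$, and then remove the conditioning via \cref{cor:conditioning}. The only cosmetic difference is how $\eta$ is chosen: you pass to subsequences along which $\theta_k$ converges, whereas the paper splits the indices at a fixed threshold $\delta$ with $f(\delta)<\eta-1-\log\eta$; both work because the claim is a $\limsup$.
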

\begin{proof}
    The condition $\theta_k\gg k^{-1/2}$ ensures that $kf(\theta_k),kf(-\theta_k)\to\infty$.
    Write $\varepsilon_k=\varepsilon(n/(\eta k))$. We want to apply \cref{cor:conditioning} with $\lambda_k=kf(\mp\theta_k)=\bigO(k)$, which requires that the conclusion of \cref{lem:intermediate-ld} holds for the given $\eta$ at speed $\lambda_k$. This is not automatic: enlarging $\eta$ makes \cref{lem:intermediate-ld} easier to satisfy, but since $\varepsilon(\cdot)$ is non-increasing, it also weakens the assumption $\theta_k\gg\varepsilon(n/(\eta k))$. We resolve this by splitting the sequence. Fix $\delta\in(0,1)$ with $f(\delta)<\eta-1-\log\eta$ and note that $f(-\theta)\le f(\theta)$ for $\theta\in[0,1)$. Along the indices with $\theta_k\le\delta$, we have $\limsup\lambda_k/k\le f(\delta)<\eta-1-\log\eta$, so \cref{lem:intermediate-ld} holds for the given $\eta$. Along the indices with $\theta_k>\delta$, we have $\theta_k\gg\varepsilon(n/(\eta' k))$ for \emph{every} fixed $\eta'>1$, since $\varepsilon(n/(\eta'k))\to0$ by \cref{lem:potter-rewritten}, so that we may replace $\eta$ by a larger $\eta'$ for which \cref{lem:intermediate-ld} holds at speed $\lambda_k=\bigO(k)$. Since the claims are $\limsup$ statements, it suffices to prove them along each of these two subsequences (whenever infinite). Hence, in the remainder of the proof, we assume without loss of generality that $\eta$ is such that both $\theta_k\gg\varepsilon_k$ and the conclusion of \cref{lem:intermediate-ld} at speed $kf(\mp\theta_k)$ hold, so that \cref{cor:conditioning} lets us condition on $E_{k,n}\ge\log(n/k)-\log\eta$.

    \paragraph{Upper tail.} By~\cref{lem:hill-bound} and~\cref{lem:gamma-cramer},
    \begin{align*}
    \P(\alpha H_{k,n}>1+\theta_k\mid G_{k,n})
    &\le \P\left(\frac1{k}\Gamma_{k-1}> \frac{1+\theta_k-\alpha\log(1+\varepsilon_k)}{1+\varepsilon_k}\right)\\
    &\le \exp\left(-(k-1)f\left(-\theta_k'\right)\right),
    \end{align*}
    where $G_{k,n}=\{E_{k,n}\ge\log(n/k)-\log\eta\}$ is as in \cref{cor:conditioning}, and we used that $\Gamma_{k-1}$ is independent of $G_{k,n}$,
    where the last step applies~\cref{lem:gamma-cramer} with $m=k-1$ and
    \[1+\theta_k'=\tfrac{k}{k-1}\cdot\tfrac{1+\theta_k-\alpha\log(1+\varepsilon_k)}{1+\varepsilon_k}.\]
    Since $\varepsilon_k\ll\theta_k$ by choice of $\eta$, we have $\theta_k'=\theta_k+\bigO(\varepsilon_k)+\bigO(1/k)=\theta_k+o(\theta_k)$. Since $f(\theta_k+o(\theta_k))\sim f(\theta_k)$ holds for both vanishing $\theta_k$ and non-vanishing $\theta_k$, we conclude
    \[
    \limsup_{n\gg k\gg1}\frac1{kf(-\theta_k)}\log\P(\alpha H_{k,n}>1+\theta_k\mid G_{k,n})\le-1,
    \]
    and \cref{cor:conditioning} yields the claim.

    \paragraph{Lower tail.} Symmetrically, using the lower bound of~\cref{lem:hill-bound},
    \begin{align*}
    \P(\alpha H_{k,n}<1-\theta_k\mid G_{k,n})
    &\le \P\left(\frac1{k}\Gamma_{k-1}< \frac{1-\theta_k-\alpha\log(1-\varepsilon_k)}{1-\varepsilon_k}\right)\\
    &\le\exp\left(-(k-1)f(\theta_k'')\right),
    \end{align*}
    where, as above, $1-\theta_k''=\tfrac{k}{k-1}\cdot\tfrac{1-\theta_k-\alpha\log(1-\varepsilon_k)}{1-\varepsilon_k}=1-\theta_k(1+o(1))$, so again $f(\theta_k'')\sim f(\theta_k)$ and the claim follows as above.
\end{proof}


We now prove matching lower bounds for $\P(\alpha H_{k,n}<1-\theta_k)$ and $\P(\alpha H_{k,n}>1+\theta_k)$:

\begin{lemma}\label{thm:lower-bounds}
    Consider a sequence $\theta_k=\bigO(1)$ with $\theta_k\gg k^{-1/2}$ such that $\theta_k\gg\varepsilon(n/(\eta k))$ for some $\eta>1$. Then
    \[
    \liminf_{n\gg k\gg1}\frac1{kf(-\theta_k)}\log\P(\alpha H_{k,n}>1+\theta_k)\ge -1.
    \]
    Moreover, if additionally $\limsup_{k\to\infty}\theta_k<1$, then
    \[
    \liminf_{n\gg k\gg1}\frac1{kf(\theta_k)}\log\P(\alpha H_{k,n}<1-\theta_k)\ge -1.
    \]
\end{lemma}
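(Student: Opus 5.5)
The plan mirrors the proof of \cref{thm:upper-bounds}, now using the opposite inequalities of \cref{lem:hill-bound} and the lower bounds of \cref{lem:gamma-cramer} and \cref{cor:gamma-cramer-md}.

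\textbf{Reduction to the conditioned event.} Set $\lambda_k=kf(\mp\theta_k)$ and $\varepsilon_k=\varepsilon(n/(\eta k))$. First I split the index sequence exactly as in \cref{thm:upper-bounds}: indices with $\theta_k\le\delta$ and indices with $\theta_k>\delta$. On the second subsequence I enlarge $\eta$. This lets me assume both that $\varepsilon_k\ll\theta_k$ and that the conclusion of \cref{lem:intermediate-ld} holds at speed $\lambda_k$. Since the claims are $\liminf$ statements, it suffices to prove them along each subsequence. By the second implication of \cref{cor:conditioning}, it then suffices to bound $\P(\cdot\mid G_{k,n})$ from below.

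\textbf{Upper tail.} On $G_{k,n}$, \cref{lem:hill-bound} gives $\alpha H_{k,n}\ge\alpha\log(1-\varepsilon_k)+(1-\varepsilon_k)\Gamma_{k-1}/k$. Hence
\[
\P(\alpha H_{k,n}>1+\theta_k\mid G_{k,n})\ge\P\left(\Gamma_{k-1}>(k-1)(1+\theta_k')\right),
\]
with $1+\theta_k'=\tfrac{k}{k-1}\cdot\tfrac{1+\theta_k-\alpha\log(1-\varepsilon_k)}{1-\varepsilon_k}$, where I used that $\Gamma_{k-1}$ is independent of $E_{k,n}$. As before, $\theta_k'=\theta_k+\bigO(\varepsilon_k)+\bigO(1/k)=\theta_k(1+o(1))$. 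Here $\bigO(1/k)=o(\theta_k)$ because $\theta_k\gg k^{-1/2}\ge k^{-1}$.

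\textbf{Lower tail.} Symmetrically, the upper inequality of \cref{lem:hill-bound} gives
\[
\P(\alpha H_{k,n}<1-\theta_k\mid G_{k,n})\ge\P\left(\Gamma_{k-1}<(k-1)(1-\theta_k'')\right),
\]
with $\theta_k''=\theta_k(1+o(1))$. Because $\limsup\theta_k<1$, we have $\theta_k''$ bounded away from $1$ for large $k$.

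\textbf{Main obstacle: a lower bound uniform along the varying sequence.} It remains to show $\liminf\frac{1}{kf(\mp\theta_k)}\log\P(\Gamma_{k-1}\gtrless(k-1)(1\pm\theta_k^*))\ge-1$ for $\theta^*_k=\theta_k(1+o(1))$. Cramér's theorem only handles fixed $\theta$, so I pass to subsequences.

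If $\theta_k\to0$, I apply \cref{cor:gamma-cramer-md} with $m=k-1$ and the sequence $\theta^*_k$. This is valid since $m\theta_k^{*2}\to\infty$. Combined with $f(\pm\theta_k)\sim\theta_k^2/2$, it gives the bound.

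If instead $\theta_k\to\theta>0$ along a subsequence, I fix a small $\gamma>0$. Monotonicity gives, for large $k$,
\[
\P(\Gamma_m\ge m(1+\theta_k^*))\ge\P(\Gamma_m\ge m(1+\theta+\gamma)).
\]
Cramér via \cref{lem:gamma-cramer} yields the rate $f(-(\theta+\gamma))$. Letting $\gamma\downarrow0$ and using continuity of $f$ gives rate $f(-\theta)\sim f(-\theta_k)$. The lower tail is handled the same way, using $\theta+\gamma<1$.

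Every subsequence has a further subsequence of one of these two types, since $\theta_k=\bigO(1)$. This yields the full $\liminf$ claims. The delicate bookkeeping is ensuring that the $o(1)$ perturbations are relative to $\theta_k$, which relies on $\varepsilon_k\ll\theta_k$ and $k^{-1}\ll\theta_k$.
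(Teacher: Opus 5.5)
Your proposal is correct and follows essentially the same route as the paper. You reduce to $G_{k,n}$ via the second implication of \cref{cor:conditioning} and apply the opposite inequalities of \cref{lem:hill-bound}, using the independence of $\Gamma_{k-1}$ from $E_{k,n}$. You then get the uniform Gamma-tail lower bound by passing to convergent subsequences of $\theta_k$: \cref{cor:gamma-cramer-md} handles a zero limit, and monotonicity plus Cramér and continuity of $f$ handle a positive limit. (The split on $\eta$ is harmless but not needed for this direction, since the lower-bound half of \cref{cor:conditioning} only uses $\P(G_{k,n})\to1$, which holds for every fixed $\eta>1$.)
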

\begin{proof}
    As in the proof of~\cref{thm:upper-bounds}, fix $\eta>1$ with $\theta_k\gg\varepsilon(n/(\eta k))$ and write $\varepsilon_k=\varepsilon(n/(\eta k))=o(\theta_k)$; by the subsequence argument at the start of the proof of~\cref{thm:upper-bounds}, we may assume that this same $\eta$ satisfies \cref{lem:intermediate-ld} for $\lambda_k=kf(\mp\theta_k)$. Again, \cref{cor:conditioning} allows us to condition on $E_{k,n}\ge\log(n/k)-\log\eta$. By~\cref{lem:hill-bound} and~\cref{lem:gamma-cramer},
    \begin{align*}
    \P(\alpha H_{k,n}>1+\theta_k\mid G_{k,n})
    &\ge \P\left(\frac1k \Gamma_{k-1}> \frac{1+\theta_k-\alpha\log(1-\varepsilon_k)}{1-\varepsilon_k}\right)\\
    &\ge \exp\left(-(k-1)f(-\theta_k')(1+o(1))\right),
    \end{align*}
    where $1+\theta_k'=\tfrac k{k-1}\cdot(1+\theta_k-\alpha\log(1-\varepsilon_k))/(1-\varepsilon_k)=1+\theta_k(1+o(1))$ since $\varepsilon_k=o(\theta_k)$ and $k\to\infty$, so $f(-\theta_k')=f(-\theta_k)(1+o(1))$ as before. (For $\theta_k=\theta$ constant, i.e.\ in the large-deviations regime of~\cref{sec:ld}, this last step is exactly the tightness statement in~\cref{lem:gamma-cramer}. For $\theta_k\downarrow0$, i.e.\ in the moderate-deviations regime of~\cref{sec:md}, it instead follows from the uniform refinement in~\cref{cor:gamma-cramer-md}, since $\theta_k'\downarrow0$ and $kf(-\theta_k')\sim\tfrac12k\theta_k'^2\to\infty$ by $\theta_k\gg k^{-1/2}$. For a general bounded sequence $\theta_k$, it suffices to verify the $\liminf$ along subsequences on which $\theta_k$ converges: if the limit is zero, \cref{cor:gamma-cramer-md} applies; if the limit is $\theta^*>0$, then $\P(\Gamma_{k-1}>(k-1)(1+\theta_k'))\ge\P(\Gamma_{k-1}>(k-1)(1+\theta^*+\epsilon))$ for any $\epsilon>0$ and large $k$, and the tightness statement in~\cref{lem:gamma-cramer} together with the continuity of $f$ and $\epsilon\downarrow0$ gives the claim.) This proves the first claim. The second follows symmetrically using the upper bound of~\cref{lem:hill-bound} and the lower-tail bound of~\cref{lem:gamma-cramer} (respectively~\cref{cor:gamma-cramer-md}).
\end{proof}

Finally, \cref{thm:main-hill} follows directly from \cref{thm:upper-bounds,thm:lower-bounds}.

\section{Large deviations regime (\texorpdfstring{$\lambda_k\asymp k$}{lambda\_k of order k})}\label{sec:ld}
If $\theta_k=\theta>0$ is constant, then $\theta\gg\varepsilon(n/(\eta k))$ holds for every fixed $\eta>1$, since $n/(\eta k)\to\infty$ and \cref{lem:potter-rewritten} gives $\varepsilon(n/(\eta k))\to0$. Hence, \cref{thm:main-hill} yields the following large deviations result:

\begin{corollary}\label{cor:hill-ld}
    For $\theta>0$,
    \[
    -\log\P(\alpha H_{k,n}>1+\theta)\sim kf(-\theta),
    \]
    as $n\gg k\gg 1$. Moreover, if additionally $\theta<1$, then
    \[
    -\log\P(\alpha H_{k,n}<1-\theta)\sim kf(\theta).
    \]
\end{corollary}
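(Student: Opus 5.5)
The plan is to specialize \cref{thm:main-hill} to the constant sequence $\theta_k\equiv\theta$, so the work is in checking its hypotheses. Three conditions are needed. First, $\theta_k=\bigO(1)$, which holds trivially for a constant sequence. Second, $\theta\gg k^{-1/2}$, which holds because $k\to\infty$ and $\theta>0$ is fixed. Third, $\theta\gg\varepsilon(n/(\eta k))$ for some $\eta>1$. We fix any $\eta>1$, say $\eta=2$. Since $k/n\to0$, we have $n/(\eta k)\to\infty$, and \cref{lem:potter-rewritten} then gives $\varepsilon(n/(\eta k))\to0$. A fixed positive constant dominates a null sequence, so the third condition holds.

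With these hypotheses verified, the first statement of \cref{thm:main-hill} gives $-\log\P(\alpha H_{k,n}>1+\theta)\sim kf(-\theta)$. This requires no restriction on $\theta$ beyond positivity: for $\theta>0$ we have $f(-\theta)=\theta-\log(1+\theta)>0$, so the normalization $kf(-\theta)$ tends to infinity and is nondegenerate.

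For the lower tail, the additional hypothesis of \cref{thm:main-hill} is $\limsup_k\theta_k<1$. For a constant sequence this reduces to $\theta<1$, which is exactly the added assumption in the corollary, so the second statement follows directly.

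There is essentially no obstacle here. The only point needing care is the third condition: the Potter rate must be evaluated along $n/(\eta k)\to\infty$, which uses the intermediate-sequence assumption $k/n\to0$. All the analytic work, namely the uniformity issues and the subsequence splitting over $\eta$, is already handled inside the proof of \cref{thm:main-hill}. In the constant case, the lower bound there reduces to the fixed-$\theta$ tightness statement of \cref{lem:gamma-cramer}.
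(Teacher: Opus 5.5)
Your proposal is correct and matches the paper's argument: the paper also gets the corollary by applying \cref{thm:main-hill} to the constant sequence $\theta_k\equiv\theta$, using that $n/(\eta k)\to\infty$ and \cref{lem:potter-rewritten} to get $\varepsilon(n/(\eta k))\to0$ for any fixed $\eta>1$. The other hypotheses, namely $\theta_k=\bigO(1)$, $\theta\gg k^{-1/2}$, and $\limsup_k\theta_k=\theta<1$ for the lower tail, are immediate, as you check.
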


\begin{remark}\label{rem:lambert}
The relation $f(\theta)=-\theta-\log(1-\theta)$ can be inverted using the Lambert $W$ function~\citep{Corless_Gonnet_Hare_Jeffrey_Knuth_1996}, which is implicitly defined as the solution of $we^w=x$ for $x\ge -e^{-1}$. There are two solutions, the \emph{principal branch} $W_0(x)\ge-1$ and the secondary branch $W_{-1}(x)\le-1.$
For $\lambda>0$, the $\theta\in(0,1)$ solution of $f(\theta)=\lambda$ is
\[
\theta=1+W_0(-e^{-(1+\lambda)}),
\]
while the negative solution is
\[
\theta=1+W_{-1}(-e^{-(1+\lambda)}).
\]
\end{remark}

\begin{corollary}\label{cor:rescaled-hill-ld}
    The rescaled Hill estimator $\hat\alpha^{(\theta)}_{k,n}=(1-\theta)/H_{k,n}$ from \cref{def:rescaled-hill} achieves the decay rate $\lambda_k=kf(\theta)$.
\end{corollary}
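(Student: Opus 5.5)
The plan is to reduce the claim to the lower-tail statement of \cref{cor:hill-ld} via an event identity. Fix $\theta\in(0,1)$, $\alpha>0$, $U\in\mathcal{RV}'_{1/\alpha}$ and $X_1,\dots,X_n\sim\text{RV}(U)$. I will first show that
\[
\{\hat\alpha^{(\theta)}_{k,n}>\alpha\}=\{\alpha H_{k,n}<1-\theta\}.
\]
On $\{H_{k,n}>0\}$ this is just the rearrangement $(1-\theta)/H_{k,n}>\alpha\iff \alpha H_{k,n}<1-\theta$. By definition $H_{k,n}\ge0$, since $X_{i,n}\ge X_{k,n}$ for $i\le k$. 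On $\{H_{k,n}=0\}$, the convention $c/T:=+\infty$ gives $\hat\alpha^{(\theta)}_{k,n}=+\infty>\alpha$, and also $\alpha H_{k,n}=0<1-\theta$. So both events contain $\{H_{k,n}=0\}$, and the identity holds exactly, including when $F$ has atoms.

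With the identity in hand, \cref{cor:hill-ld} applied with $\theta\in(0,1)$ gives
\[
-\log\P(\hat\alpha^{(\theta)}_{k,n}>\alpha)=-\log\P(\alpha H_{k,n}<1-\theta)\sim kf(\theta).
\]
Setting $\lambda_k=kf(\theta)$, this says $\frac{-1}{\lambda_k}\log\P(\hat\alpha^{(\theta)}_{k,n}>\alpha)\to1$. Here $\lambda_k\to\infty$ because $f(\theta)>0$ for $\theta\ne0$ and $k\to\infty$. The limit holds for every $\alpha$ and every regularly varying $U$, which is exactly what it means in \cref{def:conservative} to achieve the decay rate $\lambda_k$. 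The statement indexes the decay rate by $k=k_n$, so I would note that the definition is read along the intermediate sequence $k_n$, with $\lambda_n:=k_nf(\theta)$.

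The only point needing care is the case $\theta=0$, which \cref{def:rescaled-hill} allows. There $f(0)=0$, so $\lambda_k=0$ does not diverge and the statement is vacuous or ill-posed. I will restrict to $\theta\in(0,1)$, where \cref{cor:hill-ld} applies. Beyond that I expect no real obstacle: all the analytic work is in \cref{thm:main-hill}, and the step most worth checking is that the convention on $\{H_{k,n}\le0\}$ makes the event identity exact rather than holding only up to a null set.
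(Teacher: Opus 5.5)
Your proposal is correct and is essentially the paper's argument: the paper also obtains this corollary from the event identity $\{\hat\alpha^{(\theta)}_{k,n}>\alpha\}=\{\alpha H_{k,n}<1-\theta\}$ (stated in \cref{sec:main-results} and made exact by the convention $c/T:=+\infty$ on $\{T\le0\}$), combined with the lower-tail statement of \cref{cor:hill-ld}. Restricting to $\theta\in(0,1)$ is the right reading, since \cref{cor:hill-ld} requires $\theta>0$ and $\lambda_k=kf(0)=0$ would not diverge.
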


In~\cref{thm:pickands}, we show that the decay rate of a rescaled Pickands estimator $(1-\theta)/P_{k,n}$ is strictly smaller than that of the rescaled Hill estimator as given in~\cref{cor:rescaled-hill-ld}.

In the remainder of this section, we prove an optimality result for the rescaled Hill estimator. For this, we make the following additional assumption on the estimator:

\begin{definition}\label{def:scale-invariant}
    An estimator $\hat\alpha_{k,n}:[0,\infty)^k\to[0,\infty]$ is \emph{scale-invariant} if for all $c>0$ and $(x_1,\dots,x_k)\in[0,\infty)^k$, we have
    \[
    \hat\alpha_{k,n}(cx_1,\dots,cx_k)=\hat\alpha_{k,n}(x_1,\dots,x_k).
    \]
\end{definition}
Moreover, we introduce \emph{normalized regular variation}~\citep{drees1998optimal}:
\begin{definition}\label{def:nrv}
We write $U\in\mathcal{NRV}_{1/\alpha}$ and say that $U$ is \emph{normalized regularly varying} with index $\tfrac1\alpha$ if the derivative $U'(t)$ of $U(t)$ exists for sufficiently large $t$ and $tU'(t)/U(t)\to\frac1\alpha$ as $t\to\infty$.
\end{definition}

We prove the following result:

\begin{theorem}\label{thm:hill-optimality}
    Consider any scale-invariant estimator $\hat\alpha_{k,n}:[0,\infty)^k\to[0,\infty]$ that achieves
    \[
    \liminf_{n\gg k\gg 1}\P\left(\hat\alpha_{k,n}(X_{1:k,n})>\alpha(1-\theta)\right)>0,
    \]
    for some $\theta\in(0,1),\alpha>0$ and $X_1,\dots,X_n\sim\text{RV}(U)$ with $U\in\mathcal{NRV}_{1/\alpha}$.
    Then the decay rate of $\hat\alpha_{k,n}$ is at most $k\cdot f(\theta)+o(k)$,
    which matches the rate of the conservative Hill estimator $\hat\alpha^{(\theta)}_{k,n}$ up to $o(k)$.
\end{theorem}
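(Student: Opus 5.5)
The plan is a change-of-measure argument. Starting from the distribution with index $\alpha$ on which $\hat\alpha_{k,n}$ exceeds $\alpha(1-\theta)$ with non-vanishing probability, we build a distribution with true index $\alpha_0:=\alpha(1-\theta)$. Under this new distribution, the event $\{\hat\alpha_{k,n}>\alpha_0\}$ is an overestimation event, and it should have probability at least $e^{-kf(\theta)-o(k)}$. That violates any decay rate exceeding $kf(\theta)+o(k)$.

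\emph{The alternative distribution.} Let $U_0(t):=U(t^{1/(1-\theta)})$. Then $U_0\in\mathcal{RV}'_{1/\alpha_0}$: it is positive, non-decreasing and left-continuous, and its regular-variation index is $\tfrac1{\alpha(1-\theta)}$. Set $\varphi(s):=\log U(e^s)$. The assumption $U\in\mathcal{NRV}_{1/\alpha}$ gives $\varphi'(s)\to1/\alpha$.

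By scale invariance, $\hat\alpha_{k,n}(X_{1:k,n})$ depends only on the vector of log-ratios $\log X_{i,n}-\log X_{k,n}$, $i\in[k]$. Using the coupling~\eqref{eq:exp-spacing-coupling}, under $U_0$ these log-ratios are
\[
\varphi\Bigl(\tfrac{E_{k,n}+S_i}{1-\theta}\Bigr)-\varphi\Bigl(\tfrac{E_{k,n}}{1-\theta}\Bigr),
\qquad S_i:=\sum_{j=i}^{k-1}\Delta_j/j .
\]
Here $S_i$ is a function of $\Delta_{1:k-1}$ and is independent of $E_{k,n}$.

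\emph{Tilting the spacings.} Let $\Q$ be the measure that changes $\Delta_1,\dots,\Delta_{k-1}$ from $\text{Exp}(1)$ to $\text{Exp}(1/(1-\theta))$ and leaves $\Delta_k,\dots,\Delta_n$ untouched. Under $\Q$, the variables $S_i/(1-\theta)$ have exactly the law of the $S_i$ under $\P$. So the $\Q$-law of the normalized sample under $U_0$ equals the law of the vector $\varphi(B+S_i)-\varphi(B)$ with standard spacings and base point $B:=E_{k,n}/(1-\theta)\approx\log(n'/k)$, where $n':=k(n/k)^{1/(1-\theta)}$.

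Since $k/n'\to0$, the pair $(n',k)$ is again intermediate. The hypothesis applies along every intermediate sequence, so the $U$-sample of size $n'$ satisfies $\P(\hat\alpha_{k,n'}>\alpha(1-\theta))\ge c>0$. The log-likelihood ratio $\log(d\Q/d\P)$ equals $\sum_{j<k}\bigl(\log\tfrac1{1-\theta}-\tfrac{\theta}{1-\theta}\Delta_j\bigr)$. By the law of large numbers it lies within $k\delta$ of $-(k-1)\,\mathrm{KL}(\text{Exp}(1/(1-\theta))\|\text{Exp}(1))=-(k-1)f(\theta)$ with $\Q$-probability tending to one, for any fixed $\delta>0$. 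The standard inequality
\[
\P(A)\ge e^{-k(f(\theta)+\delta)}\bigl(\Q(A)-\Q(\log\tfrac{d\Q}{d\P}>k(f(\theta)+\delta))\bigr)
\]
then gives $\P_{U_0}(\hat\alpha_{k,n}>\alpha_0)\ge e^{-k f(\theta)-o(k)}$ once $\Q(A)$ is bounded below, since $\delta$ is arbitrary.

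\emph{Main obstacle: the base point.} The remaining issue is that $B=E_{k,n}/(1-\theta)$ and $E_{k,n'}$ do not have the same law, and the estimator may depend on the exact base point because $U$ is not a pure power. I would handle this as follows.

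First, fix a slowly vanishing window $\delta_k\to0$. By the binomial Chernoff bound in the proof of \cref{lem:intermediate-ld}, both $B$ and $E_{k,n'}$ lie within $\delta_k$ of $\log(n'/k)$ with high probability; the probability of the complement is at most of order $e^{-ck\delta_k^2}$.

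Next, condition on the base point. From $\P(\hat\alpha_{k,n'}>\alpha_0)\ge c$, the set $\mathcal B$ of base values $b$ in the window for which the conditional probability (over the spacings) is at least $c/2$ has $E_{k,n'}$-probability at least $c/2$.

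The final step is a density comparison. The densities of $E_{k,n}/(1-\theta)$ and of $E_{k,n'}$ are explicit Gamma/Beta-type order-statistic densities, whose local Cramér rate for deviation $x$ is $k(e^x-1-x)$. I would show that their ratio on the $\delta_k$-window is at least $e^{-O(k\delta_k^2)-O(\log k)}=e^{-o(k)}$, choosing $\delta_k$ with $k\delta_k^2=o(k)$ but $k\delta_k^2\to\infty$. This gives $\Q(B\in\mathcal B)\ge e^{-o(k)}$, hence $\Q(A)\ge e^{-o(k)}$. That cost is absorbed into the $o(k)$ term; the change-of-measure inequality must then be applied to $A\cap\{B\in\mathcal B\}$, so that the subtracted term is genuinely smaller than $\Q(A)$.

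I expect this density-comparison step to be the delicate part. If it fails, the fallback is to also tilt the lower $n-k$ exponentials by a further $o(k)$-cost change of measure, so that the two base points match exactly.
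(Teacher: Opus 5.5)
There is a genuine gap where you invoke the hypothesis at the sample size $n'=k(n/k)^{1/(1-\theta)}$. The estimator is a family $\hat\alpha_{k,n}$ that is allowed to depend on $n$. The event you must lower-bound, $A=\{\hat\alpha_{k,n}(X^0_{1:k,n})>\alpha_0\}$, is defined through the function $\hat\alpha_{k,n}$. Your coupling shows that, under $\mathbb{Q}$, the normalized $U_0$-sample of size $n$ looks like a normalized $U$-sample of size $n'$. So what you actually need is that the \emph{function} $\hat\alpha_{k,n}$ exceeds $\alpha_0$ with non-vanishing probability on $U$-samples whose base point sits near $\log(n'/k)$ rather than $\log(n/k)$.

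The bound $\P(\hat\alpha_{k,n'}(X_{1:k,n'})>\alpha_0)\ge c$ does not give this, for two reasons. It concerns a different function, $\hat\alpha_{k,n'}$. And the hypothesis is only assumed along the given sequence $k=k_n$, on which the pair $(k_n,n')$ generally does not lie.

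The approximation $\varphi(b+y)-\varphi(b)\approx y/\alpha$ at both base points does not rescue this. Since $\hat\alpha_{k,n}$ is an arbitrary measurable function of $k$ spacings, nothing prevents it from reacting to small, base-point-dependent distortions of the spacing profile. Your fallback fails as well. Matching $E_{k,n}/(1-\theta)$ with $E_{k,n'}$ is cheap, but it still leaves you with $\hat\alpha_{k,n'}$. Matching base points at the fixed size $n$ would instead require $E_{k,n}\lesssim(1-\theta)\log(n/k)$. By the binomial computation in \cref{lem:non-scale-invariant}, that costs roughly $k(n/k)^{\theta}\gg k$, not $o(k)$.

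Your route does go through, and even without NRV, for estimators that depend on $k$ only and under a hypothesis holding uniformly over intermediate pairs. That, however, is a different theorem.

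The missing idea is exactly where $U\in\mathcal{NRV}_{1/\alpha}$ enters; you state $\varphi'\to1/\alpha$ but never use it. Keep $n$ and the base point $t=E_{k,n}$ fixed. Replace your linear transport $S\mapsto S/(1-\theta)$ by the $t$-dependent increasing map $y\mapsto\rho(y;t)$ defined by $\varphi\bigl(\tfrac{t+\rho}{1-\theta}\bigr)-\varphi\bigl(\tfrac{t}{1-\theta}\bigr)=\varphi(t+y)-\varphi(t)$. By scale invariance, the $U_0$-sample with spacings $\rho(y_i;t)$ and the $U$-sample with spacings $y_i$ (same $n$, same $t$) give the same value of the same function $\hat\alpha_{k,n}$. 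The hypothesis therefore transfers directly.

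NRV gives $\rho'(y;t)\in(1-\theta)\bigl[\tfrac{1-\varepsilon'}{1+\varepsilon'},\tfrac{1+\varepsilon'}{1-\varepsilon'}\bigr]$, where $\varepsilon'=\sup_{s\ge t}|\alpha\varphi'(s)-1|\to0$ uniformly for $t\ge\log(n/k)-1$. Hence the change-of-variables factor satisfies $\prod_ie^{-\rho(y_i;t)}\rho'(y_i;t)\ge e^{-\sum_iy_i}\exp\bigl((k-1)\log(1-\theta)-o(k)+(\theta-O(\varepsilon'))\sum_iy_i\bigr)$. Restricting to $\{\sum_iy_i\ge k-k^{2/3}\}$, which keeps the hypothesis probability bounded below since $\P(\Gamma_{k-1}<k-k^{2/3})\to0$, this becomes $e^{-\sum_iy_i}e^{-kf(\theta)+o(k)}$.

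This is the paper's proof. Your exponential tilt with KL cost $f(\theta)$ is its special case for exact Pareto tails, where $\rho(y;t)=(1-\theta)y$ and the base point is irrelevant. (A minor point: under $\mathbb{Q}$, $\log(d\mathbb{Q}/d\P)$ concentrates at $+(k-1)f(\theta)$, not $-(k-1)f(\theta)$; your displayed inequality already uses the correct sign.)
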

\begin{proof}
    Define $Y_i=E_{i,n}-E_{k,n}$ for $i\in[k-1]$ and \[q_{k,n}(y_{1:k-1};t)=\hat\alpha_{k,n}(U(e^{t+y_1}),\dots,U(e^{t+y_{k-1}}),U(e^t)).\] 
    By assumption, we have
    \[
    \liminf_{n\gg k\gg 1}\P\left(q_{k,n}(Y_{1:k-1};E_{k,n})>(1-\theta)\alpha\right)>0.
    \]
    Moreover, notice that $\sum_{i=1}^{k-1}Y_i=\sum_{i=1}^{k-1}(E_{i,n}-E_{k,n})=\sum_{i=1}^{k-1}\sum_{j=i}^{k-1}\Delta_j/j=\sum_{j=1}^{k-1}\Delta_j=\Gamma_{k-1}$, so that by~\cref{lem:gamma-cramer}, 
    \[
        \P\left(\Gamma_{k-1}< k-k^{2/3}\right)\to0.
    \]
    Let us define the set
    \[
        \mathcal A_{k,n}(t)=\left\{y_1\ge\dots\ge y_{k-1}\ge0\ :\ q_{k,n}(y_{1:k-1};t)>(1-\theta)\alpha, \sum_{i=1}^{k-1}y_i\ge k-k^{2/3}\right\}.
    \]
    Then,
    \begin{align*}
    \P\left(Y_{1:k-1}\in\mathcal A_{k,n}(E_{k,n})\right)\ge{}& \P\left(q_{k,n}(Y_{1:k-1};E_{k,n})>(1-\theta)\alpha\right)\\
    &-\P\left(\Gamma_{k-1}< k-k^{2/3}\right),
    \end{align*}
    so that
    \[
    \liminf_{n\gg k\gg 1}\P\left(Y_{1:k-1}\in\mathcal A_{k,n}(E_{k,n})\right)>0.
    \]
    Next, define $X_{i,n}'=U(e^{E_{i,n}/(1-\theta)})$ and notice that $X_{i,n}'$ correspond to order statistics of a regularly varying distribution with exponent $(1-\theta)\alpha$. We will show that the decay rate of $\hat\alpha_{k,n}$ under this distribution is at most $k\cdot f(\theta)+o(k)$.

    Define $\rho(y;t)$ implicitly by
    \begin{equation}\label{eq:rho-def}
\frac{U\left(e^{(t+\rho(y;t))/(1-\theta)}\right)}{U\left(e^{t/(1-\theta)}\right)}=\frac{U(e^{t+y})}{U(e^t)}.
\end{equation}
    Since $U$ is nondecreasing with derivative $sU'(s)/U(s)\to\frac1\alpha>0$ due to normalized regular variation, there exists some $t_0$ so that $U'(s)>0$ for all $s\ge e^{t_0}$.
    Hence, for $t\ge t_0$, both $\rho\mapsto U\left(e^{(t+\rho)/(1-\theta)}\right)/U\left(e^{t/(1-\theta)}\right)$ and $z\mapsto
U\left(e^{t+z}\right)/U\left(e^{t}\right)$ are bijections from $[0,\infty)$ to $[1,\infty)$, so that $y\mapsto\rho(y;t)$ is a bijection from $[0,\infty)$ to $[0,\infty)$.
Define $\psi(t)=\log U(e^t)$, which is strictly increasing and eventually differentiable  with derivative $\psi'(t)=e^tU'(e^t)/U(e^t)\to\frac1\alpha$ due to the normalized regular variation of $U$.
\eqref{eq:rho-def} can be rewritten to
\begin{equation}\label{eq:rho-def-psi}
\psi\left(\frac{t+\rho(y;t)}{1-\theta}\right)-\psi\left(\frac{t}{1-\theta}\right)=\psi(t+y)-\psi(t).
\end{equation}
By taking the derivative of both sides of~\eqref{eq:rho-def-psi} with respect to $y$ for $t\ge t_0$, we find
\begin{equation}\label{eq:rho-derivative}
\rho'(y;t):=\frac{\partial\rho}{\partial y}(y;t)=(1-\theta)\frac{\psi'(t+y)}{\psi'\left(\frac{t+\rho(y;t)}{1-\theta}\right)}.
\end{equation}
Now, define
\[
\mathcal A_{k,n}'(t)=\left\{(\rho(y_1;t),\dots,\rho(y_{k-1};t))\ :\ (y_1,\dots,y_{k-1})\in\mathcal A_{k,n}(t)\right\}.
\]
By scale-invariance of $\hat\alpha_{k,n}$, we have
\begin{align*}
    &\hat\alpha_{k,n}\left(U(e^{(t+\rho(y_1;t))/(1-\theta)}),\dots,U(e^{(t+\rho(y_{k-1};t))/(1-\theta)}),U(e^{t/(1-\theta)})\right)\\
    &=\hat\alpha_{k,n}\left(\frac{U(e^{(t+\rho(y_1;t))/(1-\theta)})}{U(e^{t/(1-\theta)})},\dots,\frac{U(e^{(t+\rho(y_{k-1};t))/(1-\theta)})}{U(e^{t/(1-\theta)})},1\right)\\
    &=\hat\alpha_{k,n}\left(\frac{U(e^{t+y_1})}{U(e^t)},\dots,\frac{U(e^{t+y_{k-1}})}{U(e^t)},1\right)\\
    &=\hat\alpha_{k,n}\left(U(e^{t+y_1}),\dots,U(e^{t+y_{k-1}}),U(e^t)\right).
\end{align*}
Hence, $Y_{1:k-1}\in\mathcal A_{k,n}'(E_{k,n})$ implies $\hat\alpha_{k,n}(X_{1:k,n}')>(1-\theta)\alpha$, so that
\[
\P(\hat\alpha_{k,n}(X_{1:k,n}')>(1-\theta)\alpha)\ge\P(Y_{1:k-1}\in\mathcal A_{k,n}'(E_{k,n})).
\]
Notice that the density of $Y_{1:k-1}$ is
\[
f_Y(y_{1:k-1})=(k-1)!\exp\left(-\sum_{i=1}^{k-1}y_i\right),
\]
for $y_1\ge\dots\ge y_{k-1}\ge0$.
Using a coordinate transformation from $\mathcal A_{k,n}'(E_{k,n})$ to $\mathcal A_{k,n}(E_{k,n})$ via $\rho(y_i;E_{k,n})\mapsto y_i$, we can write
\begin{align*}
    &\P(Y_{1:k-1}\in\mathcal A_{k,n}'(t)\mid E_{k,n}=t)\\
    =&\int_{\mathcal A_{k,n}'(t)}f_Y(y'_{1:k-1})\,dy'_1\dots dy'_{k-1}\\
    =&\int_{\mathcal A_{k,n}(t)}f_Y(\rho(y_1;t),\dots,\rho(y_{k-1};t))\,d\rho(y_1;t)\dots d\rho(y_{k-1};t)\\
    =&\int_{\mathcal A_{k,n}(t)}f_Y(\rho(y_1;t),\dots,\rho(y_{k-1};t))\,\prod_{i=1}^{k-1}\rho'(y_i;t)\,dy_1\dots dy_{k-1}\\
    =&(k-1)!\int_{\mathcal A_{k,n}(t)}\prod_{i=1}^{k-1}e^{-\rho(y_i;t)}\rho'(y_i;t)\,dy_1\dots dy_{k-1}.
\end{align*}
Since $\psi'(t)\to\frac1\alpha$ as $t\to\infty$, the quantity $\varepsilon'(t):=\sup_{s\ge t}|\alpha\psi'(s)-1|$ satisfies $\varepsilon'(t)\to0$ as $t\to\infty$.
From~\eqref{eq:rho-derivative}, we get
\[
(1-\theta)\frac{1-\varepsilon'}{1+\varepsilon'}\le \rho'(y;t)\le (1-\theta)\frac{1+\varepsilon'}{1-\varepsilon'},
\]
for $\varepsilon'=\varepsilon'(t)$. This leads to
\[
    \rho(y;t)=\int_0^y\rho'(z;t)dz\le (1-\theta)\frac{1+\varepsilon'}{1-\varepsilon'}y.
\]
Together, this yields
  \begin{align*}
  e^{-\rho(y;t)}\rho'(y;t)
  &\ge (1-\theta)\frac{1-\varepsilon'}{1+\varepsilon'}\exp\left(-(1-\theta)\frac{1+\varepsilon'}{1-\varepsilon'}y\right)\\
  &=\exp\left(-y+\theta y+\log(1-\theta)+\log\frac{1-\varepsilon'}{1+\varepsilon'}-\frac{2(1-\theta)\varepsilon' y}{1-\varepsilon'}\right).
  \end{align*}
  The $-y$ in the exponent will be absorbed in $f_Y$. 
  To lower-bound the other occurrences of $y$, we use that $y_{1:k-1}\in\mathcal A_{k,n}(t)$ satisfies $\sum_{i=1}^{k-1}y_i\ge k-k^{2/3}$, and consider $t$ large enough so that
  \begin{equation}\label{eq:eps-prime-small}
    \theta-\frac{2(1-\theta)\varepsilon'(t)}{1-\varepsilon'(t)}>0.
  \end{equation}
  This allows us to bound
  \begin{align*}
    \prod_{i=1}^{k-1}e^{-\rho(y_i;t)}\rho'(y_i;t)
    \ge e^{c_k(\theta,\varepsilon')-\sum_{i=1}^{k-1}y_i},
  \end{align*}
  where
  \[
    c_k(\theta,\varepsilon')=(k-1)\left(\log(1-\theta)+\log\frac{1-\varepsilon'}{1+\varepsilon'}\right)+\left(\theta-\frac{2(1-\theta)\varepsilon'}{1-\varepsilon'}\right)(k-k^{2/3}).
  \]
  This leads to
  \begin{align*}
  \P(Y_{1:k-1}\in\mathcal A_{k,n}'(t)\mid E_{k,n}=t)
  &\ge e^{c_k(\theta,\varepsilon'(t))}\int_{\mathcal A_{k,n}(t)}f_Y(y_{1:k-1})\,dy_1\dots dy_{k-1}\\
  &=e^{c_k(\theta,\varepsilon'(t))}\P(Y_{1:k-1}\in\mathcal A_{k,n}(t)).
  \end{align*}
  Define $t_k=\log(n/k)-1$ so that $\P(E_{k,n}<t_k)\to0$ and~\eqref{eq:eps-prime-small} holds eventually. We can write
  \begin{align}
    &\P(\hat\alpha_{k,n}(X_{1:k,n}')>(1-\theta)\alpha)\nonumber\\
    \ge &\P(Y_{1:k-1}\in\mathcal A_{k,n}'(E_{k,n}),E_{k,n}\ge t_k)\nonumber\\
    =&\P(E_{k,n}\ge t_k)\,\E\left[\P(Y_{1:k-1}\in\mathcal A_{k,n}'(E_{k,n})\mid E_{k,n}) \middle|\ E_{k,n}\ge t_k\right]\nonumber\\
    \ge&\P(E_{k,n}\ge t_k)\,\E\left[e^{c_k(\theta,\varepsilon'(E_{k,n}))}\P(Y_{1:k-1}\in\mathcal A_{k,n}(E_{k,n})) \middle|\ E_{k,n}\ge t_k\right]\nonumber\\
    \ge &\P(E_{k,n}\ge t_k)\,e^{c_k(\theta,\varepsilon'(t_k))}\P(Y_{1:k-1}\in\mathcal A_{k,n}(E_{k,n}) \mid E_{k,n}\ge t_k)\label{eq:ckn-dec}\\
    \ge &\P(E_{k,n}\ge t_k)\,e^{c_k(\theta,\varepsilon'(t_k))}\left(\P(Y_{1:k-1}\in\mathcal A_{k,n}(E_{k,n}))-\P(E_{k,n}<t_k)\right)\nonumber.
  \end{align}
Here, the second inequality uses the lower bound on $\P(Y_{1:k-1}\in\mathcal A_{k,n}'(t)\mid E_{k,n}=t)$ derived above, which is valid for $t\ge t_k$ since~\eqref{eq:eps-prime-small} holds for such $t$, and~\eqref{eq:ckn-dec} follows from the fact that $c_k(\theta,\varepsilon')$ is decreasing in $\varepsilon'$, and $\varepsilon'(E_{k,n})\le \varepsilon'(t_k)$ for $E_{k,n}\ge t_k$.
Since $\varepsilon'(t_k)\to0$ as $k\to\infty$, we have $c_k(\theta,\varepsilon'(t_k))= -kf(\theta)+o(k)$. Moreover, since $\P(Y_{1:k-1}\in\mathcal A_{k,n}(E_{k,n}))$ is bounded away from zero and $\P(E_{k,n}<t_k)\to0$, the last factor is bounded away from zero, as is $\P(E_{k,n}\ge t_k)\to1$, so that their product equals $e^{o(k)}$. We conclude that
\[
  \P(\hat\alpha_{k,n}(X_{1:k,n}')>(1-\theta)\alpha)\ge \exp\left(-kf(\theta)+o(k)\right),
\]
which is exactly the statement that the decay rate of $\hat\alpha_{k,n}$ is at most $kf(\theta)+o(k)$.
\end{proof}
Notice that \cref{thm:hill-optimality} does not restrict \cref{def:conservative} to normalized regularly varying distributions, but rather shows that if $\hat\alpha_{k,n}$ achieves a bias level $\theta$ for any NRV distribution, then the decay rate over all regularly varying distributions is at most $kf(\theta)+o(k)$.

In the proof of~\cref{thm:hill-optimality}, we used normalized regular variation to ensure that $q_{k,n}(y_1,\dots,y_{k-1};t)$ is sufficiently smooth for large $t$, which allowed us to lower-bound $\P(\hat\alpha_{k,n}(X_{1:k,n}')>(1-\theta)\alpha)$ in terms of $\P(\hat\alpha_{k,n}(X_{1:k,n})>(1-\theta)\alpha)$. Instead of assuming normalized regular variation, it may also be possible to achieve this same smoothness of $q_{k,n}(y_1,\dots,y_{k-1};t)$ by imposing additional smoothness conditions on $\hat\alpha_{k,n}$.

The next example shows that the optimality result of~\cref{thm:hill-optimality} does not hold for estimators that are not scale-invariant. 
\begin{lemma}\label{lem:non-scale-invariant}
    Let $0<\varepsilon<\theta<1$. Consider the estimator
\begin{equation}\label{eq:non-scale-invariant}
    \hat\beta_{k,n}=(1-\varepsilon)\frac{\log(n/k)}{\log X_{k,n}}.
\end{equation} 
Then $\P(\hat\beta_{k,n}(X_{1:k,n})>(1-\theta)\alpha)\to1$, while
\[
-\log\P(\hat\beta_{k,n}(X_{1:k,n})>\alpha)\gg kf(\theta).
\]
\end{lemma}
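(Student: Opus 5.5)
We work entirely through the coupling \eqref{eq:exp-spacing-coupling}, i.e.\ $X_{k,n}=U(e^{E_{k,n}})$, and reduce both claims to statements about $E_{k,n}$. Since $U\in\mathcal{RV}'_{1/\alpha}$, we have $\log U(s)/\log s\to1/\alpha$ as $s\to\infty$. This follows from the Potter bounds, or directly from \cref{lem:potter-rewritten} applied with a fixed reference point. Hence, on events where $E_{k,n}\to\infty$,
\[
\log X_{k,n}=\frac{E_{k,n}}{\alpha}(1+o(1)).
\]
Therefore $\hat\beta_{k,n}=(1-\varepsilon)\alpha\frac{\log(n/k)}{E_{k,n}}(1+o(1))$ on such events. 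All claims then reduce to the behaviour of the ratio $E_{k,n}/\log(n/k)$.

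\textbf{Consistency part.} It is classical that $E_{k,n}-\log(n/k)=O_\P(k^{-1/2})$. This can also be seen from the binomial representation used in \cref{lem:intermediate-ld}, applied on both sides with fixed $\eta$. Consequently $E_{k,n}/\log(n/k)\to1$ in probability, using $\log(n/k)\to\infty$, and so $\hat\beta_{k,n}\to(1-\varepsilon)\alpha$ in probability. Since $(1-\varepsilon)\alpha>(1-\theta)\alpha$, this gives $\P(\hat\beta_{k,n}>(1-\theta)\alpha)\to1$.

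\textbf{Decay part.} Fix a small $\delta>0$ such that $(1-\varepsilon)(1+\delta)/(1-\delta)<1$. By Potter, for all sufficiently large $s$ we have $\log U(s)\ge(1-\delta)\log s/\alpha$; note this needs $U(s)>1$, which holds eventually. On the event $\{E_{k,n}\ge(1-\delta)\log(n/k)\}$, with $n/k$ large, we then get $\log X_{k,n}\ge(1-\delta)^2\log(n/k)/\alpha$, and hence
\[
\hat\beta_{k,n}\le\frac{(1-\varepsilon)}{(1-\delta)^2}\,\alpha<\alpha
\]
after shrinking $\delta$ appropriately. It follows that
\[
\P(\hat\beta_{k,n}>\alpha)\le\P\bigl(E_{k,n}<(1-\delta)\log(n/k)\bigr).
\]
The event on the right says that fewer than $k$ of $n$ i.i.d.\ exponentials exceed $(1-\delta)\log(n/k)$. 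Each exceeds this level with probability $p=(k/n)^{1-\delta}=k\eta_n/n$, where $\eta_n=(n/k)^{\delta}\to\infty$. The Chernoff bound from the proof of \cref{lem:intermediate-ld} gives the exponent
\[
nD(k/n\,\|\,\eta_nk/n)\approx k(\eta_n-1-\log\eta_n).
\]
This should be checked carefully when $\eta_nk/n$ is not small; there, use $D(x\|y)\ge x\log(x/y)+(1-x)\log\frac{1-x}{1-y}$ directly and note $-\log(1-y)\ge y$. Since $\eta_n\to\infty$, the exponent is $\gg k\gg kf(\theta)$, which is the claim.

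\textbf{Main obstacle.} The expected obstacle is the handling of $U$ at moderate arguments. If $U(s)\le1$, then $\log X_{k,n}\le0$, and the convention for reciprocals matters. This is harmless on the good event, since there $E_{k,n}\to\infty$ deterministically. A second delicate point is the KL estimate when $\eta_n k/n$ does not tend to $0$. If $(k/n)^{1-\delta}\to1$ is impossible in some regime, we simply cap $\eta_n$ at a slowly growing level, which still yields a superlinear rate.
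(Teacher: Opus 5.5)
Your proposal is correct and follows the paper's proof essentially step for step. Consistency comes from $\log X_{k,n}/\log(n/k)\to1/\alpha$ in probability; the decay part uses the inclusion $\{\hat\beta_{k,n}>\alpha\}\subseteq\{E_{k,n}<c\log(n/k)\}$ with $c=1-\delta<1$ and a binomial Chernoff bound with success probability $(k/n)^{c}$, giving an exponent of order $k(n/k)^{1-c}\gg k$. Your worry about $\eta_nk/n$ not being small is unfounded, since $\eta_nk/n=(k/n)^{1-\delta}\to0$ automatically when $k/n\to0$, so no capping of $\eta_n$ is needed.
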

\begin{proof}
    The first claim follows from $\hat\beta_{k,n}\stackrel\P\to(1-\varepsilon)\alpha$, as $\log X_{k,n}\stackrel\P\sim\log U(n/k)\sim\alpha^{-1}\log(n/k)$.
    For the second claim, write $L_n:=\log(n/k)$ and fix any $c\in(1-\varepsilon,1)$. Since $U\in\mathcal{RV}'_{1/\alpha}$, we have $\log U(t)\sim\alpha^{-1}\log t$ as $t\to\infty$, so that
    \[
    \log U\left(e^{cL_n}\right)\sim\frac c\alpha L_n>\frac{1-\varepsilon}\alpha L_n
    \]
    for all sufficiently large $n,k$. Since $X_{k,n}=U(e^{E_{k,n}})$ and $U$ is non-decreasing, $E_{k,n}\ge cL_n$ implies $\log X_{k,n}\ge\log U(e^{cL_n})>\frac{1-\varepsilon}\alpha L_n$ for all sufficiently large $n,k$. Consequently,
    \[
    \{\hat\beta_{k,n}>\alpha\}=\left\{\log X_{k,n}<\frac{1-\varepsilon}\alpha L_n\right\}\subseteq\{E_{k,n}<cL_n\},
    \]
    where the equality uses the convention $c/T:=+\infty$ on $\{T\le0\}$ for reciprocal estimators.
    The event $\{E_{k,n}<cL_n\}$ means that fewer than $k$ out of $n$ independent standard exponentials exceed $cL_n$, which happens with probability $p_n:=e^{-cL_n}=(k/n)^c$ each. Hence, by the Chernoff--Hoeffding inequality,
    \[
    \P(\hat\beta_{k,n}>\alpha)\le\P\left(\text{Bin}(n,p_n)<k\right)\le\exp\left(-nD\left(\frac kn\middle\|p_n\right)\right),
    \]
    where $D(a\|b)=a\log(a/b)+(1-a)\log((1-a)/(1-b))$ is the Kullback--Leibler divergence. Since $c<1$, we have $k/n\ll p_n\ll1$, and in this regime $D(k/n\|p_n)\sim p_n$. Therefore,
    \[
    -\log\P(\hat\beta_{k,n}>\alpha)\ge nD\left(\frac kn\middle\|p_n\right)\sim np_n=k(n/k)^{1-c}\gg k,
    \]
    which indeed grows at a faster rate than $kf(\theta)$ for any fixed $\theta\in(\varepsilon,1)$.
\end{proof}

The following result is a corollary to~\cref{thm:hill-optimality} and highlights that within the class of scale-invariant estimators, a decay rate $\lambda_k=\Theta(k)$ comes at the cost of consistency.
\begin{corollary}\label{cor:conservativeness-consistency}
    Consider a scale-invariant estimator $\hat\alpha_{k,n}$ that is a function of the $k$ largest observations $X_{1:k,n}$ from a normalized regularly-varying distribution. If $\hat\alpha_{k,n}$ achieves a decay rate $\lambda_k=\Theta(k),$ then $\hat\alpha_{k,n}$ cannot be a consistent estimator for $\alpha$.
\end{corollary}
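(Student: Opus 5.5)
We argue by contradiction, using \cref{thm:hill-optimality}. Suppose $\hat\alpha_{k,n}$ is scale-invariant, is a function of $X_{1:k,n}$, achieves a decay rate $\lambda_k=\Theta(k)$, and is consistent, i.e.\ $\hat\alpha_{k,n}\stackrel\P\to\alpha$ for every normalized regularly varying $U$. Write $c:=\liminf_k\lambda_k/k>0$.

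Since $f$ is continuous on $[0,1)$ with $f(0)=0$ and $f$ strictly increasing on $[0,1)$, we can fix $\theta\in(0,1)$ small enough that $f(\theta)<c$. Now take any $\alpha>0$ and $U\in\mathcal{NRV}_{1/\alpha}$, for instance the Pareto quantile $U(t)=t^{1/\alpha}$. Consistency gives
\[
\P\bigl(\hat\alpha_{k,n}(X_{1:k,n})>(1-\theta)\alpha\bigr)\to1,
\]
so the liminf hypothesis of \cref{thm:hill-optimality} holds with this $\theta$. That theorem then yields a regularly varying distribution (namely the one with exponent $(1-\theta)\alpha$ built in its proof) under which
\[
-\log\P(\hat\alpha_{k,n}>\text{true index})\le kf(\theta)+o(k).
\]
Dividing by $\lambda_k$ gives
\[
\liminf_k\frac{-1}{\lambda_k}\log\P(\cdots)\le\limsup_k\frac{kf(\theta)+o(k)}{\lambda_k}\le \frac{f(\theta)}{c}<1.
\]
This contradicts the conservativeness requirement \eqref{eq:conservative-single} at rate $\lambda_k$, which is implied by achieving that rate and which must hold for every regularly varying distribution. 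Hence $\hat\alpha_{k,n}$ cannot be consistent.

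The only delicate point is the quantifier matching. \cref{def:conservative} is required for every $U\in\mathcal{RV}'$, so the distribution produced by \cref{thm:hill-optimality}, which need not itself be NRV, is a legitimate witness. Consistency is only needed under one NRV distribution, so the statement's restriction to NRV data is harmless. Because $\theta$ can be taken arbitrarily small, this argument also shows more generally that $\lambda_k=o(k)$ is forced for consistent scale-invariant estimators.
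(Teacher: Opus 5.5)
Your proposal is correct and follows the paper's proof essentially verbatim. Like the paper, you choose $\theta$ with $f(\theta)<\liminf_k\lambda_k/k$, use consistency to satisfy the hypothesis of \cref{thm:hill-optimality}, and contradict the required decay rate with the resulting $kf(\theta)+o(k)$ bound; your explicit $\liminf/\limsup$ bookkeeping and the remark that the witness distribution only needs to lie in $\mathcal{RV}'$ simply spell out what the paper leaves implicit.
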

\begin{proof}
    By assumption, there exists some $\theta>0$ such that $\liminf_{n\gg k\gg1}\lambda_k/k>f(\theta)$. If $\hat\alpha_{k,n}$ is consistent, then $\P(\hat\alpha_{k,n}(X_{1:k,n})>\alpha(1-\theta))\to1$ as $n\gg k\gg1$. But by~\cref{thm:hill-optimality}, this implies that the decay rate is at most $kf(\theta)+o(k)$, which contradicts the assumption that the decay rate is at least $\lambda_k$.
\end{proof}
\cref{cor:conservativeness-consistency} additionally tells us that a consistent scale-invariant estimator that is conservative with decay rate $1\ll \lambda_n\ll n$ needs to use at least $k\gg\lambda_n$ order statistics.

\section{Moderate deviations regime (\texorpdfstring{$\lambda_k\ll k$}{lambda\_k much smaller than k})}\label{sec:md}
To get a consistent estimator, we need $\theta_k\downarrow0$, which corresponds to the moderate deviations regime $\lambda_k\ll k$.
\cref{thm:main-hill} allows for $\theta_k\to0$ as long as $\theta_k\gg\varepsilon(n/(\eta k))$ for some $\eta>1$. In this section, we will show what speeds of $\theta_k$ are admissible for the moderate deviations regime, depending on the second-order condition of the underlying distribution.

Recall that the second-order condition~\eqref{eq:soc} comes with a second-order parameter $\rho\le0$.
\eqref{eq:hill-normality} suggests that $\alpha H_{k,n}-1=\bigO_\P(|A(n/k)|+k^{-1/2})$, so that moderate deviations require $\theta_k\gg |A(n/k)|+k^{-1/2}$.
We distinguish the cases $\rho<0$ and $\rho=0$. The case $\rho<0$ allows us to bound $\varepsilon(t)\le 2|\rho|^{-1}\sup_{t'\ge t}|A_0(t')|$, where $|A_0(t)|\sim|A(t)|$. This indeed gives us the desired moderate deviations result for $\theta_k\gg |A(n/k)|+k^{-1/2}$. The case $\rho=0$ is more challenging and requires a different approach. 

Our results are valid even when $\sqrt k |A(n/k)|\to\infty$, in contrast to the CLT result from Theorem~3.2.5 of~\citet{haan2006extreme}.
We additionally derive a moderate deviations result for a bias-corrected Hill estimator for $\rho<0$, which allows for deviations of size $\theta_k\asymp|A(n/k)|$ if $\sqrt k|A(n/k)|\to\infty$.

When deriving decay rates in the moderate deviations regime, we restrict~\cref{def:conservative} to second-order regularly varying distributions with a given rate function $A(t)$. That is, we say that $\hat\alpha_{k,n}$ is conservative with decay rate $\lambda_n$ among distributions with second-order rate $A(t)$ if~\eqref{eq:conservative-single} holds for any $\alpha>0$, $U\in2\mathcal{RV}_{1/\alpha,A}$ and $X_1,\dots,X_n\sim \text{RV}(U)$.

\subsection{The case \texorpdfstring{$\rho<0$}{rho<0}}

\begin{lemma}\label{lem:potter-soc}
    Suppose $U$ satisfies \eqref{eq:soc} for some $A(t)$ with $\rho<0$, then $\varepsilon(t)=\bigO(|A(t)|)$.
\end{lemma}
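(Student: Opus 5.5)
The plan is to use the uniform inequalities that come with the second-order condition when $\rho<0$ (Theorem~2.3.9 / Theorem~B.2.18 in \citet{haan2006extreme}). These give the Potter rate at scale $|A(t)|$ directly.

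\textbf{Step 1: a uniform second-order bound.} Since $\rho<0$, the limit function in \eqref{eq:soc} is $x^{1/\alpha}(x^\rho-1)/\rho$. Writing $\psi(s)=\log U(e^s)$, a log-version of \eqref{eq:soc} holds. The uniform bound I would invoke reads as follows: there is $A_0$ with $A_0(t)\sim A(t)$ such that for every $\delta>0$ there is $t_0$ with, for all $t\ge t_0$ and $x\ge1$,
\[
\left|\frac{U(tx)/U(t)-x^{1/\alpha}}{A_0(t)}-x^{1/\alpha}\frac{x^\rho-1}{\rho}\right|\le\delta x^{1/\alpha+\rho+\delta}.
\]
Choosing $\delta<|\rho|$ makes the right-hand side $\le \delta x^{1/\alpha}$. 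Hence
\[
\frac{U(tx)}{U(t)}=x^{1/\alpha}\bigl(1+A_0(t)R_t(x)\bigr),\qquad |R_t(x)|\le\frac{1}{|\rho|}+\delta\le\frac{2}{|\rho|}
\]
uniformly in $x\ge1$ and $t\ge t_0$.

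\textbf{Step 2: pass to pairs $x\ge y\ge t$.} Apply Step 1 with base point $y\ge t$ and ratio $x/y\ge1$. Set $a(t):=\sup_{t'\ge t}|A_0(t')|$. Then
\[
1-\tfrac{2}{|\rho|}a(t)\le\frac{U(x)/U(y)}{(x/y)^{1/\alpha}}\le1+\tfrac{2}{|\rho|}a(t).
\]
Since $(x/y)^{(1-\varepsilon)/\alpha}\le(x/y)^{1/\alpha}\le(x/y)^{(1+\varepsilon)/\alpha}$ for $x/y\ge1$, the defining inequalities of $\varepsilon(t)$ in \cref{lem:potter-rewritten} hold with $\varepsilon=2|\rho|^{-1}a(t)$. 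Thus $\varepsilon(t)\le 2|\rho|^{-1}\sup_{t'\ge t}|A_0(t')|$.

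\textbf{Step 3: replace the supremum by $|A(t)|$.} Because $|A_0|\in\mathcal{RV}_\rho$ with $\rho<0$, Theorem~1.5.3 of Bingham--Goldie--Teugels gives $\sup_{t'\ge t}|A_0(t')|\sim|A_0(t)|\sim|A(t)|$. Therefore $\varepsilon(t)=\bigO(|A(t)|)$.

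\textbf{Main obstacle.} The delicate point is the uniform bound in Step 1. Its $\delta$-remainder must be dominated uniformly over all $x\ge1$; for $\rho<0$ this works by taking $\delta<|\rho|$. The bound is stated with an auxiliary $A_0$ rather than $A$, and the limit has to be transferred between the ratio form and the log form. If quoting the theorem directly is awkward, I would instead derive the bound via the log-version, which is the standard Drees-type inequality for $\log U$, and then exponentiate using $|A_0(t)|\to0$.
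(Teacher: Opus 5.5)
Your proposal is correct and follows essentially the same route as the paper: you apply the uniform second-order inequality of Theorem~2.3.9 in \citet{haan2006extreme} with $\delta<|\rho|$, which bounds the remainder by a constant times $|A_0(t)|$ uniformly in $x\ge1$, then use base point $y$ to get $\varepsilon(t)\le 2|\rho|^{-1}\sup_{t'\ge t}|A_0(t')|$, and conclude with the uniform convergence theorem for $\mathcal{RV}_\rho$, $\rho<0$ (the paper instead takes $\delta=-\rho/2$ and $\varepsilon=|\rho|^{-1}$ and computes the bounding function $\beta(y)$ explicitly). One small nit: the step $|\rho|^{-1}+\delta\le 2|\rho|^{-1}$ also needs $\delta\le|\rho|^{-1}$, so you should take $\delta<\min\{|\rho|,|\rho|^{-1}\}$.
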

\begin{proof}
    We rely on Theorem~2.3.9 of~\citet{haan2006extreme}, which states that if $U$ satisfies \eqref{eq:soc}, then for any $\varepsilon,\delta>0$, there exist $t_0=t_0(\varepsilon,\delta)>1$ such that for all $t,tx\ge t_0$,
    \begin{equation}\label{eq:haan-soc}
    \left|\frac{\frac{U(tx)}{U(t)}-x^{\frac1\alpha}}{A_0(t)}-x^{\frac1\alpha}\int_1^xu^{\rho-1}du\right|\le\varepsilon x^{\frac1\alpha+\rho}\max\{x^\delta,x^{-\delta}\},
    \end{equation}
    for some $A_0(t)\to0$ that is regularly varying\footnote{\citet{haan2006extreme} gives an explicit expression for $A_0(t)$, which is not needed in this proof.} with index $\rho\le 0$.
    For $\rho<0$, we can pick $\delta=-\rho/2>0$, $\varepsilon=-\rho^{-1}>0$ and $x=e^y\ge1$. Then, $\int_1^{e^y}u^{\rho-1}du=\frac{e^{\rho y}-1}\rho\ge0$ and $\max\{x^{\delta},x^{-\delta}\}=e^{-\rho y/2}$.
    We rewrite 
    \[
    \left|\frac{\frac{U(tx)}{U(t)}-x^{\frac1\alpha}}{A_0(t)}-x^{\frac1\alpha}\int_1^xu^{\rho-1}du\right|=\frac{\left|\frac{U(tx)}{U(t)}-x^{\frac1\alpha}-A_0(t)x^{\frac1\alpha}\frac{e^{\rho y}-1}\rho\right|}{|A_0(t)|},
    \]
    and multiply both sides of~\eqref{eq:haan-soc} by $|A_0(t)|$ and rearrange to get
\begin{align*}
A_0(t)e^{y/\alpha}\frac{e^{\rho y}-1}\rho-|A_0(t)|\varepsilon e^{y/\alpha}e^{\rho y/2}\ &\le\ \frac{U(te^y)}{U(t)}-e^{y/\alpha}\\
&\le\ A_0(t)e^{y/\alpha}\frac{e^{\rho y}-1}\rho+|A_0(t)|\varepsilon e^{y/\alpha}e^{\rho y/2},
\end{align*}
for $t\ge t_0$.
Taking the worst case over $|A_0(t)|\le |A_0(t)|$ ($A_0(t)=-|A_0(t)|$ for the lower bound, $A_0(t)=|A_0(t)|$ for the upper bound) gives
\begin{equation}\label{eq:haan-soc-rewritten2}
\left(1-|A_0(t)|\beta(y)\right)e^{y/\alpha}\ \le\ \frac{U(te^y)}{U(t)}\ \le\ \left(1+|A_0(t)|\beta(y)\right)e^{y/\alpha},
\end{equation}
for $\beta(y):=\frac{e^{\rho y}-1}\rho-\rho^{-1}e^{\rho y/2}$.
Writing $w=e^{\rho y/2}\in(0,1]$, we have $\beta(y)=-\rho^{-1}(1+w-w^2)$. Since $1+w-w^2\in[1,\tfrac54]$ for $w\in(0,1]$, this gives $\beta(y)\in\left[-\rho^{-1},-\tfrac54\rho^{-1}\right]\subset(0,-2\rho^{-1})$, so $\beta(y)$ is bounded and positive, uniformly in $y\ge0$. Since $|A_0(t)|\to0$, there exists a $t_0'$ so that $-2\rho^{-1}|A_0(t)|<1$ for all $t\ge t_0'$. Combining this with~\eqref{eq:haan-soc-rewritten2} and $\beta(y)<-2\rho^{-1}$, we obtain, for all $y\ge0$ and $t\ge\max\{t_0,t_0'\}$,
\[
\left(1+2\rho^{-1}|A_0(t)|\right)e^{y/\alpha}\le\frac{U(te^y)}{U(t)}\le\left(1-2\rho^{-1}|A_0(t)|\right)e^{y/\alpha}.
\]
By additionally bounding
\[
\left(1-2\rho^{-1}|A_0(t)|\right)e^{y/\alpha}\le \left(1-2\rho^{-1}|A_0(t)|\right)e^{(1-2\rho^{-1}|A_0(t)|)y/\alpha},
\]
and
\[
\left(1+2\rho^{-1}|A_0(t)|\right)e^{y/\alpha}\ge \left(1+2\rho^{-1}|A_0(t)|\right)e^{(1+2\rho^{-1}|A_0(t)|)y/\alpha},
\]
and noticing that these bounds hold for all $y\ge0$ and $t\ge\max\{t_0,t_0'\}$, we conclude that $\varepsilon(t)\le-2\rho^{-1}\sup_{t'\ge t}|A_0(t')|$.
Since $A_0(t)\sim A(t)$ and $|A|\in\mathcal{RV}_\rho$ with $\rho<0$, the uniform convergence theorem for regularly varying functions with negative index gives $\sup_{t'\ge t}|A_0(t')|\sim |A(t)|$, so that $\varepsilon(t)=\bigO(|A(t)|)$.
\end{proof}
\cref{lem:potter-soc} allows us to replace the condition $\theta_k\gg\varepsilon(n/(\eta k))$ in~\cref{thm:main-hill} with the more explicit condition $\theta_k\gg |A(n/k)|$ for $\rho<0$: fixing any $\eta>1$, \cref{lem:potter-soc} gives $\varepsilon(n/(\eta k))=\bigO(|A(n/(\eta k))|)$, and since $A$ is regularly varying with index $\rho$, $|A(n/(\eta k))|=\Theta(|A(n/k)|)$. This gives the following corollary:

\begin{corollary}\label{cor:hill-rho-neg}
    If $U$ satisfies~\eqref{eq:soc} with rate $A(t)$ and $\rho<0$, then for any sequence $1\gg\theta_k\gg |A(n/k)|+k^{-1/2}$, we have
    \[
        -\log\P(\alpha H_{k,n}>1+\theta_k)\sim \frac{k}{2}\theta_k^2\qquad\text{and}\qquad -\log\P(\alpha H_{k,n}<1-\theta_k)\sim \frac{k}{2}\theta_k^2,
    \]
    as $n\gg k\gg 1$.
\end{corollary}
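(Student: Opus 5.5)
The plan is to reduce \cref{cor:hill-rho-neg} to \cref{thm:main-hill} and then replace the rate $kf(\pm\theta_k)$ by its quadratic approximation. First, fix any $\eta>1$, say $\eta=2$. By \cref{lem:potter-soc}, there exist $C<\infty$ and $t_1$ such that $\varepsilon(t)\le C|A(t)|$ for all $t\ge t_1$. Since $n/(\eta k)\to\infty$, this gives
\[
\varepsilon(n/(\eta k))\le C|A(n/(\eta k))|
\]
eventually. Because $|A|\in\mathcal{RV}_\rho$, the uniform convergence theorem yields $|A(n/(\eta k))|/|A(n/k)|\to\eta^{-\rho}$, so $\varepsilon(n/(\eta k))=\bigO(|A(n/k)|)$. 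The hypothesis $\theta_k\gg|A(n/k)|+k^{-1/2}$ then implies $\theta_k\gg\varepsilon(n/(\eta k))+k^{-1/2}$.

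Next, I verify the remaining hypotheses of \cref{thm:main-hill}. The assumption $\theta_k\ll1$ means $\theta_k\to0$, so $\theta_k=\bigO(1)$ holds and $\limsup_k\theta_k=0<1$. Therefore \cref{thm:main-hill} gives both
\[
-\log\P(\alpha H_{k,n}>1+\theta_k)\sim kf(-\theta_k)
\qquad\text{and}\qquad
-\log\P(\alpha H_{k,n}<1-\theta_k)\sim kf(\theta_k).
\]

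Finally, I convert the rate. From $f(\theta)=-\theta-\log(1-\theta)=\tfrac12\theta^2+\bigO(\theta^3)$, valid for $\theta$ of either sign near $0$, and $\theta_k\to0$, we get $kf(\pm\theta_k)\sim\tfrac k2\theta_k^2$. Since $\sim$ is transitive, both claimed asymptotics follow.

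The only mildly delicate point is the first step. \cref{lem:potter-soc} bounds $\varepsilon(t)$ by $|A(t)|$ evaluated at the same argument, whereas \cref{thm:main-hill} needs a bound at $n/(\eta k)$ in terms of $A(n/k)$. Regular variation of $|A|$ handles this shift. There is no real obstacle beyond that: the uniform lower-bound issue in the moderate-deviations regime is already absorbed into the proof of \cref{thm:lower-bounds} through \cref{cor:gamma-cramer-md}.
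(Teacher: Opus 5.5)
Your proposal is correct and matches the paper's own argument. You fix $\eta>1$ and use \cref{lem:potter-soc} together with $|A(n/(\eta k))|=\Theta(|A(n/k)|)$, which follows directly from $A\in\mathcal{RV}_\rho$ at the fixed point $x=1/\eta$, so the uniform convergence theorem is not even needed; this gives $\theta_k\gg\varepsilon(n/(\eta k))+k^{-1/2}$, after which you apply \cref{thm:main-hill} and convert via $f(\pm\theta_k)\sim\tfrac12\theta_k^2$, with the needed uniformity as $\theta_k\downarrow0$ already supplied inside \cref{thm:lower-bounds} by \cref{cor:gamma-cramer-md}.
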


\subsection{The bias-corrected Hill estimator}\label{sec:md-bias-corrected}
\cref{cor:hill-rho-neg} requires $\theta_k$ to asymptotically \emph{dominate} the bias term $A(n/k)$. This is because \cref{lem:potter-soc} only controls the \emph{magnitude} $\varepsilon(t)=\bigO(|A(t)|)$ of the second-order remainder, discarding its sign. 
The second-order rate $A(n/k)$ specifies the leading term of the bias of the Hill estimator.
In this section, we provide the moderate deviations of the \emph{bias-corrected Hill estimator}
\begin{equation}\label{eq:bias-corrected-hill}
\tilde H_{k,n} := H_{k,n} - \frac{A(n/k)}{1-\rho},
\end{equation}
which allows $\theta_k$ to be of the same order as $A(n/k)$.
Write $q_\rho(y):=\frac{e^{\rho y}-1}\rho=\int_1^{e^y}u^{\rho-1}du$ for $y\ge0$. Since $\rho<0$,
\begin{equation}\label{eq:qrho-bounds}
0\le q_\rho(y)\le\frac1{|\rho|},\qquad \E[q_\rho(\Delta)]=\frac1{1-\rho}=:\mu_\rho,\qquad \Delta\sim\text{Exp}(1),
\end{equation}
where the mean identity follows from $\E[e^{\rho\Delta}]=(1-\rho)^{-1}$.

We first extract a \emph{signed} refinement of the uniform second-order theorem, keeping track of the sign of $A_0(t)$ rather than discarding it as in the proof of \cref{lem:potter-soc}.

\begin{lemma}\label{lem:signed-soc}
    Suppose $U$ satisfies~\eqref{eq:soc} with $\rho<0$, and let $A_0$ be as in~\eqref{eq:haan-soc} (and Theorem~2.3.9 of~\citet{haan2006extreme}). For every $\varepsilon\in(0,1]$ there is some $t_0(\varepsilon)<\infty$ such that, for all $t\ge t_0(\varepsilon)$ and $y\ge0$,
    \[
    \left|\alpha\log\frac{U(te^y)}{U(t)}-y-\alpha A_0(t)q_\rho(y)\right|\le \alpha\varepsilon|A_0(t)|+\alpha(1+|\rho|^{-1})^2A_0(t)^2.
    \]
\end{lemma}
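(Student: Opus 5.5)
The plan is to start from the uniform second-order bound~\eqref{eq:haan-soc} (Theorem~2.3.9 of~\citet{haan2006extreme}), this time keeping the sign of $A_0(t)$, and then pass to logarithms with a second-order Taylor bound. Fix $\varepsilon\in(0,1]$. We apply~\eqref{eq:haan-soc} with tolerance $\varepsilon'=\varepsilon|\rho|/(2|\rho|+\dots)$ (some fixed multiple of $\varepsilon$, tuned at the end) and with $\delta=-\rho/2$, at $x=e^y$, $y\ge0$. As in the proof of \cref{lem:potter-soc}, this gives, for $t\ge t_0$,
\[
\frac{U(te^y)}{U(t)}=e^{y/\alpha}\bigl(1+A_0(t)q_\rho(y)+R(t,y)\bigr),\qquad |R(t,y)|\le\varepsilon'|A_0(t)|e^{\rho y/2}\le\varepsilon'|A_0(t)|.
\]
Here we used $x^{1/\alpha+\rho}x^{\delta}=e^{y/\alpha}e^{\rho y/2}$.

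Next we take logarithms and multiply by $\alpha$:
\[
\alpha\log\frac{U(te^y)}{U(t)}-y=\alpha\log(1+z),\qquad z:=A_0(t)q_\rho(y)+R(t,y).
\]
By~\eqref{eq:qrho-bounds}, $|z|\le|A_0(t)|(|\rho|^{-1}+\varepsilon')\le(1+|\rho|^{-1})|A_0(t)|$ uniformly in $y\ge0$. Since $A_0(t)\to0$, we can enlarge $t_0$ so that $|z|\le\tfrac12$. On $|z|\le\tfrac12$ the elementary bound $|\log(1+z)-z|\le z^2$ holds, because $|\log(1+z)-z|\le \frac{z^2}{2(1-|z|)}$. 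Hence
\[
\bigl|\alpha\log(1+z)-\alpha A_0(t)q_\rho(y)\bigr|\le\alpha|R(t,y)|+\alpha z^2\le \alpha\varepsilon'|A_0(t)|+\alpha(1+|\rho|^{-1})^2A_0(t)^2.
\]
Choosing $\varepsilon'\le\varepsilon$ (simply take $\varepsilon'=\varepsilon$ in~\eqref{eq:haan-soc}) gives the claim, with $t_0(\varepsilon)$ the maximum of the threshold from~\eqref{eq:haan-soc} and the one ensuring $|z|\le\frac12$.

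The main point of care, rather than a real obstacle, is uniformity in $y\ge0$. Two facts handle it. First, $\rho<0$ makes the error factor $e^{\rho y/2}\le1$, so $\delta=-\rho/2$ leaves a bounded multiplier. Second, $q_\rho$ is bounded by $|\rho|^{-1}$, so the quadratic Taylor remainder is uniform in $y$. We also need~\eqref{eq:haan-soc} to hold for all $x\ge1$ once $t\ge t_0$, which it does since $tx\ge t\ge t_0$.
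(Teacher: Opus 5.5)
Your proof is correct and follows the paper's argument essentially step for step. You apply the uniform second-order bound with $\delta=-\rho/2$ at $x=e^y$, and write the relative error as $z$ (the paper's $v$). You then bound $|z|\le(1+|\rho|^{-1})|A_0(t)|\le\tfrac12$ for large $t$ and conclude with $|\log(1+z)-z|\le z^2$ and the triangle inequality. The preliminary tuning of $\varepsilon'$ is unnecessary, as you note yourself: taking $\varepsilon'=\varepsilon$ directly is what the paper does.
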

\begin{proof}
    Fix $\delta:=-\rho/2>0$ and apply~\eqref{eq:haan-soc} with $x=e^y\ge1$. Since $\int_1^{e^y}u^{\rho-1}du=q_\rho(y)$ and, because $\delta>0$ and $x\ge1$, $\max\{x^\delta,x^{-\delta}\}=e^{\delta y}$, multiplying with $|A_0(t)|$ gives, for $t\ge t_0(\varepsilon,\delta)$,
    \[
    \left|\frac{U(te^y)}{U(t)}-e^{y/\alpha}-A_0(t)e^{y/\alpha}q_\rho(y)\right|\le\varepsilon|A_0(t)|e^{y/\alpha}e^{(\rho+\delta)y}\le\varepsilon|A_0(t)|e^{y/\alpha},
    \]
    where we used $\rho+\delta=\rho/2<0$ and $y\ge0$. Divide by $e^{y/\alpha}$ and write $v:=U(te^y)e^{-y/\alpha}/U(t)-1$. This gives
    \[
    |v-A_0(t)q_\rho(y)|\le\varepsilon|A_0(t)|.
    \]
    Combined with~\eqref{eq:qrho-bounds} and $\varepsilon\le1$, this gives
    \[
    |v|\le|A_0(t)|(|\rho|^{-1}+\varepsilon)\le(1+|\rho|^{-1})|A_0(t)|.
    \]
    Since $A_0(t)\to0$, there is $t_0'<\infty$ such that $(1+|\rho|^{-1})|A_0(t)|\le\tfrac12$ for all $t\ge t_0'$. Set $t_0(\varepsilon):=\max\{t_0',t_0(\varepsilon,\delta)\}$, so that $|v|\le\tfrac12$ for $t\ge t_0(\varepsilon)$. Then $|\log(1+v)-v|\le v^2$ gives
    \[
    |\alpha\log(1+v)-\alpha v|\le\alpha v^2\le\alpha(1+|\rho|^{-1})^2A_0(t)^2.
    \]
    Combining this with $|v-A_0(t)q_\rho(y)|\le\varepsilon|A_0(t)|$ via the triangle inequality,
    \begin{align*}
    \left|\alpha\log\frac{U(te^y)}{U(t)}-y-\alpha A_0(t)q_\rho(y)\right|
    &=\left|\alpha\log(1+v)-\alpha A_0(t)q_\rho(y)\right|\\
    &\le\alpha\varepsilon|A_0(t)|+\alpha(1+|\rho|^{-1})^2A_0(t)^2,
    \end{align*}
    uniformly over $t\ge t_0(\varepsilon)$ and $y\ge0$, as required.
\end{proof}

We also need $E_{k,n}$ to concentrate at a scale finer than the fixed-$\eta$ scale used in \cref{lem:intermediate-ld}:

\begin{lemma}\label{lem:ekn-concentration}
    For all sufficiently large $k$, all $n\ge k$, and all $r\in[4/k,1)$,
    \[
    \P\left(\left|E_{k,n}-\log\frac nk\right|>r\right)\le2\exp(-kr^2/32).
    \]
\end{lemma}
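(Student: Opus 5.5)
We reduce both tails of $E_{k,n}$ to binomial tails, exactly as in the proof of \cref{lem:intermediate-ld}, and then apply a Bernstein-type bound. Write $p_\pm:=\P(E\ge\log(n/k)\mp r)=(k/n)e^{\pm r}$, capped at $1$, and let $N_\pm\sim\text{Bin}(n,p_\pm)$ count the exponentials exceeding $\log(n/k)\mp r$. Then
\[
\{E_{k,n}<\log(n/k)-r\}=\{N_+<k\},\qquad \{E_{k,n}>\log(n/k)+r\}=\{N_-\ge k\}.
\]
Hence it suffices to show that each of these binomial events has probability at most $\exp(-kr^2/32)$.

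\textbf{Lower tail of $E_{k,n}$.} Here $N_+$ has mean $\mu_+=ke^{r}$ (or $n\ge k$ when $p_+$ is capped, and then $\P(N_+<k)=0$). The required deviation is $\mu_+-k=k(e^r-1)\ge kr$. Chernoff's bound for the binomial lower tail gives
\[
\P(N_+\le\mu_+-s)\le\exp\bigl(-s^2/(2\mu_+)\bigr).
\]
With $s=\mu_+-k$ this yields the exponent
\[
\frac{k^2(e^r-1)^2}{2ke^r}=\frac{k}{2}\,(e^{r/2}-e^{-r/2})^2\ge \frac{kr^2}{2},
\]
which is far better than $kr^2/32$.

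\textbf{Upper tail of $E_{k,n}$.} Here $N_-$ has mean $\mu_-=ke^{-r}\le k$, and we need $N_-\ge k$, a deviation of $s=k(1-e^{-r})\ge k r/2$ for $r<1$, since $1-e^{-r}\ge r(1-r/2)\ge r/2$. Bernstein's inequality for a sum of Bernoullis with variance at most $\mu_-\le k$ gives
\[
\P(N_-\ge\mu_-+s)\le\exp\left(-\frac{s^2}{2(\mu_-+s/3)}\right)\le\exp\left(-\frac{k^2r^2/4}{2(k+k/3)}\right)=\exp\left(-\frac{3kr^2}{32}\right).
\]
This also exceeds the target rate.

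\textbf{Combining.} A union bound gives $2\exp(-kr^2/32)$. The hypothesis $r\ge4/k$ and "large $k$" are not actually needed for this route. They can be kept harmlessly, or used to absorb any discreteness slack, for example if one prefers to write the event as $\text{Bin}<k$ versus $\le k-1$ with a $\pm1$ correction, since $kr\ge4$ makes such a unit shift negligible against the deviation $kr/2$.

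The main point of care is the upper tail. There the mean $ke^{-r}$ can be much smaller than $k$, so one must use a variance-aware (Bernstein) bound rather than an additive Hoeffding bound in $n$. Hoeffding would only give the useless exponent $s^2/n$. Checking the constant through the $1-e^{-r}\ge r/2$ step is what fixes the $1/32$.
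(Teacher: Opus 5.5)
Your argument is correct, but it takes a different route from the paper. The paper works with the R\'enyi representation $E_{k,n}=\sum_{j=k}^n\Delta_j/j$ and centers at the mean $\sum_{j=k}^n j^{-1}$, which lies within $2/k$ of $\log(n/k)$. It then bounds the log-moment generating function of the centered weighted sum by $\lambda^2\sigma_k^2\le 2\lambda^2/k$ for $0\le\lambda\le k/2$, and applies a Chernoff bound with $\lambda=kr/8$ to each tail at deviation $r/2$. The hypothesis $r\ge 4/k$ is exactly what absorbs the centering error there, and halving $r$ is where the constant $1/32$ comes from.

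You instead reuse the counting representation from the proof of \cref{lem:intermediate-ld}. There the binomial means are exactly $ke^{\pm r}$, so there is no centering error at all. The multiplicative Chernoff bound (equivalently $D(a\|p)\ge (p-a)^2/(2p)$ for $a\le p$) and Bernstein's inequality then give the sharper exponents $kr^2/2$ and $3kr^2/32$. These hold for every $k\ge1$, $n\ge k$ and $r\in(0,1)$, including the capped case $ke^r\ge n$. Your route is therefore slightly more elementary and proves a stronger statement. The paper's route stays within the exponential-spacings framework and gives a sub-Gaussian bound on the moment generating function of $E_{k,n}-\E E_{k,n}$ itself.

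One small inaccuracy is in your closing remark. Since $r<1$, the mean $ke^{-r}\ge k/e$ is never much smaller than $k$. The real reason a variance-aware bound is needed is that the success probability $(k/n)e^{\pm r}$ may be tiny. A Hoeffding bound with exponent of order $s^2/n$ would then be useless when $n\gg k$. This applies to both tails, and you do use a variance-aware bound in both.
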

\begin{proof}
    Recall $E_{k,n}=\sum_{j=k}^n\Delta_j/j$, so $\E E_{k,n}=\sum_{j=k}^nj^{-1}$. Comparing with $\int_k^ns^{-1}ds$ gives $|\E E_{k,n}-\log(n/k)|\le2/k$, so for $r\ge4/k$ it suffices to bound $\P(|W_{k,n}|>r/2)$ for
    \[
    W_{k,n}:=E_{k,n}-\E E_{k,n}=\sum_{j=k}^n\frac{\Delta_j-1}j.
    \]
    Write $\sigma_k^2:=\sum_{j=k}^nj^{-2}\le2/k$, where the last inequality follows from an integral bound. For $0\le\lambda\le k/2$, the bound $-u-\log(1-u)\le u^2$ on $[0,\tfrac12]$ gives
    \[
    \log\E e^{\lambda W_{k,n}}\le\lambda^2\sigma_k^2\le\frac{2\lambda^2}k,
    \]
    and taking $\lambda=kr/8$, a Chernoff bound gives
    \[
    \P(W_{k,n}>r/2)\le\exp\left(-\frac{kr^2}{32}\right).
    \]
    Symmetrically, $u-\log(1+u)\le u^2/2$ for $u\ge0$ gives $\log\E e^{-\lambda W_{k,n}}\le\lambda^2\sigma_k^2/2\le\lambda^2/k$, and the same $\lambda$ gives
    \[
    \P(W_{k,n}<-r/2)\le\exp\left(-\frac{3kr^2}{64}\right)\le\exp\left(-\frac{kr^2}{32}\right).
    \]
    Combining the two tails via a union bound gives $\P(|W_{k,n}|>r/2)\le2\exp(-kr^2/32)$, which proves the claim.
\end{proof}

Finally, the correction term concentrates around its mean at the fast rate $\Theta(k)$, uniformly and regardless of $A(n/k)$:

\begin{lemma}\label{lem:correction-concentration}
    Let $Q_k:=\frac1k\sum_{i=1}^{k-1}q_\rho(E_{i,n}-E_{k,n})$. For every $\tau\in(0,1)$ and every $k\ge\max\{2,2\mu_\rho/\tau\}$,
    \[
    \P(|Q_k-\mu_\rho|>\tau)\le2\exp(-\rho^2\tau^2k/4).
    \]
\end{lemma}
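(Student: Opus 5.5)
The plan is to use the structure of $Q_k$: the spacings $E_{i,n}-E_{k,n}$, $i\in[k-1]$, are the decreasing order statistics of $k-1$ i.i.d.\ standard exponentials. This is the Rényi representation: they are the top $k-1$ of the $E_{j,n}$ shifted by $E_{k,n}$, and are independent of $E_{k,n}$. Since $Q_k$ is a symmetric function of these spacings, it has the same law as
\[
\frac1k\sum_{i=1}^{k-1}q_\rho(\Delta_i),
\]
with $\Delta_1,\dots,\Delta_{k-1}$ i.i.d.\ $\text{Exp}(1)$. (Alternatively, one can verify this directly from the density $f_Y$ used in the proof of \cref{thm:hill-optimality}: $(k-1)!\,e^{-\sum y_i}$ on the ordered cone is exactly the law of sorted i.i.d.\ exponentials.) So $Q_k$ is a scaled sum of i.i.d.\ bounded variables $Z_i:=q_\rho(\Delta_i)\in[0,1/|\rho|]$ with mean $\mu_\rho$ by~\eqref{eq:qrho-bounds}.

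Next I would separate off the deterministic shift caused by the normalisation $1/k$ instead of $1/(k-1)$:
\[
Q_k-\mu_\rho=\frac{k-1}{k}\left(\bar Z_{k-1}-\mu_\rho\right)-\frac{\mu_\rho}{k},\qquad \bar Z_{k-1}:=\frac1{k-1}\sum_{i=1}^{k-1}Z_i.
\]
The condition $k\ge 2\mu_\rho/\tau$ gives $\mu_\rho/k\le\tau/2$, and $(k-1)/k\le1$. Hence $\{|Q_k-\mu_\rho|>\tau\}\subseteq\{|\bar Z_{k-1}-\mu_\rho|>\tau/2\}$, and $k\ge2$ ensures $k-1\ge1$.

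Then I would apply Hoeffding's inequality to the variables $Z_i$, which take values in an interval of length $1/|\rho|$:
\[
\P\left(|\bar Z_{k-1}-\mu_\rho|>\tau/2\right)\le 2\exp\left(-\frac{2(k-1)(\tau/2)^2}{\rho^{-2}}\right)=2\exp\left(-\tfrac12\rho^2\tau^2(k-1)\right).
\]
Since $k\ge2$ gives $k-1\ge k/2$, this is at most $2\exp(-\rho^2\tau^2k/4)$, which is the claimed bound.

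The only step I expect to need care is the distributional identity for the spacings. The key facts are that the top-$(k-1)$ excesses over $E_{k,n}$ are i.i.d.-exponential order statistics (memorylessness, or equivalently the change of variables $y_i=\sum_{j\ge i}^{k-1}\Delta_j/j$ with unit Jacobian times $(k-1)!$), and that $Q_k$ is permutation-invariant in them. The remaining constant bookkeeping is routine.
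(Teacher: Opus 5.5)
Your proposal is correct and matches the paper's proof step for step. Both arguments use the Rényi representation plus permutation-invariance to replace the spacings by $k-1$ i.i.d.\ $\text{Exp}(1)$ variables. Both then absorb the deterministic $\mu_\rho/k$ shift via $k\ge2\mu_\rho/\tau$ to reduce to a deviation of size $\tau/2$, apply Hoeffding with range $1/|\rho|$, and finish with $k-1\ge k/2$.
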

\begin{proof}
    As in the proof of \cref{lem:hill-bound}, we note that by the Rényi representation~\citep{renyi1953theory}, the (multi-)set $\{\sum_{j=i}^{k-1}\Delta_j/j\}_{i\in[k-1]}$ is equal in distribution to $k-1$ i.i.d.\ $\text{Exp}(1)$ variables $\{Y_1,\dots,Y_{k-1}\}$. Since $g(z_1,\dots,z_{k-1}):=\frac1k\sum_iq_\rho(z_i)$ is a symmetric function of its $k-1$ arguments, it therefore holds that
    \[
    Q_k\stackrel d=\frac1k\sum_{i=1}^{k-1}q_\rho(Y_i).
    \]
    The summands $q_\rho(Y_i)$ are independent, take values in $[0,|\rho|^{-1}]$ by~\eqref{eq:qrho-bounds}, and have mean $\mu_\rho$. Define $\bar Y:=\frac1{k-1}\sum_{i=1}^{k-1}q_\rho(Y_i)$, so that Hoeffding's inequality gives
    \[
    \P\left(\left|\bar Y-\mu_\rho\right|>\frac\tau2\right)\le2\exp\left(-\frac{(k-1)\rho^2\tau^2}2\right).
    \]
    We have $\frac{k-1}k\bar Y-\mu_\rho=\frac{k-1}k(\bar Y-\mu_\rho)-\frac{\mu_\rho}k$, so by the triangle inequality and $\frac{k-1}k\le1$,
    \[
    |Q_k-\mu_\rho|\stackrel d=\left|\frac{k-1}{k}\bar Y-\mu_\rho\right|\le|\bar Y-\mu_\rho|+\frac{\mu_\rho}k.
    \]
    For $k\ge2\mu_\rho/\tau$, the second term is at most $\tau/2$, so $|Q_k-\mu_\rho|>\tau$ implies $|\bar Y-\mu_\rho|>\tau/2$. Combined with the Hoeffding bound above and $k-1\ge k/2$ for $k\ge2$,
    \begin{align*}
    \P(|Q_k-\mu_\rho|>\tau)&\le\P\left(|\bar Y-\mu_\rho|>\frac\tau2\right)\\
    &\le2\exp\left(-\frac{(k-1)\rho^2\tau^2}2\right)\le2\exp\left(-\frac{k\rho^2\tau^2}4\right).
    \end{align*}
\end{proof}

We now combine these with \cref{cor:gamma-cramer-md} to obtain the moderate-deviations result for the bias-corrected estimator. The proof additionally uses a diagonal argument that turns \cref{lem:signed-soc} into a bound with vanishing error along $t=n/k$, and local stability of $A_0$ near $n/k$, which follows from the uniform convergence theorem for regularly varying functions.

\begin{theorem}\label{thm:hill-bias-corrected}
    Suppose $U$ satisfies~\eqref{eq:soc} with rate $A(t)$ and $\rho<0$. Let $\theta_k>0$ satisfy
    \[
    \theta_k\to0,\qquad k\theta_k^2\to\infty,\qquad |A(n/k)|=\bigO(\theta_k).
    \]
    Then
    \[
    -\log\P(\alpha\tilde H_{k,n}>1+\theta_k)\sim\frac k2\theta_k^2\qquad\text{and}\qquad -\log\P(\alpha\tilde H_{k,n}<1-\theta_k)\sim\frac k2\theta_k^2,
    \]
    as $n\gg k\gg1$.
\end{theorem}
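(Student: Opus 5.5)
The plan is to write $\alpha\tilde H_{k,n}$ as a sum of i.i.d.\ exponentials plus a small remainder, and then to follow the conditioning scheme of \cref{thm:upper-bounds,thm:lower-bounds}.

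\textbf{Decomposition.} Set $t=e^{E_{k,n}}$ and $y_i=E_{i,n}-E_{k,n}$. Summing \cref{lem:signed-soc} over $i\in[k-1]$ and using the identity $\sum_i y_i=\Gamma_{k-1}$ from \cref{lem:hill-bound}, we get on $\{e^{E_{k,n}}\ge t_0(\varepsilon)\}$ that
\[
\alpha H_{k,n}=\frac{\Gamma_{k-1}}k+\alpha A_0(e^{E_{k,n}})Q_k+R_k,\qquad |R_k|\le\alpha\varepsilon|A_0(e^{E_{k,n}})|+C A_0(e^{E_{k,n}})^2 .
\]
Subtracting $\alpha A(n/k)/(1-\rho)=\alpha A(n/k)\mu_\rho$ then gives
\[
\alpha\tilde H_{k,n}-\frac{\Gamma_{k-1}}k=\alpha\bigl(A_0(e^{E_{k,n}})-A(n/k)\bigr)Q_k+\alpha A(n/k)(Q_k-\mu_\rho)+R_k .
\]
The goal is to show that this right-hand side is $o(\theta_k)$ outside an event whose probability is $e^{-\omega(k\theta_k^2)}$. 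Once that holds, both tails follow from \cref{cor:gamma-cramer-md}, applied at level $\theta_k(1\pm o(1))$ with $m=k-1$, exactly as in the earlier lemmas. The $\tfrac{k}{k-1}$ correction is $O(1/k)=o(\theta_k)$ because $k\theta_k^2\to\infty$.

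\textbf{Controlling the three error terms.} The terms are handled in increasing order of difficulty.

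\emph{(i) The $Q_k$ fluctuation term.} By \cref{lem:correction-concentration} with a fixed $\tau$, we have $Q_k\le\mu_\rho+\tau$ outside an event of probability $e^{-ck}$. The rate $ck$ is $\omega(k\theta_k^2)$ since $\theta_k\to0$. Applying the same lemma with a slowly vanishing $\tau_k$, for instance $\tau_k=\theta_k^{1/2}$, gives $|Q_k-\mu_\rho|\le\tau_k$ outside probability $e^{-\rho^2k\theta_k/4}$, and $k\theta_k\gg k\theta_k^2$. Since $|A(n/k)|=O(\theta_k)$, the second error term is then $O(\theta_k\tau_k)=o(\theta_k)$.

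\emph{(ii) The localization term.} On $\{|E_{k,n}-\log(n/k)|\le r_k\}$ with $r_k\downarrow0$, the uniform convergence theorem together with $A_0\sim A$ gives $A_0(e^{E_{k,n}})=A(n/k)(1+o(1))$. Hence the first error term is $o(|A(n/k)|)=o(\theta_k)$. By \cref{lem:ekn-concentration}, the complement has probability at most $2e^{-kr_k^2/32}$. We need $kr_k^2\gg k\theta_k^2$, so $r_k$ must vanish more slowly than $\theta_k$; the choice $r_k=\theta_k^{1/2}$ works, and it also satisfies $r_k\ge4/k$.

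\emph{(iii) The remainder $R_k$.} On the same localization event, $R_k\le\alpha\varepsilon|A(n/k)|(1+o(1))+O(A(n/k)^2)$. The quadratic part is $O(\theta_k^2)=o(\theta_k)$, and the linear part is at most a constant times $\varepsilon\theta_k$.

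\textbf{Main obstacle.} The delicate step is that $\varepsilon$ in \cref{lem:signed-soc} is fixed, so $R_k$ is only bounded by $C\varepsilon\theta_k$ rather than $o(\theta_k)$. I would handle this with a diagonal argument. For each fixed $\varepsilon$, the bounds above give
\[
\limsup\ \frac{1}{k\theta_k^2}\log\P(\alpha\tilde H_{k,n}>1+\theta_k)\le-\tfrac12(1-C'\varepsilon)^2 .
\]
Letting $\varepsilon\downarrow0$ gives the upper bound. Equivalently, one can choose $\varepsilon_k\downarrow0$ slowly enough that $n/(\eta k)\ge t_0(\varepsilon_k)$ eventually. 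The lower bound follows the same way. One works on the intersection of the good events; each bad event has probability $e^{-\omega(k\theta_k^2)}$, so the good events carry probability $1-o(1)$ and cost nothing at the exponential scale. One also needs $e^{E_{k,n}}\ge t_0(\varepsilon)$, which is implied by the localization event. The conditioning must be handled as in \cref{cor:conditioning}: $\Gamma_{k-1}$ and $Q_k$ are both functions of $\Delta_{1:k-1}$ and are therefore independent of $E_{k,n}$, but they are not independent of each other. For this reason I would bound the probabilities by unions and intersections of events rather than by factorizing.
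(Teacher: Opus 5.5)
Your proposal is correct and follows essentially the same route as the paper. Both arguments sum \cref{lem:signed-soc} over the spacings and localize $E_{k,n}$ and concentrate $Q_k$ at the common scale $\sqrt{\theta_k}$, so each bad event costs $e^{-\Theta(k\theta_k)}=e^{-\omega(k\theta_k^2)}$; both take $\varepsilon_k\downarrow0$ slowly to make the remainder $o(\theta_k)$ and then close both tails with union bounds and \cref{cor:gamma-cramer-md}. The only differences are cosmetic: you group the bias error as $(A_0(e^{E_{k,n}})-A(n/k))Q_k+A(n/k)(Q_k-\mu_\rho)$ rather than $A_0(e^{E_{k,n}})(Q_k-\mu_\rho)+\mu_\rho(A_0(e^{E_{k,n}})-A(n/k))$, and you also offer a fixed-$\varepsilon$-then-$\varepsilon\downarrow0$ variant.
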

\begin{proof}
    Set $r_k:=\sqrt{\theta_k}$ and $\mathcal C_k:=\{|E_{k,n}-\log(n/k)|\le r_k\}$. Since $r_k\to0$ and $kr_k^2=k\theta_k=(k\theta_k^2)/\theta_k\to\infty$, \cref{lem:ekn-concentration} (applied with $r=r_k$) gives
    \[
    \P(\mathcal C_k^c)\le2\exp\left(-\frac{kr_k^2}{32}\right)=2\exp\left(-\frac{k\theta_k}{32}\right).
    \]
    Fix $M<\infty$ such that $|A(n/k)|\le M\theta_k$ for all sufficiently large $k$, which exists since $|A(n/k)|=\bigO(\theta_k)$.

    \paragraph{A uniform expansion for \texorpdfstring{$\alpha H_{k,n}$}{alpha H\_\{k,n\}}.} On $\mathcal C_k$, $e^{E_{k,n}}\ge \ell_k:=(n/k)e^{-r_k}\to\infty$. Choose $\varepsilon_k\downarrow0$ slowly enough that $t_0(\varepsilon_k)\le \ell_k$ for all large $k$ (possible since $t_0(\varepsilon)$ is finite for each fixed $\varepsilon\in(0,1]$ and $\ell_k\to\infty$). Then \cref{lem:signed-soc} applies with $t=e^{E_{k,n}}$ and $\varepsilon=\varepsilon_k$ on $\mathcal C_k$, uniformly over $y\ge0$. Since $A_0\in\mathcal{RV}_\rho$ with $\rho<0$ and $\ell_k/(n/k)\to1$, the standard (two-sided) Potter bound gives
    \[
    \sup_{t\ge \ell_k}|A_0(t)|=\bigO(|A_0(\ell_k)|)=\bigO(|A(n/k)|)=\bigO(\theta_k).
    \]
    Hence the error term in \cref{lem:signed-soc} is $\bigO(\varepsilon_k\theta_k+\theta_k^2)=o(\theta_k)$, deterministically on $\mathcal C_k$, since $\varepsilon_k\to0$.

    We average this bound over $E_{i,n}-E_{k,n}$, $i=1,\dots,k-1$. Using the identity $\sum_{i=1}^{k-1}\left(E_{i,n}-E_{k,n}\right)=\Gamma_{k-1}$ from the proof of \cref{lem:hill-bound}, we get, deterministically on $\mathcal C_k$,
    \[
    \alpha H_{k,n}=\frac{\Gamma_{k-1}}k+\alpha A_0(e^{E_{k,n}})Q_k+o(\theta_k).
    \]

    \paragraph{Local stability of \texorpdfstring{$A_0$}{A\_0}.} Write $t_n:=n/k$. The uniform convergence theorem for $A_0\in\mathcal{RV}_\rho$ gives $\sup_{s\in[-1,1]}|A_0(t_ne^s)/A_0(t_n)-e^{\rho s}|\to0$. Since $r_k\to0$ and $A_0\sim A$, this gives
    \[
    \sup_{|s|\le r_k}\left|\frac{A_0(t_ne^s)}{A(t_n)}-1\right|\to0.
    \]
    We apply this to $s=E_{k,n}-\log(n/k)$, which satisfies $|s|\le r_k$ on $\mathcal C_k$. Together with $|A(n/k)|\le M\theta_k$, this gives, deterministically on $\mathcal C_k$ and for all large $k$,
    \[
    |A_0(e^{E_{k,n}})|\le2|A(n/k)|\le2M\theta_k,\qquad |A_0(e^{E_{k,n}})-A(n/k)|=o(\theta_k).
    \]
    By~\eqref{eq:bias-corrected-hill}, $\alpha\tilde H_{k,n}=\alpha H_{k,n}-\alpha A(n/k)/(1-\rho)$, so on $\mathcal C_k$,
    \[
    \alpha\tilde H_{k,n}-1=\frac{\Gamma_{k-1}}k-1+\alpha A_0(e^{E_{k,n}})(Q_k-\mu_\rho)+\alpha\mu_\rho\left(A_0(e^{E_{k,n}})-A(n/k)\right)+o(\theta_k).
    \]
    On $\mathcal C_k\cap\{|Q_k-\mu_\rho|\le r_k\}$, the two middle terms are bounded by $\bigO(\theta_k)\cdot r_k+o(\theta_k)=o(\theta_k)$, so
    \begin{equation}\label{eq:hill-bias-corrected-expansion}
    \alpha\tilde H_{k,n}-1=\frac{\Gamma_{k-1}}k-1+o(\theta_k).
    \end{equation}
    Since $r_k\to0$ and $kr_k^2=k\theta_k\to\infty$, we have $kr_k=(kr_k^2)/r_k\to\infty$, so $k\ge2\mu_\rho/r_k$ for sufficiently large $k$. Hence \cref{lem:correction-concentration} applies with $\tau=r_k$, giving $\P(|Q_k-\mu_\rho|>r_k)\le2\exp(-\rho^2\theta_kk/4)$.

    \paragraph{Upper tail.} On $\mathcal C_k\cap\{|Q_k-\mu_\rho|\le r_k\}$, we can use~\eqref{eq:hill-bias-corrected-expansion} to show $\{\alpha\tilde H_{k,n}-1>\theta_k\}\subseteq\{\Gamma_{k-1}\ge(k-1)(1+\theta_k')\}$, where $\theta_k'$ is defined by
    \[
    1+\theta_k'=\frac k{k-1}\left(1+\theta_k-o(\theta_k)\right),
    \]
    so $\theta_k'\sim\theta_k$. We apply \cref{cor:gamma-cramer-md} and take a union bound,
    \[
    \P(\alpha\tilde H_{k,n}-1>\theta_k)\le\P(\Gamma_{k-1}\ge(k-1)(1+\theta_k'))+\P(\mathcal C_k^c)+\P(|Q_k-\mu_\rho|>r_k).
    \]
    Since $k\theta_k\gg k\theta_k^2$, the last two terms are negligible. \cref{cor:gamma-cramer-md} gives $-\log\P(\Gamma_{k-1}\ge(k-1)(1+\theta_k'))\sim\tfrac k2\theta_k'^2\sim\tfrac k2\theta_k^2$, so
    \[
    \limsup_{n\gg k\gg1}\frac1{k\theta_k^2}\log\P(\alpha\tilde H_{k,n}-1>\theta_k)\le-\tfrac12.
    \]
    For the matching lower bound, the reverse inclusion $\{\Gamma_{k-1}\ge(k-1)(1+\theta_k'')\}\cap\mathcal C_k\cap\{|Q_k-\mu_\rho|\le r_k\}\subseteq\{\alpha\tilde H_{k,n}-1>\theta_k\}$ holds for $\theta_k''\sim\theta_k$ defined symmetrically. Since $\P(A\cap B\cap C)\ge\P(A)-\P(B^c)-\P(C^c)$ and $\P(B^c),\P(C^c)=o(\P(A))$ (again because $k\theta_k\gg k\theta_k^2$), \cref{cor:gamma-cramer-md} gives $\liminf\frac1{k\theta_k^2}\log\P(\alpha\tilde H_{k,n}-1>\theta_k)\ge-\tfrac12$. This completes the upper tail.

    The proof for the lower tail is analogous, using $\{\alpha\tilde H_{k,n}-1<-\theta_k\}$ and the lower-tail parts of \cref{lem:gamma-cramer,cor:gamma-cramer-md} in place of the upper-tail parts.
\end{proof}
If $U$ satisfies~\eqref{eq:soc} with rate function $A(t)$, then the sign of $A(t)$ is constant for sufficiently large $t$. We denote this sign by $\text{sgn}(A(\infty))\in\{-1,+1\}$.
As in \cref{def:rescaled-hill}, \cref{thm:hill-bias-corrected} yields a conservative estimator. It is now valid throughout the moderate-deviation regime approaching the CLT scale from above, $\theta_k\gg k^{-1/2}$, regardless of the size of $A(n/k)$ relative to $\theta_k$, as long as $|A(n/k)|=\bigO(\theta_k)$, rather than only for $\theta_k\gg|A(n/k)|$:

\begin{corollary}\label{cor:bias-corrected-conservative}
    Define $\hat\alpha^{(\theta),\mathrm{bc}}_{k,n}:=(1-\theta)/\tilde H_{k,n}$ for $\theta\in[0,1)$.
    \begin{enumerate}
    \item If $k^{-1/2}\ll\theta_k\ll1$ and $|A(n/k)|=\bigO(\theta_k)$, then $\hat\alpha^{(\theta_k),\mathrm{bc}}_{k,n}$ is conservative with decay rate $\lambda_k=\tfrac k2\theta_k^2$:
    \[
    -\log\P\left(\hat\alpha^{(\theta_k),\mathrm{bc}}_{k,n}>\alpha\right)\sim\frac k2\theta_k^2.
    \]
    \item Let $\theta_k\ge0$ with $\theta_k\to0$. If $\sqrt k|A(n/k)|\to\infty$ and $\liminf_{n\gg k\gg 1} \theta_k/|A(n/k)|>-\frac{\alpha}{1-\rho}\text{sgn}(A(\infty))$, then $\hat\alpha^{(\theta_k)}_{k,n}$ (without bias-correction) is conservative with decay rate
    \[
    -\log\P\left(\hat\alpha^{(\theta_k)}_{k,n}>\alpha\right)\sim\frac k2\left(\theta_k+\frac{\alpha}{1-\rho}A(n/k)\right)^2.
    \]
    \item In particular, if $\sqrt kA(n/k)\to+\infty$, then the Hill estimator $\hat\alpha_{k,n}=1/H_{k,n}$ (without bias-correction or rescaling) has a decay rate of
    \begin{equation}\label{eq:original-hill-decay}
        -\log\P(\hat\alpha_{k,n}>\alpha)\sim \frac{k\alpha^2}{2(1-\rho)^2}A(n/k)^2.
    \end{equation}
    \end{enumerate}
\end{corollary}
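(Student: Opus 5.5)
Each part will be read off from \cref{thm:hill-bias-corrected}. I first rewrite the event $\{\text{estimator}>\alpha\}$ as a lower-tail event for $\alpha\tilde H_{k,n}$ at a suitable level $\phi_k$. Then I check that $\phi_k$ meets the three hypotheses of that theorem: $\phi_k\to0$, $k\phi_k^2\to\infty$ and $|A(n/k)|=\bigO(\phi_k)$.

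\emph{Part 1.} By the convention $c/T:=+\infty$ on $\{T\le0\}$,
\[
\{\hat\alpha^{(\theta_k),\mathrm{bc}}_{k,n}>\alpha\}=\{\alpha\tilde H_{k,n}<1-\theta_k\}.
\]
This holds on $\{\tilde H_{k,n}>0\}$ by rearranging. On $\{\tilde H_{k,n}\le0\}$ both sides hold, since $1-\theta_k>0$ for large $k$. The hypotheses $k^{-1/2}\ll\theta_k\ll1$ and $|A(n/k)|=\bigO(\theta_k)$ are exactly those of \cref{thm:hill-bias-corrected}. Its lower-tail statement then gives $-\log\P(\cdot)\sim\tfrac k2\theta_k^2$.

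\emph{Part 2.} By the same convention, and using $\alpha H_{k,n}=\alpha\tilde H_{k,n}+\alpha A(n/k)/(1-\rho)$,
\[
\{\hat\alpha^{(\theta_k)}_{k,n}>\alpha\}=\{\alpha H_{k,n}<1-\theta_k\}=\{\alpha\tilde H_{k,n}<1-\phi_k\},\qquad \phi_k:=\theta_k+\frac{\alpha}{1-\rho}A(n/k).
\]
For large $t$ we have $A(t)=\text{sgn}(A(\infty))|A(t)|$, so
\[
\frac{\phi_k}{|A(n/k)|}=\frac{\theta_k}{|A(n/k)|}+\frac{\alpha}{1-\rho}\text{sgn}(A(\infty)).
\]
The assumption says this has $\liminf$ equal to some $c>0$. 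Hence $\phi_k\ge \tfrac c2|A(n/k)|>0$ eventually, which gives $|A(n/k)|=\bigO(\phi_k)$. It also gives $k\phi_k^2\ge\tfrac{c^2}{4}kA(n/k)^2\to\infty$, because $\sqrt k|A(n/k)|\to\infty$. Finally $\phi_k\to0$, since both $\theta_k\to0$ and $A(n/k)\to0$.

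Applying the lower-tail statement of \cref{thm:hill-bias-corrected} with $\phi_k$ in place of $\theta_k$ then yields $-\log\P(\hat\alpha^{(\theta_k)}_{k,n}>\alpha)\sim\tfrac k2\phi_k^2$, which is the claimed formula. Since this holds for every $U\in2\mathcal{RV}_{1/\alpha,A}$, the estimator is conservative with this decay rate in the restricted sense introduced at the start of \cref{sec:md}.

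\emph{Part 3.} Take $\theta_k\equiv0$. Since $A(n/k)>0$ eventually, $\text{sgn}(A(\infty))=+1$, and the required condition reads $0>-\alpha/(1-\rho)$, which is true. Also $\sqrt k|A(n/k)|\to\infty$ by assumption. Part 2 therefore gives \eqref{eq:original-hill-decay} with $\phi_k=\tfrac{\alpha}{1-\rho}A(n/k)$.

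The only delicate point is Part 2's verification that $\phi_k$ is eventually positive and comparable to $|A(n/k)|$. Both the sign bookkeeping and the requirement $k\phi_k^2\to\infty$ hinge on the liminf condition and on $\sqrt k|A(n/k)|\to\infty$. Without the latter, $\theta_k$ could be of order $k^{-1/2}$ or smaller, and the theorem would not apply.
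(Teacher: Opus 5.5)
Your proposal is correct and follows essentially the same route as the paper: each event is rewritten as a lower-tail event $\{\alpha\tilde H_{k,n}<1-\phi_k\}$ with $\phi_k=\theta_k+\frac{\alpha}{1-\rho}A(n/k)$, the hypotheses of \cref{thm:hill-bias-corrected} are checked using the $\liminf$ condition and $\sqrt k|A(n/k)|\to\infty$, and Part 3 is the special case $\theta_k\equiv0$. One small point: if that $\liminf$ equals $+\infty$, replace $c/2$ by any positive constant; apart from this, your sign bookkeeping and your use of the reciprocal convention are slightly more explicit than the paper's.
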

\begin{proof}
    We use $\{\hat\alpha^{(\theta_k),\mathrm{bc}}_{k,n}>\alpha\}=\{\alpha\tilde H_{k,n}<1-\theta_k\}$ and apply \cref{thm:hill-bias-corrected}. For the second claim, we apply \cref{thm:hill-bias-corrected} with $\theta_k'=\theta_k+\frac{\alpha}{1-\rho}A(n/k)$. We verify \[\liminf_{n\gg k\gg 1}\theta_k'/|A(n/k)|=\frac\alpha{1-\rho}\text{sgn}(A(\infty))+\liminf_{n\gg k\gg 1}\theta_k/|A(n/k)|>0,\]
    so that $\theta_k'>0$ and $|A(n/k)|=\bigO(\theta_k')$ eventually, while $\theta_k'\to0$ and $k\theta_k'^2\to\infty$ follow from $\theta_k\to0$, $A(n/k)\to0$ and $\sqrt k|A(n/k)|\to\infty$. Hence, \cref{thm:hill-bias-corrected} applies to $\theta_k'$.
    The claim follows from
    \[
    \{\alpha\tilde H_{k,n}<1-\theta_k'\}=\{\alpha H_{k,n}<1-\theta_k\}=\{\hat\alpha_{k,n}^{(\theta_k)}>\alpha\}.
    \]
    Finally, the last claim follows from the previous one with $\theta_k=0$.
\end{proof}

In practice, the second-order bias $A(n/k)$ and the second-order parameter $\rho$ are unknown, so that the bias-corrected Hill estimator $\tilde H_{k,n}$ is not directly computable. \eqref{eq:original-hill-decay} again highlights that the decay rate is large whenever the bias of the estimator is large. 

\subsection{The case \texorpdfstring{$\rho=0$}{rho=0}}
For $\rho=0$, we have $\int_1^{e^y}u^{\rho-1}du= y$ and the SOC bound from \eqref{eq:haan-soc} merely gives
\begin{equation}\label{eq:haan-soc-rewritten}
    \alpha\log\frac{U(te^y)}{U(t)}\ge y+\alpha\log\left(1-(\varepsilon e^{\delta y}+y)|A_0(t)|\right),
\end{equation}
but $\varepsilon e^{y\delta}$ is not bounded in $y$ for $\delta>0$. This shows that for $\rho=0$, the SOC does not yield a faster Potter rate in terms of $A(t)$. 
Because of this, we need a stronger condition $\theta_k\gg \sup_{t\ge n/k}|A(t)|^{1/(1+\kappa)}$ for some $\kappa>0$ to get similar moderate deviations as in the case $\rho<0$.

\begin{theorem}\label{thm:hill-rho0}
    If $U$ satisfies~\eqref{eq:soc} with rate $A(t)$ and $\rho=0$, then for any sequence $\theta_k\to0$ with $\theta_k\gg k^{-1/2}+\sup_{t\ge n/k}|A(t)|^{1/(1+\kappa)}$ for some $\kappa\in(0,1)$, we have
    \[
        -\log\P(\alpha H_{k,n}>1+\theta_k)\sim \frac{k}{2}\theta_k^2\qquad\text{and}\qquad -\log\P(\alpha H_{k,n}<1-\theta_k)\sim \frac{k}{2}\theta_k^2,
    \]
    as $n\gg k\gg1$.
\end{theorem}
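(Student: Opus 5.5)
Rather than bounding the Potter rate $\varepsilon(t)$ globally (which, as noted after \eqref{eq:haan-soc-rewritten}, fails for $\rho=0$), we truncate the spacings. We then rerun the coupling argument of \cref{thm:upper-bounds,thm:lower-bounds}, with the Potter bound of \cref{lem:hill-bound} replaced by a direct use of \eqref{eq:haan-soc} on a bounded range $y\in[0,Y_k]$. Write $a_k:=\sup_{t\ge n/(\eta k)}|A_0(t)|$. Since $A_0\sim A$ is slowly varying, $a_k=\bigO(\sup_{t\ge n/k}|A(t)|)$, so the hypothesis gives $a_k\ll\theta_k^{1+\kappa}$. Condition on $G_{k,n}=\{E_{k,n}\ge\log(n/k)-\log\eta\}$ via \cref{cor:conditioning}. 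This is legitimate because the speed is $\lambda_k=\tfrac k2\theta_k^2=o(k)$, so \cref{lem:intermediate-ld} holds for every $\eta>1$.

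\textbf{Step 1 (bounded range).} Apply \eqref{eq:haan-soc} with $\rho=0$, fixed $\varepsilon=1$ and $\delta$ small (say $\delta=\kappa/2$), with $t=e^{E_{k,n}}\ge n/(\eta k)$ and $x=e^y$. For $0\le y\le Y_k$ this gives
\[
\left|\alpha\log\frac{U(te^y)}{U(t)}-y\right|\le C\,a_k\,(y+e^{\delta y}),
\]
after linearising the logarithm, since $a_k(Y_k+e^{\delta Y_k})\to0$. Choose $Y_k:=\delta^{-1}\kappa'\log(1/\theta_k)$ with $\kappa'<\kappa$. Then $e^{\delta Y_k}=\theta_k^{-\kappa'}$, and the per-term error is at most $C a_k\theta_k^{-\kappa'}(1+y)\ll\theta_k^{1+\kappa-\kappa'}(1+y)=o(\theta_k)(1+y)$. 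Averaging over $y_i=E_{i,n}-E_{k,n}$, $i<k$, and using $\sum y_i=\Gamma_{k-1}$, the contribution of indices with $y_i\le Y_k$ is $\Gamma_{k-1}/k$ up to a multiplicative $(1+o(\theta_k))$ error and an additive $o(\theta_k)$ error.

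\textbf{Step 2 (large spacings).} For indices with $y_i>Y_k$, use the plain Potter bound of \cref{lem:potter-rewritten} with $\varepsilon_k=\varepsilon(n/(\eta k))\to0$. The error there is at most $\alpha|\log(1\pm\varepsilon_k)|+\varepsilon_k y_i$ per term. By the symmetry argument in the proof of \cref{lem:correction-concentration}, the multiset $\{y_i\}$ is distributed as $k-1$ i.i.d.\ $\mathrm{Exp}(1)$ variables. So the total error is bounded by $\tfrac{C}{k}\sum_i (1+y_i)\1\{y_i>Y_k\}=:R_k$, whose mean is about $(Y_k+2)e^{-Y_k}=\bigO(\theta_k^{\kappa'/\delta}\log(1/\theta_k))$. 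Taking $\delta<\kappa'$ (e.g.\ $\delta=\kappa'/2$) makes this $o(\theta_k)$. I will then show $\P(R_k>\tau\theta_k)$ is negligible at speed $k\theta_k^2$. Its exponent should be of order $k\theta_k\cdot Y_k\gg k\theta_k^2$: for truncated exponential tails, a Chernoff bound with parameter $s=1/2$ gives $\log\E e^{s(1+y)\1\{y>Y_k\}}=\bigO(Y_ke^{-Y_k})$. This yields $\P(R_k>\tau\theta_k)\le\exp(-k\tau\theta_k/2+\bigO(kY_ke^{-Y_k}))$, and $kY_ke^{-Y_k}=o(k\theta_k)$.

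\textbf{Step 3 (conclusion).} On $G_{k,n}\cap\{R_k\le\tau\theta_k\}$, letting $\tau\downarrow0$ along a diagonal sequence, we get $\alpha H_{k,n}=\Gamma_{k-1}/k\,(1+o(\theta_k))+o(\theta_k)$ deterministically. The two tails then follow exactly as in the proof of \cref{thm:hill-bias-corrected}. We sandwich the events between $\{\Gamma_{k-1}\ge(k-1)(1+\theta_k')\}$ and $\{\Gamma_{k-1}\le(k-1)(1-\theta_k')\}$ with $\theta_k'\sim\theta_k$, apply \cref{cor:gamma-cramer-md}, and absorb the exceptional events, which have larger exponents. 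I expect the main obstacle to be Step 2 together with the choice of $\delta,\kappa'$. The constraints $e^{\delta Y_k}a_k\ll\theta_k$ and $Y_ke^{-Y_k}\ll\theta_k$ must hold simultaneously, and the exceptional probability must decay strictly faster than $e^{-k\theta_k^2/2}$. This is also where the restriction $\kappa<1$ (keeping $Y_k$ and the constants controlled) should enter.
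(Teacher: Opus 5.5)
Your argument is sound and takes a different route from the paper.

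\textbf{How the paper argues.} The two upper bounds come from a Chernoff bound on the conditional moment generating function $\E[h(E_{k,n})^{k-1}]$, truncating each spacing at the level $y_0$ of \eqref{eq:y0-def}. In the lower tail the part $\{E>y_0\}$ costs only $\P(E>y_0)$, because $g\le1$. In the upper tail that part is controlled by a crude Potter bound inside the integral. The two lower bounds are proved separately. The paper exponentially tilts $\Delta_{1:k-1}$ to mean $1\mp(1+\varepsilon)\theta_k$ and checks that the tilted Hill estimator lands in the target event with probability tending to one.

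\textbf{How you argue.} You prove one pathwise approximation on $G_{k,n}$: $|\alpha H_{k,n}-\Gamma_{k-1}/k|\le b_k(1+\Gamma_{k-1}/k)+R_k$, with deterministic $b_k=o(\theta_k)$. You show that the large-spacing remainder $R_k$ exceeds $\tau\theta_k$ only with probability $e^{-\Theta(k\theta_k)}$. This is negligible at speed $k\theta_k^2$ precisely because $\theta_k\to0$. All four bounds then follow from \cref{cor:gamma-cramer-md}, as in the proof of \cref{thm:hill-bias-corrected}.

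\textbf{What each buys.} Your route is more modular and symmetric: there is no change of measure and no separate law-of-large-numbers analysis under a tilted law. It also shows that $\kappa<1$ is inessential. The hypothesis only becomes stronger as $\kappa$ grows, and the paper uses $\kappa<1$ merely to absorb an $\bigO(\theta_k^{5/2})$ term. The paper's Chernoff route, in turn, gives non-asymptotic upper bounds with an explicit $\bigO(k\theta_k^{2+\kappa/2})$ error in the exponent. It also needs no concentration estimate for large spacings in the lower tail.

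\textbf{A computation in Step 2 needs repair.} Take $Z=(1+y)\1\{y>Y_k\}$ with $y\sim\mathrm{Exp}(1)$ and fixed $s=\tfrac12$. Then $\E e^{sZ}=1-e^{-Y_k}+\tfrac{e^{s}}{1-s}e^{-(1-s)Y_k}$, so $\log\E e^{Z/2}\asymp e^{-Y_k/2}=\theta_k^{\kappa'/(2\delta)}$, not $\bigO(Y_ke^{-Y_k})$. With your choice $\delta=\kappa'/2$ this is of order $\theta_k$. The Chernoff bound then reads $\exp(-k\tau\theta_k/(2C)+\bigO(k\theta_k))$, which is vacuous for small $\tau$.

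The repair costs nothing. Since $e^{\delta Y_k}=\theta_k^{-\kappa'}$ for every $\delta$, Step 1 places no constraint on $\delta$. So take $\delta<\kappa'/2$; for example, $\delta=\kappa'/4$ gives $e^{-Y_k/2}=\theta_k^2$. Alternatively, keep $\delta$ and take $s\asymp1/Y_k$. The resulting exponent is of order $k\theta_k$ (respectively $k\theta_k/\log(1/\theta_k)$), not $k\theta_kY_k$ as you guessed. Either still dominates $k\theta_k^2$.
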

\begin{proof}
    We define $A^*(t)=\sup_{t'\ge t}|A_0(t')|$ so that $A^*(n/k)=\bigO(\theta_k^{1+\kappa})$, where $A_0(t)$ is as given in~\eqref{eq:haan-soc} and the asymptotics follow from the assumption on $\theta_k$ together with $A_0(t)\sim A(t)$. Note that $A^*$ is non-increasing. Moreover, since $A_0\in\mathcal{RV}_0$, the uniform convergence theorem for slowly varying functions gives $\sup_{t'\in[ct,t]}|A_0(t')|\le(1+o(1))|A_0(t)|$ for every fixed $c\in(0,1)$, and hence
    \begin{equation}\label{eq:astar-local}
    A^*(ct)\le(1+o(1))A^*(t),\qquad t\to\infty,
    \end{equation}
    for every fixed $c\in(0,1)$. In particular, $A^*(e^{-\kappa}n/k)=\bigO(\theta_k^{1+\kappa})$ and $A^*(e^{-1}n/k)=\bigO(\theta_k^{1+\kappa})$.
    \paragraph{Lower tail, upper bound.} We first prove the bound
    \[
    \log\mathbb P(\alpha H_{k,n}\le 1-\theta_k)\le -\frac{k\theta_k^2}2\cdot(1+o(1)),
    \]
    directly via a Chernoff bound. Write $X_{i,n}=U(e^{E_{i,n}})$ and, for $s>0$, $g(z,u):=\left(U(e^{u+z})/U(e^u)\right)^{-\alpha s}$, so that $e^{-sk\alpha H_{k,n}}=\prod_{i=1}^k g(E_{i,n}-E_{k,n},E_{k,n})$ (the $i=k$ term contributes $g(0,E_{k,n})=1$). Notice that the product only depends on the (multi-)set of values $\{E_{i,n}-E_{k,n}\}_{i=1}^{k-1}$, which is independent of $E_{k,n}$ and equal in distribution to a set of i.i.d. copies of $E\sim\text{Exp}(1)$. Hence, writing
    \[
    h(u):=\E\left[g(E,u)\right]=\E\left[\left(\frac{U(e^{u+E})}{U(e^{u})}\right)^{-\alpha s}\right],
    \]
    we have $\E\left[e^{-sk\alpha H_{k,n}}\ \middle|\ E_{k,n}\right]=h(E_{k,n})^{k-1}$.
    Combined with the Chernoff bound $\P(\alpha H_{k,n}\le1-\theta_k)=\P(e^{-sk\alpha H_{k,n}}\ge e^{-sk(1-\theta_k)})\le\E[e^{-sk\alpha H_{k,n}}]e^{sk(1-\theta_k)}$, this yields
    \begin{align}\label{eq:hill-md-upper}
\mathbb P(\alpha H_{k,n}\le 1-\theta_k)&\le\E\left[h(E_{k,n})^{k-1}\right]e^{ks(1-\theta_k)}.
\end{align}
We introduce $e_{k,n}$ as the maximum of $\log t_0(\kappa,\kappa/5)$ and $\log(n/k)-\kappa$, where $t_0(\varepsilon,\delta)$ is as given in the proof of~\cref{lem:potter-soc}. This guarantees $e^{u}\ge t_0(\kappa,\kappa/5)$ for $u\ge e_{k,n}$, which allows us to use~\eqref{eq:haan-soc-rewritten} for such $u$. Since $U$ is non-decreasing, we have $g\le1$ and hence $h\le1$, so that
\begin{align*}
\E\left[h(E_{k,n})^{k-1}\right]
\le \P(E_{k,n}<e_{k,n})+\sup_{u\ge e_{k,n}}h(u)^{k-1}.
\end{align*}
The large deviations of $E_{k,n}$ (see \cref{lem:intermediate-ld}) imply $\P(E_{k,n}< e_{k,n})\le e^{-\Theta(k)}$, which will be shown to be negligible.
We will prove that for $s=\theta_k/(1-\theta_k)$, it holds that
\begin{equation}\label{eq:chernoff-soc}
\sup_{u\ge e_{k,n}}h(u)\le\exp\left(-\theta_k-\frac12\theta_k^2+o(\theta_k^2)\right).
\end{equation}
We will make use of~\eqref{eq:haan-soc-rewritten}. We introduce $y_0=y_0(\kappa)$ as the solution of
\begin{equation}\label{eq:y0-def}
A^*(e^{-\kappa}n/k)(\kappa e^{\kappa y/5}+y)=\theta_k^{1+\kappa/2}.
\end{equation}
Since the left-hand side is continuous and increasing in $y$, and $A^*(e^{-\kappa}n/k)=\bigO(\theta_k^{1+\kappa})\ll\theta_k^{1+\kappa/2}$, the solution $y_0$ exists and satisfies $y_0\to\infty$.
Fix $u\ge e_{k,n}$. We bound
\begin{align*}
h(u)
&\le\P(E>y_0)+\E\left[\mathbb1\{E\le y_0\}\left(\frac{U(e^{u+E})}{U(e^{u})}\right)^{-\alpha s}\right].
\end{align*}
Since $A^*$ is non-increasing and $u\ge e_{k,n}\ge\log(n/k)-\kappa$, we have $A^*(e^u)\le A^*(e^{-\kappa}n/k)$. We use the bounds~\eqref{eq:haan-soc-rewritten} and~\eqref{eq:y0-def}, together with the fact that $(1-y)^{-\alpha s}$ is an increasing function for $y\in(0,1)$, to obtain
\begin{align*}
    &\E\left[\mathbb1\{E\le y_0\}\left(\frac{U(e^{u+E})}{U(e^{u})}\right)^{-\alpha s}\right]\\
    &\le \E\left[\mathbb1\{E\le  y_0\}e^{-sE}\left(1-(\kappa e^{\kappa E/5}+E)A^*(e^{u})\right)^{-\alpha s}\right]\\
    &\le \E\left[\mathbb1\{E\le  y_0\}e^{-sE}\left(1-\theta_k^{1+\kappa/2}\right)^{-\alpha s}\right]\\
    &\le \E[e^{-sE}]\cdot \left(1-\theta_k^{1+\kappa/2}\right)^{-\alpha s}\\
    &= (1+s)^{-1}\cdot \exp\left(\bigO\left(s\cdot \theta_k^{1+\kappa/2}\right)\right),
\end{align*}
uniformly in $u\ge e_{k,n}$.
To find the asymptotics of $\P(E>y_0)$, we write
\(
\P(E>y_0)=e^{-y_0},
\)
and use~\eqref{eq:y0-def} with $A^*(e^{-\kappa}n/k)=\bigO(\theta_k^{1+\kappa})$ to obtain
\begin{align*}
e^{-y_0}
&\sim \left(\frac{\kappa A^*(e^{-\kappa}n/k)}{\theta_k^{1+\kappa/2}}\right)^{5/\kappa}=\bigO\left(\theta_k^{5/2}\right).
\end{align*}
Putting everything together, we have obtained the bound
\begin{align*}
\sup_{u\ge e_{k,n}}h(u)
\le &\bigO\left(\theta_k^{5/2}\right)+(1+s)^{-1}\cdot \exp\left(\bigO\left(s\cdot \theta_k^{1+\kappa/2}\right)\right).
\end{align*}
We substitute $s=\theta_k/(1-\theta_k)\sim\theta_k$, so that
\[
-\log(1+s)=\log(1-\theta_k)=-\theta_k-\frac12\theta_k^2+\bigO(\theta_k^3),
\]
which leads to
\[
\log \sup_{u\ge e_{k,n}}h(u)\le -\theta_k-\frac12\theta_k^2+\bigO(\theta_k^{2+\kappa/2}),
\]
which proves~\eqref{eq:chernoff-soc}.

We now move on to the remainder of the proof and substitute~\eqref{eq:chernoff-soc} into~\eqref{eq:hill-md-upper} to obtain
\begin{align*}
    \mathbb P(\alpha H_{k,n}\le 1-\theta_k)
    &\le\left(e^{-\Theta(k)}+\exp\left(-(k-1)\left(\theta_k+\frac12\theta_k^2+o(\theta_k^2)\right)\right)\right)e^{k\theta_k}\\
    &=\exp\left(-\frac k2\theta_k^2+o(k\theta_k^2)\right),
\end{align*}
so that, indeed
\[
\log\mathbb P(\alpha H_{k,n}\le 1-\theta_k)\le -\frac{k\theta_k^2}2\cdot(1+o(1)),
\]
which proves the upper bound.

\emph{Lower bound.} We construct a change of measure on the top $k-1$ exponential spacings $\Delta_1,\dots,\Delta_{k-1}$, leaving $\Delta_k,\dots,\Delta_n$, and hence $E_{k,n}=\sum_{i=k}^n\Delta_i/i$, unchanged. Recall from~\eqref{eq:exp-spacing-coupling} that
\[
\frac{X_{i,n}}{X_{k,n}}=\frac{U(e^{E_{k,n}+Y_i})}{U(e^{E_{k,n}})},\qquad Y_i:=\sum_{j=i}^{k-1}\frac{\Delta_j}j,\quad i\in[k-1],
\]
so that $H_{k,n}$ is a function of $(\Delta_{1:k-1},E_{k,n})$, where $\Delta_{1:k-1}$ is independent of $E_{k,n}$.
Fix $\varepsilon>0$ and write $m_k:=1-(1+\varepsilon)\theta_k$ and $c_k:=\frac{(1+\varepsilon)\theta_k}{1-(1+\varepsilon)\theta_k}$, so that $1/m_k-c_k=1$. Let $\Delta_1',\dots,\Delta_{k-1}'$ be i.i.d.\ exponential random variables with mean $m_k$, independent of $\Delta_k,\dots,\Delta_n$. Tilting the density of $\Delta_i'$ by $e^{c_k\Delta_i'}$ yields a standard exponential density, since $m_k^{-1}e^{-x/m_k}e^{c_kx}=m_k^{-1}e^{-x}$, and the corresponding normalizing constant is $\E[e^{c_k\Delta_i'}]=1/m_k$. We define $H'_{k,n}$ as the Hill estimator applied to the order statistics
\[
    X_{i,n}'=U\left(\exp\left(E_{k,n}+\sum_{j=i}^{k-1}\frac{\Delta_j'}{j}\right)\right),\ i\in[k-1],\qquad X'_{k,n}=X_{k,n}=U\left(e^{E_{k,n}}\right).
\]
Since the tilted law of $(\Delta'_{1:k-1},E_{k,n})$ coincides with the law of $(\Delta_{1:k-1},E_{k,n})$, we obtain
\begin{align*}
    \mathbb P(\alpha H_{k,n}\le1-\theta_k)
    &=\frac{\mathbb E\left[\mathbb1\{\alpha H'_{k,n}\le1-\theta_k\}\exp\left(c_k\sum_{i\in[k-1]}\Delta'_i\right)\right]}{\mathbb E\left[\exp\left(c_k\sum_{i\in[k-1]}\Delta'_i\right)\right]}\\
    &=\left(1-(1+\varepsilon)\theta_k\right)^{k-1}\\
    &\quad\times\mathbb E\left[\mathbb1\{\alpha H'_{k,n}\le1-\theta_k\}\exp\left(c_k\sum_{i\in[k-1]}\Delta'_i\right)\right].
\end{align*}
We introduce the event
\[
\mathcal L_k=\left\{\frac1{k-1}\sum_{i\in[k-1]}\Delta_i'\ge 1-(1+2\varepsilon)\theta_k\right\}.
\]
It holds that $\P(\mathcal L_k)\to1,$ since
\[
\frac1{k-1}\sum_{i\in[k-1]}\Delta_i'=1-(1+\varepsilon)\theta_k+\bigO_\P(k^{-1/2})
\]
by the CLT, while $\varepsilon\theta_k\gg k^{-1/2}$.
On $\mathcal L_k$, we have
\[
c_k\sum_{i\in[k-1]}\Delta'_i\ge(k-1)(1+\varepsilon)\theta_k\frac{1-(1+2\varepsilon)\theta_k}{1-(1+\varepsilon)\theta_k}=(k-1)(1+\varepsilon)\theta_k\left(1-\varepsilon\theta_k+o(\theta_k)\right),
\]
so that
\begin{align}
    &\P(\alpha H_{k,n}\le 1-\theta_k)\nonumber\\
    \ge &\P(\alpha H'_{k,n}\le1-\theta_k,\mathcal L_k)\nonumber\\
    &\times\exp\left((k-1)\left(\log\left(1-(1+\varepsilon)\theta_k\right)+(1+\varepsilon)\theta_k-\varepsilon(1+\varepsilon)\theta_k^2+o(\theta_k^2)\right)\right)\nonumber\\
    =& \P(\alpha H'_{k,n}\le1-\theta_k,\mathcal L_k)\exp\left(-k\left(\frac12(1+\varepsilon)^2\theta_k^2+\varepsilon (1+\varepsilon)\theta_k^2+o(\theta_k^2)\right)\right), \label{eq:rho0lower-tail-lower-exponent}
\end{align}
where we used $(k-1)\theta_k^2=k\theta_k^2+o(k\theta_k^2)$.
What remains to show is that $\P(\alpha H_{k,n}'\le 1-\theta_k)\to1.$
Similarly to the derivation of~\eqref{eq:haan-soc-rewritten}, we rewrite~\eqref{eq:haan-soc} with $\varepsilon=\kappa$ and $\delta=\kappa/5$ (using that $\rho=0$, so that $\int_1^{e^y}u^{\rho-1}du=y$) to
\begin{equation}\label{eq:haan-soc-rewritten-upper}
    \alpha\log\frac{U(te^y)}{U(t)}-y\le\alpha\log\left(1+yA^*(t)+\kappa A^*(t)e^{\kappa y/5}\right)\le \alpha yA^*(t)+\alpha\kappa A^*(t)e^{\kappa y/5},
\end{equation}
valid for all $y\ge0$ and $t\ge t_0(\kappa,\kappa/5)$.
Write $Y_i':=\sum_{j=i}^{k-1}\Delta_j'/j$ for $i\in[k-1]$. On $\{E_{k,n}\ge\log(n/k)-1\}$, which has probability tending to one by \cref{lem:intermediate-ld}, we have $e^{E_{k,n}}\ge t_0(\kappa,\kappa/5)$ for all sufficiently large $k$, and~\eqref{eq:haan-soc-rewritten-upper} with $t=e^{E_{k,n}}$ and $y=Y_i'$ leads to
\begin{align*}
    \alpha H_{k,n}'\le \frac{1+\alpha A^*(e^{E_{k,n}})}k\sum_{i\in[k-1]}Y'_{i}+\frac{\alpha\kappa A^*(e^{E_{k,n}})}{k}\sum_{i\in[k-1]}e^{\kappa Y'_i/5}.
\end{align*}
By the Rényi representation, the (multi-)set $\{Y_i'\}_{i\in[k-1]}$ is equal in distribution to $k-1$ i.i.d.\ exponential random variables with mean $m_k=1-(1+\varepsilon)\theta_k$. Hence, by Chebyshev's inequality, $\frac1k\sum_{i\in[k-1]}Y_i'=1-(1+\varepsilon)\theta_k+\bigO_\P(k^{-1/2})$ (the factor $(k-1)/k$ contributes $\bigO_\P(1/k)=o_\P(k^{-1/2})$), while $\frac1k\sum_{i\in[k-1]}e^{\kappa Y_i'/5}=\bigO_\P(1)$ by Markov's inequality, since $\E[e^{\kappa Y_i'/5}]=(1-\kappa m_k/5)^{-1}$ is bounded. This gives
\begin{align*}
    \alpha H_{k,n}'
    \le&\left(1+\alpha A^*(e^{E_{k,n}})\right)\left(1-(1+\varepsilon)\theta_k+\bigO_\P(k^{-1/2})\right)+\bigO_\P\left(A^*(e^{E_{k,n}})\right)\\
    =&1-(1+\varepsilon)\theta_k+\bigO_\P\left(k^{-1/2}+A^*(e^{E_{k,n}})\right).
\end{align*}
Since $A^*$ is non-increasing, on $\{E_{k,n}\ge\log(n/k)-1\}$ we have $A^*(e^{E_{k,n}})\le A^*(e^{-1}n/k)=\bigO(\theta_k^{1+\kappa})$ by~\eqref{eq:astar-local}, so that $A^*(e^{E_{k,n}})=\bigO_\P(\theta_k^{1+\kappa})=o_\P(\theta_k)$. Together with $\theta_k\gg k^{-1/2}$, the above is $1-\theta_k-(\varepsilon+o_\P(1))\theta_k$, so that $\P(\alpha H_{k,n}'\le 1-\theta_k)\to1.$
We conclude that
\[
\P(\alpha H_{k,n}\le 1-\theta_k)\ge \exp\left(-k\left(\frac12(1+\varepsilon)^2\theta_k^2+\varepsilon (1+\varepsilon)\theta_k^2+o(\theta_k^2)\right)+o(1)\right).
\]
Sending $\varepsilon\downarrow0$ results in the desired
\[
\P(\alpha H_{k,n}\le1-\theta_k)\ge \exp(-\frac12k\theta_k^2(1+o(1))),
\]
which, combined with the upper bound above, proves $\lim_{n\gg k\gg1}\frac{-2}{k\theta_k^2}\log\P(\alpha H_{k,n}<1-\theta_k)=1$ for $\rho=0$.

\paragraph{Upper tail.} We now turn to $\P(\alpha H_{k,n}>1+\theta_k)$. Since $\lambda_k:=\frac k2\theta_k^2=o(k)$, \cref{lem:intermediate-ld} allows us to pick $\eta=e^\kappa$ so that $\P(E_{k,n}<e_{k,n})\le e^{-\Theta(k)}$ is negligible and \cref{cor:conditioning} lets us reduce to bounding $\P(\alpha H_{k,n}>1+\theta_k\mid E_{k,n}\ge e_{k,n})$ from above and below.

\emph{Upper bound.} For $s>0$, write $g(z,u):=\left(U(e^{u+z})/U(e^u)\right)^{\alpha s}$ and $h(u):=\E[g(E,u)]$. Define $\E'[\cdot]:=\E[\cdot\mid E_{k,n}\ge e_{k,n}]$, so that $\E'\left[e^{sk\alpha H_{k,n}}\right]=\E'\left[h(E_{k,n})^{k-1}\right]$, so that
\[
\P(\alpha H_{k,n}>1+\theta_k\mid E_{k,n}\ge e_{k,n})\le\E'\left[h(E_{k,n})^{k-1}\right]e^{-sk(1+\theta_k)}.
\]
Since $e^{E_{k,n}}\ge t_0(\kappa,\kappa/5)$ on $\{E_{k,n}\ge e_{k,n}\}$, we now use~\eqref{eq:haan-soc-rewritten-upper} in place of~\eqref{eq:haan-soc-rewritten}. Let $y_0':=y_0$ be as in~\eqref{eq:y0-def}, so that $e^{-y_0'}=\bigO(\theta_k^{5/2})$, and split
\[
h(u)=\E\left[g(E,u)\1\{E\le y_0'\}\right]+\E\left[g(E,u)\1\{E> y_0'\}\right].
\]
For the first term, for $u\ge e_{k,n}$ and $0\le y\le y_0'$, \eqref{eq:haan-soc-rewritten-upper} together with $A^*(e^u)\le A^*(e^{-\kappa}n/k)$ and~\eqref{eq:y0-def} gives
\[
\alpha\log\frac{U(e^{u+y})}{U(e^u)}\le y+\alpha\left(\kappa e^{\kappa y_0'/5}+y_0'\right)A^*(e^{-\kappa}n/k)=y+\alpha\theta_k^{1+\kappa/2},
\]
so that, using $\E[e^{sE}]=(1-s)^{-1}$ for $s<1$,
\[
\E\left[g(E,u)\1\{E\le y_0'\}\right]\le e^{\alpha s\theta_k^{1+\kappa/2}}\E\left[e^{sE}\right]=(1-s)^{-1}\exp\left(\bigO\left(s\theta_k^{1+\kappa/2}\right)\right).
\]
Unlike in the lower tail, the second term cannot be bounded by $\P(E>y_0')$, since $g(z,u)\ge1$ now contains a positive power. Instead, we use the Potter bound from \cref{lem:potter-rewritten}: since $e^{e_{k,n}}\to\infty$, we have $\varepsilon(e^{e_{k,n}})\le1$ for all sufficiently large $k$, so that $U(e^{u+y})/U(e^u)\le2e^{2y/\alpha}$ for all $u\ge e_{k,n}$ and $y\ge0$. Hence, for $s\le1/4$,
\begin{align*}
\E\left[g(E,u)\1\{E>y_0'\}\right]&\le2^{\alpha s}\int_{y_0'}^\infty e^{2sy}e^{-y}\,dy\\
&=\frac{2^{\alpha s}}{1-2s}e^{-(1-2s)y_0'}\le\frac{2^{\alpha s}}{1-2s}\left(e^{-y_0'}\right)^{1-2s}.
\end{align*}
We will take $s=\theta_k/(1+\theta_k)\to0$ below. Since $e^{-y_0'}=\bigO(\theta_k^{5/2})$ and $\theta_k^{-5s}=e^{-5s\log\theta_k}\to1$, the right-hand side is $\bigO(\theta_k^{5/2-5s})=\bigO(\theta_k^{5/2})$. Combining both terms and using $(1-s)^{-1}\ge1$ and $\kappa<1$, we obtain, uniformly over $u\ge e_{k,n}$,
\[
h(u)\le(1-s)^{-1}\left(\exp\left(\bigO(\theta_k^{2+\kappa/2})\right)+\bigO\left(\theta_k^{5/2}\right)\right)=(1-s)^{-1}\exp\left(\bigO(\theta_k^{2+\kappa/2})\right),
\]
so that $\E'[h(E_{k,n})^{k-1}]\le\left((1-s)^{-1}\exp(\bigO(\theta_k^{2+\kappa/2}))\right)^{k-1}$. Substitute $s=\theta_k/(1+\theta_k)$, so that $1-s=(1+\theta_k)^{-1}$ and $sk(1+\theta_k)=k\theta_k$. Since
\[
-\log(1-s)=\log(1+\theta_k)=\theta_k-\frac12\theta_k^2+\bigO(\theta_k^3),
\]
combining gives
\begin{align*}
\P(\alpha H_{k,n}>1+\theta_k\mid E_{k,n}\ge e_{k,n})
&\le\exp\left((k-1)\left(\theta_k-\tfrac12\theta_k^2+o(\theta_k^2)\right)-k\theta_k\right)\\
&=\exp\left(-\tfrac k2\theta_k^2+o(k\theta_k^2)\right),
\end{align*}
using $\theta_k=o(k\theta_k^2)$ as before, and \cref{cor:conditioning} gives $\log\P(\alpha H_{k,n}>1+\theta_k)\le-\frac{k\theta_k^2}2(1+o(1))$.

\emph{Lower bound.} We mirror the tilting argument of the lower tail with $\theta_k$ replaced by $-\theta_k$. Let $\Delta_1',\dots,\Delta_{k-1}'$ be i.i.d.\ exponential random variables with mean $\beta_k:=1+(1+\varepsilon)\theta_k$, independent of $\Delta_k,\dots,\Delta_n$, so that tilting by $e^{-\frac{(1+\varepsilon)\theta_k}{1+(1+\varepsilon)\theta_k}\Delta_i'}$ yields standard exponentials, and define $H_{k,n}'$ from $(\Delta_{1:k-1}',E_{k,n})$ as before.
Defining $\mathcal L_k^+:=\left\{\frac1{k-1}\sum_{i\in[k-1]}\Delta_i'\le1+(1+2\varepsilon)\theta_k\right\}$, we can again conclude $\P(\mathcal L_k^+)\to1$ by the same CLT argument, since $\varepsilon\theta_k\gg k^{-1/2}$.
By repeating the steps from the lower bound of the lower tail, we obtain an analogous expression as~\eqref{eq:rho0lower-tail-lower-exponent}:
\begin{align*}
\P(\alpha H_{k,n}>1+\theta_k)
\ge&\P(\alpha H_{k,n}'>1+\theta_k,\mathcal L_k^+)\\
&\times\exp\left(-k\left(\tfrac12(1+\varepsilon)^2\theta_k^2+\varepsilon(1+\varepsilon)\theta_k^2+o(\theta_k^2)\right)\right).
\end{align*}
It remains to show $\P(\alpha H_{k,n}'>1+\theta_k)\to1$. Write $y_i:=\sum_{j=i}^{k-1}\Delta_j'/j$ for $i\in[k-1]$; by the Rényi representation, the (multi-)set $\{y_i\}_{i\in[k-1]}$ is equal in distribution to $k-1$ i.i.d.\ exponential random variables with mean $\beta_k>1$. We use~\eqref{eq:haan-soc-rewritten} with $\varepsilon=\kappa$ and $\delta=\kappa/5$ to bound $\alpha H_{k,n}'$ from below. This bound is valid only when $(\kappa e^{\kappa y/5}+y)A^*(t)<1$, so it cannot simply be summed over all $k-1$ gaps $y_i$: their maximum is of order $\log k$, which can exceed the validity threshold $\Theta(\log(1/A^*(\cdot)))$ if $A^*$ decays slowly enough (as is permitted when $\rho=0$). We distinguish at $y_i\le y_0'$ and $y_i>y_0'$. For $y\le y_0'$ and under $\{E_{k,n}\ge e_{k,n}\}$, we have $A^*(e^{E_{k,n}})\le A^*(e^{-\kappa}n/k)$, so that
\[
(\kappa e^{\kappa y/5}+y)A^*(e^{E_{k,n}})\le(\kappa e^{\kappa y_0'/5}+y_0')A^*(e^{-\kappa}n/k)=\theta_k^{1+\kappa/2}\to0,
\]
so~\eqref{eq:haan-soc-rewritten} applies there. Using $\log(1-x)\ge-2x$ for $x\in[0,\tfrac12]$, we conclude that for $0\le y\le y_0'$,
\begin{align*}
\alpha\log\frac{U(e^{E_{k,n}+y})}{U(e^{E_{k,n}})}
&\ge y-2\alpha\left(\kappa e^{\kappa y/5}+y\right)A^*(e^{E_{k,n}})\\
&\ge y-2\alpha \theta_k^{1+\kappa/2}.
\end{align*}
For $y_i>y_0'$ we instead use the trivial bound $\alpha\log\bigl(U(e^{E_{k,n}+y_i})/U(e^{E_{k,n}})\bigr)\ge0$, valid unconditionally since $U$ is non-decreasing. Combining,
\begin{align*}
\alpha H_{k,n}'
&\ge\frac1k\sum_{i=1}^{k-1}y_i\mathbb1\{y_i\le y_0'\}-\frac{2\alpha \theta_k^{1+\kappa/2}}k\sum_{i=1}^{k-1}\mathbb1\{y_i\le y_0'\}\\
&\ge \frac1k\sum_{i=1}^{k-1}y_i\mathbb1\{y_i\le y_0'\}-2\alpha\theta_k^{1+\kappa/2}.
\end{align*}
Notice that the sum is an average of $k-1$ i.i.d.\ truncated exponential variables. By the exact identity
\[
\E\left[Y\mathbb1\{Y\le y_0'\}\right]=\beta_k-(\beta_k+y_0')e^{-y_0'/\beta_k},\qquad Y\sim\text{Exp}(1/\beta_k),
\]
these truncated random variables have expectation $\beta_k-(\beta_k+y_0')e^{-y_0'/\beta_k}$ and bounded variance, so that the deviations of their average are $\bigO_\P(k^{-1/2})=o_\P(\theta_k)$ by Chebyshev's inequality (the factor $(k-1)/k$ again contributes $\bigO_\P(1/k)=o_\P(k^{-1/2})$). For sufficiently large $k$, we have $\beta_k+y_0'\le e^{y_0'/4}$ and $1/\beta_k\ge3/4$, since $y_0'\to\infty$ and $\beta_k\to1$, so that $(\beta_k+y_0')e^{-y_0'/\beta_k}\le e^{-y_0'/2}=\bigO(\theta_k^{5/4})=o(\theta_k)$. Together, this results in
\[
\alpha H_{k,n}'\ge \beta_k+o_\P(\theta_k)=1+(1+\varepsilon+o_\P(1))\theta_k,
\]
so that $\P(\alpha H_{k,n}'>1+\theta_k)\to1,$ as required.
We conclude that
\[
\P(\alpha H_{k,n}>1+\theta_k)\ge\exp\left(-k\left(\tfrac12(1+\varepsilon)^2\theta_k^2+\varepsilon(1+\varepsilon)\theta_k^2+o(\theta_k^2)\right)+o(1)\right),
\]
and sending $\varepsilon\downarrow0$ gives $\P(\alpha H_{k,n}>1+\theta_k)\ge\exp(-\frac12k\theta_k^2(1+o(1)))$, which, combined with the upper bound above, proves $-\log\P(\alpha H_{k,n}>1+\theta_k)\sim \frac{k}{2}\theta_k^2$ for $\rho=0$, which completes the proof.
\end{proof}

%% file: appendix.tex
\section{Large deviations of Pickands estimator}\label{apx:pickands}
Let $k$ be divisible by $4$. Pickands estimator is given\footnote{Typically, the Pickands estimator is given as a function of $X_{k,n},X_{2k,n},X_{4k,n}$, but we divide by $4$ so that the estimator uses the same largest $k$ observations as the Hill estimator.} by
\begin{equation}\label{eq:pickands}
    P_{k,n}=\frac{\log\left(\frac{X_{k/4,n}}{X_{k/2,n}}-1\right)-\log\left(1-\frac{X_{k,n}}{X_{k/2,n}}\right)}{\log 2}.
\end{equation}
Pickands estimator is consistent (i.e., $P_{k,n}\stackrel\P\to\frac1\alpha$) since both $\frac{X_{k/4,n}}{X_{k/2,n}}$ and $\frac{X_{k/2,n}}{X_{k,n}}$ converge in probability to $2^{1/\alpha}$.

\begin{theorem}\label{thm:pickands}
    Consider the estimator $\hat\alpha_{k,n}^{(P)}(\theta)=\frac{1-\theta}{P_{k,n}}$. The decay rate of $\hat\alpha_{k,n}^{(P)}(\theta)$ is at most
    \[
    \lambda_k=\frac k4\left(-\theta\log2-\log(2-2^{\theta})\right).
    \]
    For $\theta\in(0,1),$ this is strictly less than the decay rate of the Hill estimator that is proven in~\cref{cor:rescaled-hill-ld}.
\end{theorem}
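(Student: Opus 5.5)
The plan is to prove the upper bound by exhibiting one regularly varying distribution under which the rescaled Pickands estimator overestimates $\alpha$ with probability at least $\exp(-\lambda_k(1+o(1)))$. By \cref{def:conservative}, a single such distribution rules out any larger decay rate. I would take the exact Pareto case $U(t)=t^{1/\alpha}$. The Pickands estimator is scale-invariant and $\alpha$ only rescales the exponentials, so I expect the computation to reduce to a fixed $\alpha$, say $\alpha=1$. I flag below where this reduction may fail.

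In the Pareto case, by \eqref{eq:exp-spacing-coupling}, $\log(X_{i,n}/X_{j,n})=(E_{i,n}-E_{j,n})/\alpha$. The estimator therefore depends only on the two spacings
\[
a:=E_{k/4,n}-E_{k/2,n}=\sum_{j=k/4}^{k/2-1}\frac{\Delta_j}{j},\qquad b:=E_{k/2,n}-E_{k,n}=\sum_{j=k/2}^{k-1}\frac{\Delta_j}{j}.
\]
These are independent, being functions of disjoint blocks of $\Delta$'s, and their typical values are both $\log 2$. With $x=e^{a/\alpha}$ and $y=e^{b/\alpha}$, the event $\{\hat\alpha^{(P)}_{k,n}(\theta)>\alpha\}$ becomes
\[
\Big\{(x-1)\big/(1-1/y)<2^{(1-\theta)/\alpha}\Big\}.
\]
By the Rényi representation, $b\stackrel d=E_{k/2,k-1}$ and $a\stackrel d=E_{k/4,k/2-1}$. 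Each is therefore an order statistic of i.i.d.\ exponentials, so its tails are binomial tails. Exactly as in the proof of \cref{lem:intermediate-ld}, the Chernoff--Hoeffding bound and its standard matching lower bound (Sanov/Cramér for Bernoulli sums) give LDPs: $a$ at speed $k/2$ with rate $\tfrac12 D(\tfrac12\|e^{-\cdot})$-type, and $b$ at speed $k$ with an analogous rate.

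By independence, $-\frac1k\log\P(\cdot)$ converges to the infimum, over $(a,b)$ in the (open) target region, of $I_a(a)+I_b(b)$. The decay rate is therefore at most $k$ times this infimum. The next step is to evaluate the infimum, or at least to bound it by $\tfrac14(-\theta\log2-\log(2-2^\theta))$, by exhibiting a good deviation path. The obvious candidates are the following.
\begin{itemize}
\item[(i)] Keep $a$ typical and deviate only $b$, requiring $e^{b}>2/(2-2^\theta)$ when $\alpha=1$. Since $D(\tfrac12\|p)=-\tfrac12\log(4p(1-p))$ with $p=(2-2^\theta)/2$, this gives the exponent $-\log(2^\theta(2-2^\theta))$ in the desired form, though the prefactor it produces must be checked against $k/4$.
\item[(ii)] Keep $b$ typical and deviate only $a$.
\item[(iii)] Deviate both jointly.
\end{itemize}
I expect the main obstacle to be this optimization. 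My rough computation of path (i) gives the right functional form but possibly the prefactor $k/2$ rather than $k/4$. I would first recheck the Rényi indexing and the speeds, then optimize jointly over $(a,b)$, and also over $\alpha$, since the decay rate must hold for every $\alpha$ and the worst $\alpha$ is allowed. For the lower bound on the probability of an open set, I would use an explicit box around the optimizer together with independence. This avoids a full contraction principle.

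Finally, I would show strictness: $\tfrac14\big(-\log(2^\theta(2-2^\theta))\big)<f(\theta)$ on $(0,1)$. Near $0$, we have $2^\theta(2-2^\theta)=1-(\theta\log2)^2+\bigO(\theta^3)$, so the left side is about $\tfrac{(\log2)^2}{4}\theta^2<\tfrac12\theta^2$. Near $1$, the left side grows like $-\tfrac14\log(1-\theta)$ while $f$ grows like $-\log(1-\theta)$. On the whole interval, I would compare derivatives. Writing $g(\theta)$ for the left side, $f'(\theta)=\theta/(1-\theta)$, and I would verify $g'<f'$ using $2-2^\theta\ge(1-\theta)\cdot$const and the convexity of $2^\theta$. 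Together with $g(0)=f(0)=0$, this yields the claim. Combining with \cref{cor:rescaled-hill-ld} finishes the comparison.
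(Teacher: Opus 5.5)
\paragraph{Verdict: gap in the decisive step.}
Several parts of your proposal are sound:
\begin{itemize}
\item Exact Pareto suffices for an ``at most'' claim.
\item The reduction to the independent spacings $a,b$ and the binomial-tail description of their laws are correct.
\item Your strictness argument works with constant $1$: convexity gives $2^\theta-1\le\theta$ and $2-2^\theta\ge1-\theta$, hence $g'(\theta)=\frac{\log2}{2}\frac{2^\theta-1}{2-2^\theta}\le\frac{\log2}{2}\frac{\theta}{1-\theta}<f'(\theta)$.
\end{itemize}
The gap is that you never exhibit a deviation costing at most $\lambda_k$, which is the statement being proved.

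\paragraph{Why path (i) only gives $2\lambda_k$.}
The only path you evaluate, (i), genuinely costs
\[
kD\left(\tfrac12\middle\|\tfrac{2-2^\theta}{2}\right)=\frac k2\left(-\theta\log2-\log(2-2^\theta)\right)=2\lambda_k .
\]
The reason is that $b\stackrel d=E_{k/2,k-1}$ requires $k/2$ of $k-1$ exponentials to exceed a level, so it moves at speed $k$. Rechecking the R\'enyi indexing will not rescue this. The functional form matches only because $D(\tfrac12\|p)=D(\tfrac12\|1-p)$. Separately, at speed $k/2$ the rate of $a$ is $D(\tfrac12\|e^{-a})$, not $\tfrac12D$; the extra $\tfrac12$ in your LDP statement double-counts. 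As written, the proposal therefore only yields the weaker bound $2\lambda_k$, and leaves the needed optimization open.

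\paragraph{The missing idea: deviate $a$ alone.}
This is your path (ii). Since $a\stackrel d=E_{k/4,k/2-1}$ is the median of only $k/2-1$ exponentials, it is half as costly to move as $b$.
\begin{itemize}
\item Probability of the deviation: $\P(a<(1-\theta)\log2)=\P(\text{Bin}(k/2-1,2^{\theta-1})\le k/4-1)\ge\exp\left(-\frac k2D(\tfrac12\|2^{\theta-1})+o(k)\right)$, by a Stirling lower bound on a single binomial atom.
\item Its cost is exactly the target: $\frac k2D(\tfrac12\|2^{\theta-1})=-\frac k4\log(2^\theta(2-2^\theta))=\lambda_k$.
\item On this event $x<2^{(1-\theta)/\alpha}$. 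Since $b$ is independent of $a$ and concentrates at $\log2$, we get $(x-1)/(1-1/y)\le(2^{(1-\theta)/\alpha}-1)/(1-2^{-1/\alpha})+o_\P(1)$.
\item This bound is strictly below $2^{(1-\theta)/\alpha}$, because that inequality is equivalent to $2^{-\theta/\alpha}<1$.
\end{itemize}
Hence $\P(\hat\alpha^{(P)}_{k,n}(\theta)>\alpha)\ge e^{-\lambda_k+o(k)}$ for every $\alpha$. No joint optimization over $(a,b)$ or over $\alpha$ is needed. This is exactly the paper's argument; the paper runs it for general $U\in\mathcal{RV}'_{1/\alpha}$ via the Potter bounds rather than for exact Pareto.
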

\begin{proof}
    Recall that
    \[
    X_{k,n}=U(e^{E_{k,n}}),
    \]
    where $E_{k,n}$ is the $k$-th largest among $n$ independent standard exponentials.
    Then
    \[
    \frac{X_{k/4,n}}{X_{k/2,n}}=\frac{U(e^{M_k}e^{E_{k/2,n}})}{U(e^{E_{k/2,n}})},
    \]
    where $M_k=E_{k/4,n}-E_{k/2,n}\stackrel d=E_{k/4,k/2-1}$ corresponds to the median of $k/2-1$ independent exponentials.
    
    We have $M_k\stackrel \P\to\log 2$. Additionally, by the Rényi representation, $M_k=\sum_{i=k/4}^{k/2-1}\Delta_i/i$ is a function of $\Delta_{k/4},\dots,\Delta_{k/2-1}$ only, so that $M_k$ is independent of the pair $(E_{k/2,n}-E_{k,n},E_{k,n})$, which is a function of $\Delta_{k/2},\dots,\Delta_n$. Moreover, $E_{k/2,n}\stackrel\P\to\infty$. Since $M_k\ge0$ and, by \cref{lem:potter-rewritten}, $U(e^{m}e^{t})/U(e^t)=e^{m/\alpha}(1+o(1))$ uniformly in $m\in[0,\log2]$ as $t\to\infty$, it follows that
    \[
    \frac{X_{k/4,n}}{X_{k/2,n}}\stackrel\P\sim e^{\frac1\alpha M_k},
    \]
    and, by the independence just noted, this holds also conditionally on any event of the form $\{M_k\le m\}$ with $m\le\log2$, as does $X_{k/2,n}/X_{k,n}\stackrel\P\to2^{1/\alpha}$.
    We are interested in the large deviations event $\{M_k<(1-\theta)\log2\}$. For each exponential, the probability that it is bigger than $(1-\theta)\log2$ is $(1/2)^{1-\theta}$. Since $M_k$ is the $(k/4)$-th largest of $k/2-1$ exponentials, $M_k$ is below $(1-\theta)\log2$ if and only if at most $k/4-1$ of them exceed this value. This leads to
    \begin{align*}
    \P(M_k<(1-\theta)\log2)
    &=\P(\text{Bin}(k/2-1,(1/2)^{1-\theta})\le k/4-1)\\
    &\ge \P(\text{Bin}(k/2-1,(1/2)^{1-\theta})=k/4-1)\\
    &\ge \frac{e^{-1/6}}{\sqrt{2\pi(k/4-1)}}\exp\left(-\left(\frac k2-1\right)D\left(a_k\middle\|\left(\frac12\right)^{1-\theta}\right)\right),
    \end{align*}
    where $a_k:=\frac{k/4-1}{k/2-1}$ and the last step is an application of~\cref{lem:bin-lower}.
    Since $a_k\to\frac12$ and $a\mapsto D(a\|p)$ is continuous on $(0,1)$ for fixed $p\in(0,1)$, we have
    \[
    \left(\frac k2-1\right)D\left(a_k\middle\|\left(\frac12\right)^{1-\theta}\right)= \frac k2D\left(\frac12\middle\|\left(\frac12\right)^{1-\theta}\right)+o(k).
    \]
    We now write
    \begin{align*}
        \P(\alpha P_{k,n}<1-\theta)\ge\P(M_k<(1-\theta)\log2)\P(\alpha P_{k,n}<1-\theta\ |\ M_k<(1-\theta)\log2).
    \end{align*}
    Now, using the fact that $\alpha P_{k,n}$ is increasing in $X_{k/4,n}/X_{k/2,n}$ and the fact that, conditionally on $\{M_k<(1-\theta)\log2\}$, $X_{k/4,n}/X_{k/2,n}\stackrel\P\sim e^{\frac1\alpha M_k}\le 2^{(1-\theta)/\alpha}$ while $X_{k/2,n}/X_{k,n}\stackrel\P\to2^{1/\alpha}$ (as shown above), we have
    \begin{align*}
    \alpha P_{k,n}
    &\le \alpha\frac{\log\left(2^{(1-\theta)/\alpha}-1\right)-\log\left(1-2^{-1/\alpha}\right)}{\log 2}+o_\P(1)\\
    &=1-\theta+\alpha\frac{\log(1-2^{-\frac{1-\theta}\alpha})-\log(1-2^{-\frac{1}\alpha})}{\log 2}+o_\P(1).
    \end{align*}
    Now, since $\log(1-2^{-\frac{1-\theta}\alpha})<\log(1-2^{-\frac{1}\alpha})$, the above is smaller than $1-\theta$ with high probability, so that
    \[
    \P(\alpha P_{k,n}<1-\theta\ |\ M_k<(1-\theta)\log2)\to1.
    \]
    We conclude that 
    \[
    \P(\alpha P_{k,n}<1-\theta)\ge \exp\left(-\frac k2D\left(\frac12\middle\|\left(\frac12\right)^{1-\theta}\right)+o(k)\right),
    \]
    while
    \[
D\left(\frac12\middle\|\left(\frac12\right)^{1-\theta}\right)=-\frac\theta2\log2-\frac12\log(2-2^\theta),
    \]
    which completes the proof. The fact that this is strictly worse than the Hill estimator is proved in~\cref{lem:pickands-rate}.
\end{proof}

\begin{lemma}\label{lem:pickands-rate}
    The Pickands rate is strictly worse than that of the Hill estimator, i.e.,
    \[
    -\frac\theta4\log2-\frac14\log(2-2^\theta)<-\theta-\log(1-\theta),
    \]
    for all $\theta\in(0,1).$
\end{lemma}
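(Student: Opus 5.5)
Set $g(\theta):=f(\theta)-p(\theta)$ with
\[
p(\theta):=-\tfrac\theta4\log2-\tfrac14\log(2-2^\theta),\qquad f(\theta)=-\theta-\log(1-\theta).
\]
The claim is $g>0$ on $(0,1)$. Since $g(0)=0$, it is enough to show $g'(\theta)>0$ for $\theta\in(0,1)$ and then integrate from $0$. The derivatives are
\[
f'(\theta)=\frac{\theta}{1-\theta},\qquad p'(\theta)=-\frac{\log 2}{4}+\frac{\log2}{4}\cdot\frac{2^\theta}{2-2^\theta}=\frac{\log2}{2}\cdot\frac{2^\theta-1}{2-2^\theta}.
\]
So the whole statement reduces to the elementary inequality
\[
\frac{\log2}{2}\cdot\frac{2^\theta-1}{2-2^\theta}<\frac{\theta}{1-\theta},\qquad\theta\in(0,1).
\]

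I would prove this by bounding numerator and denominator separately.

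\textbf{Numerator.} By convexity of $\theta\mapsto2^\theta$, the chord through $(0,1)$ and $(1,2)$ lies above the graph, so $2^\theta-1\le\theta$ on $[0,1]$.

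\textbf{Denominator.} The same chord bound gives $2^\theta\le1+\theta$, hence $2-2^\theta\ge1-\theta>0$.

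Combining the two, the left-hand side is at most $\frac{\log2}{2}\cdot\frac{\theta}{1-\theta}$. Since $\frac{\log2}{2}<1$ and $\frac{\theta}{1-\theta}>0$ for $\theta\in(0,1)$, this is strictly smaller than $\frac{\theta}{1-\theta}$. Therefore $g'>0$ on $(0,1)$, and $g(\theta)=\int_0^\theta g'>0$, which is the claim.

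I do not expect a real obstacle here, since the convexity bound on $2^\theta$ does all the work. The only points to check are the sign of $2-2^\theta$ (it is positive because $2^\theta<2$) and that $p$ is differentiable on $[0,1)$, which also holds because $2-2^\theta>0$ there.

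As a sanity check, near $0$ we have $p(\theta)\approx\frac{(\log 2)^2}{8}\theta^2$ and $f(\theta)\approx\frac12\theta^2$, which agrees with the strict inequality. Near $1$, $p$ stays bounded (with $p(1)=-\frac14\log 2<0$) while $f\to\infty$, so the inequality is also consistent at that end.
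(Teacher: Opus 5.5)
Your proof is correct and is essentially the paper's argument: both show that $f-p$ vanishes at $0$ and has derivative at least $(1-\tfrac{\log 2}{2})\tfrac{\theta}{1-\theta}>0$ via the chord bound $2^\theta-1\le\theta$, and your separate numerator/denominator bounds do the job of the paper's termwise comparison of the geometric series $\sum_j(2^\theta-1)^j$ and $\sum_j\theta^j$. Your closing sanity remarks are wrong, though the proof does not use them: $p(\theta)\sim\tfrac{(\log 2)^2}{4}\theta^2$ near $0$ (not $\tfrac{(\log 2)^2}{8}\theta^2$), and since $2-2^\theta\to0$, $p(\theta)\to+\infty$ as $\theta\uparrow1$, so $p$ is not bounded there and $p(1)\neq-\tfrac14\log 2$ (indeed $p\ge0$, being half a Kullback--Leibler divergence).
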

\begin{proof}
    We write
    \[
    h(\theta)=\theta\log2+\log(2-2^\theta)-4\theta-4\log(1-\theta),
    \]
    and will show that $h(\theta)>0.$ Note that $h(0)=0$. We calculate
    \begin{align*}
    h'(\theta)
    &=\log2-\frac{2^\theta\log2}{2-2^\theta}-4+\frac{4}{1-\theta}\\
    &=\left(1-\frac{2^\theta}{2-2^\theta}\right)\log2+4\frac\theta{1-\theta}\\
    &=-\frac{2^\theta-1}{1-(2^\theta-1)}2\log2+4\frac\theta{1-\theta}\\
    &=2\sum_{j=1}^\infty\left[2\theta^j-\log(2)\left(2^\theta-1\right)^j\right]\\
    &\ge 2(2-\log2)\sum_{j=1}^\infty\theta^j,
    \end{align*}
    where the last step follows from the fact that $2^\theta-1\le\theta$ for $\theta\in[0,1]$.
    We conclude that $h'(0)=0$ and $h'(\theta)>0$ for $\theta\in(0,1)$, so that $h(\theta)>0$ for $\theta\in(0,1)$.
\end{proof}

The following lemma is a variant of the Bahadur--Ranga Rao bound for binomial tails, tailored to our needs:
\begin{lemma}\label{lem:bin-lower}
    For $1\le k< n$ and $p\in(0,1)$, we have
    \[
    \P(\text{Bin}(n,p)= k)\ge \frac{e^{-1/6}}{\sqrt{2\pi k}}\exp\left(-nD(k/n\|p)\right),
    \]
    where $D(x\|y)=x\log\frac{x}{y}+(1-x)\log\frac{1-x}{1-y}$ denotes the Kullback--Leibler divergence from $y$ to $x$.
\end{lemma}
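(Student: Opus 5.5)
The plan is to prove the bound directly from the exact binomial point mass and Stirling's formula with explicit error terms, since $\P(\text{Bin}(n,p)=k)=\binom nk p^k(1-p)^{n-k}$ and $p^k(1-p)^{n-k}=\exp\left(-n\left[\tfrac kn\log\tfrac1p+\left(1-\tfrac kn\right)\log\tfrac1{1-p}\right]\right)$. Writing $x=k/n$, the task reduces to the purely combinatorial inequality
\[
\binom nk\ \ge\ \frac{e^{-1/6}}{\sqrt{2\pi k}}\exp\left(nH(x)\right),\qquad H(x):=-x\log x-(1-x)\log(1-x),
\]
because $nH(x)-n\left[x\log\tfrac1p+(1-x)\log\tfrac1{1-p}\right]=-nD(x\|p)$. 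This is the only nontrivial step.

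For the binomial coefficient I will use Robbins' two-sided Stirling bounds, $\sqrt{2\pi m}\,(m/e)^m e^{1/(12m+1)}\le m!\le\sqrt{2\pi m}\,(m/e)^m e^{1/(12m)}$ for $m\ge1$. The lower bound goes on $n!$ and the upper bounds on $k!$ and $(n-k)!$, which is allowed since $1\le k<n$ makes both $k$ and $n-k$ at least $1$. The powers of $e$ cancel and the powers combine to $n^n/(k^k(n-k)^{n-k})=e^{nH(x)}$. This leaves the prefactor
\[
\sqrt{\frac{n}{2\pi k(n-k)}}\;\exp\left(\frac1{12n+1}-\frac1{12k}-\frac1{12(n-k)}\right).
\]

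Two elementary bounds finish the argument. First, $n/(n-k)\ge1$, so the square root is at least $1/\sqrt{2\pi k}$. Second, $k\ge1$ and $n-k\ge1$, so $\frac1{12k}+\frac1{12(n-k)}\le\frac16$, and dropping the positive term $\frac1{12n+1}$ makes the exponential at least $e^{-1/6}$. Multiplying these gives the claim.

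I do not expect a real obstacle. The only care needed is using the correct direction of Robbins' bounds for each factorial and checking that the edge case $n-k=1$ is covered, which is why the constant is $e^{-1/6}$ rather than something sharper.
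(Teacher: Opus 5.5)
Your proposal is correct and follows essentially the same route as the paper: Stirling bounds on the binomial coefficient (lower on $n!$, upper on $k!$ and $(n-k)!$), then $\frac1k+\frac1{n-k}\le 2$ and $n/(n-k)\ge 1$ to reach $e^{-1/6}/\sqrt{2\pi k}$. The only cosmetic difference is that you use Robbins' sharper lower bound on $n!$ and then discard the factor $e^{1/(12n+1)}$, whereas the paper uses the plain bound $\sqrt{2\pi m}\,(m/e)^m\le m!$ from the start.
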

\begin{proof}
    We have
    \[
        \P(\text{Bin}(n,p)= k)=\binom{n}{k}p^k(1-p)^{n-k}.
    \]
    We use the Stirling bounds
    \[
    \sqrt{2\pi m}\left(\frac me\right)^m\le m!\le \sqrt{2\pi m}\left(\frac me\right)^me^{\frac1{12m}},
    \]
    which hold for all $m\ge1$. These lead to
    \[
    \binom{n}{k}\ge \frac{e^{-\frac1{12}\left(\frac1k+\frac1{n-k}\right)}}{\sqrt{2\pi k(n-k)/n}}\frac{n^n}{k^k(n-k)^{n-k}}.
    \]
    We bound $\frac1k+\frac1{n-k}\le2$ and $(1-k/n)^{-1/2}\ge1$ to obtain
    \[
    \binom{n}{k}\ge \frac{e^{-1/6}}{\sqrt{2\pi k}}\exp\left(n\left(-\frac kn\log\frac kn-\left(1-\frac kn\right)\log\left(1-\frac kn\right)\right)\right).
    \]
    This leads to
    \[
    \binom{n}{k}p^k(1-p)^{n-k}\ge\frac{e^{-1/6}}{\sqrt{2\pi k}}\exp\left(-nD(k/n\|p)\right).
    \]
\end{proof}